\newtheorem{Theorem}{Theorem}[section]
\newtheorem{Lemma}[Theorem]{Lemma}
\newtheorem{Corollary}[Theorem]{Corollary}
\newtheorem{Proposition}[Theorem]{Proposition}
\newtheorem{Definition}[Theorem]{Definition}
\numberwithin{equation}{section}
\def \dim{{\mbox {dim}}\,}
\def\V{\mbox{Var}}
\def\Z{{\mathbb Z}}
\def\mZ{{\mathbb Z}}
\def\R\re
\def\V\mathbb{V}
\def \la{\lambda}
\def \re{{\mathbb R}}
\def \mR{{\mathbb R}}
\def \C{{\mathbb C}}
\def \V{\mathbb{V}}
\def \la{\lambda}
\newcommand{\cjd}{\rangle}
\newcommand{\cjg}{\langle}
\newcommand{\mc}{\mathcal}
\newcommand{\pl}{\partial}
\newcommand{\id}{\mathrm{Id}}
\newcommand{\abs}[1]{\lvert #1 \rvert}
\newcommand{\norm}[1]{\lVert #1 \rVert}
\newcommand{\eps}{\varepsilon}
\def \vd{\overset{\tt{v}}{\nabla}}
\def \hd{\overset{\tt{h}}{\nabla}}
\def \vdiv{\overset{\tt{v}}{\mbox{\rm div}}}
\def \hdiv{\overset{\tt{h}}{\mbox{\rm div}}}
\newcounter{sidenote}
\begin{document}

\title[The X-ray transform for connections]{The X-ray transform for connections in \mbox{negative curvature}} 

\author[C. Guillarmou]{Colin Guillarmou}
\address{DMA, Ecole Normale Superieure, Paris}
\email{cguillar@dma.ens.fr}

\author[G.P. Paternain]{Gabriel P. Paternain}
\address{ Department of Pure Mathematics and Mathematical Statistics,
University of Cambridge,
Cambridge CB3 0WB, UK}
\email {g.p.paternain@dpmms.cam.ac.uk}

\author[M. Salo]{Mikko Salo}
\address{Department of Mathematics and Statistics, University of Jyv\"askyl\"a}
\email{mikko.j.salo@jyu.fi}

\author[G. Uhlmann]{Gunther Uhlmann}
\address{University of Washington / University of Helsinki / Institute for Advanced Study, Hong Kong University of Science and Technology}
\email{gunther@math.washington.edu}




\begin{abstract}
We consider integral geometry inverse problems for unitary connections and skew-Hermitian Higgs fields on manifolds with negative sectional curvature. The results apply to manifolds in any dimension, with or without boundary, and also in the presence of trapped geodesics. In the boundary case, we show injectivity of the attenuated ray transform on tensor fields with values in a Hermitian bundle (i.e.\ vector valued case). We also show that a connection and Higgs field on a Hermitian bundle 
are determined up to gauge by the knowledge of the parallel transport between boundary points along all possible geodesics. 
The main tools are an energy identity, the Pestov identity with a unitary connection, which is presented in a general form, and a precise analysis of the singularities of solutions of transport equations when there are trapped geodesics. In the case of closed manifolds, we obtain similar results modulo the obstruction given by twisted conformal Killing tensors, and we also study this obstruction.
\end{abstract}

\maketitle

\tableofcontents

\section{Introduction} \label{sec_introduction}

There has been considerable activity recently in the study of integral geometry problems on Riemannian manifolds. Part of the motivation comes from nonlinear inverse problems such as boundary rigidity (inverse kinematic problem), scattering and lens rigidity, or spectral rigidity. It turns out that in many cases, there is an underlying linear inverse problem that is related to inverting a geodesic ray transform, i.e.\ to determining a function or a tensor field from its integrals over geodesics. We refer to the survey \cite{PSU4} for some of the recent developments in this direction.

One of the main approaches for studying geodesic ray transforms is based on energy estimates, often coming in the form of a Pestov identity. This approach originates in \cite{Mu} and has been developed by several authors, see for instance \cite{PS, Sh, PSU4}. A simple derivation of the basic Pestov identity in two dimensions was given in \cite{PSU1}. There it was also observed that the Pestov identity may become even more powerful when a suitable connection is included. This fact was used in \cite{PSU1} to establish solenoidal injectivity of the geodesic ray transform on tensors of any order on compact simple surfaces, and it was also used earlier in \cite{PSU2} to study the attenuated ray transform with connection and Higgs field on compact simple surfaces.

The results of \cite{PSU2, PSU1} were restricted to two-dimensional manifolds. In the preprint \cite{PSU_hd} much of the technology was extended to manifolds of any dimension, including a version of the Pestov identity which looks very similar to the two-dimensional one in \cite{PSU1}. However, the arguments of \cite{PSU_hd} do not consider the case of connections.

The main aim of this paper is to generalize the setup of \cite{PSU_hd} to the case where connections and Higgs fields are present. We will state a version of the Pestov identity with a unitary connection that is valid in any dimension $d \geq 2$ (similar identities have appeared before, see \cite{Sha_connection,V}). This will have several applications in integral geometry problems. We will mostly work on manifolds with negative sectional curvature, which will be sufficient for the integral geometry results. In the boundary case, we also invoke the microlocal methods of \cite{G14b, DG14} that allow to treat negatively curved manifolds with trapped geodesics. In this paper we do not employ the new local method introduced in \cite{UhlmannVasy}, which might be effective in the boundary case when $d \geq 3$ if the method could be adapted to the present setting.

\subsection{Main results in the boundary case}
 
Let $(M,g)$ be a compact connected oriented Riemannian manifold with smooth boundary and with dimension $\dim(M) = d \geq 2$. In this paper we will consider manifolds $(M,g)$ with strictly convex boundary, meaning that the second fundamental form of $\partial M \subset M$ is positive definite.
Let $SM = \{ (x,v) \in TM \,;\, \abs{v} = 1 \}$ be the unit sphere bundle with boundary $\partial(SM)$ and projection $\pi:SM\to M$, and write 
$$
\partial_{\pm}(SM) = \{ (x,v) \in \partial(SM) \,;\, \mp \langle v, \nu \rangle > 0 \}
$$
where $\nu$ is the the inner unit normal vector. Note that the sign convention for $\nu$ and $\partial_\pm(SM)$ are opposite to \cite{PSU_hd}.

We denote by $\varphi_t$ the geodesic flow on $SM$ and by $X$ the geodesic vector field on $SM$, so that $X$ acts on smooth functions on $SM$ by 
$$
Xu(x,v) = \frac{\partial}{\partial t} u(\varphi_t(x,v))\Big|_{t=0}.
$$
If $(x,v) \in SM$ denote by $\ell_+(x,v)\in [0,\infty]$ the first time when the geodesic starting at $(x,v)$ exits $M$ in forward time (we write $\ell_+=\infty$ if the geodesic does not exit $M$). We will also write $\ell_-(x,v):=-\ell_+(x,-v)\leq 0$ for the exit time in backward time. We define the incoming ($-$) and outgoing ($+$) tails 
\[ \Gamma_\mp:=\{ (x,v)\in SM \,;\, \ell_\pm(x,v)=\pm \infty\}.\]
When the curvature of $g$ is negative, the set $\Gamma_+\cup \Gamma_-$ has zero Liouville measure (see section \ref{Geosetup}), and similarly $\Gamma_\pm \cap \pl(SM)$ has zero measure for any measure of Lebesgue type on 
$\pl(SM)$.

We recall certain classes of manifolds that often appear in integral geometry problems. A compact manifold $(M,g)$ with strictly convex boundary is called 
\begin{itemize}
\item 
\emph{simple} if it is simply connected and has no conjugate points, and 
\item 
\emph{nontrapping} if $\Gamma_+\cup \Gamma_-=\emptyset$. 
\end{itemize}
For compact simply connected manifolds with strictly convex boundary, we have 
\[
\text{negative sectional curvature} \implies \text{simple} \implies \text{nontrapping}.
\]
Also, any compact nontrapping manifold with strictly convex boundary is contractible and hence simply connected (see \cite[Proposition 2.4]{PSU1}).

In this paper we will deal with negatively curved manifolds that are not necessarily simply connected and may have trapped geodesics. We briefly give an example in which all our results are new and non-trivial.  We consider a piece of a catenoid, that is, a surface $M=S^{1}\times [-1,1]$ with coordinates $(u,v)$ and metric $ds^2=\cosh^{2} v(du^{2}+dv^{2})$, see Figure \ref{fi1}.

\begin{figure}[h]
\includegraphics[scale=.6]{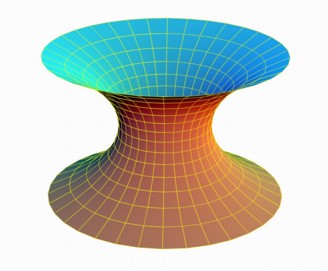}
\caption{A catenoid}
\label{fi1}
\end{figure}

 It is an elementary exercise to check that the boundary is strictly convex and that the surface has negative curvature.  The equations for the geodesics are easily computed: there is a first integral (Clairaut's integral) given
by $\dot{u}\cosh^{2}v=c$ and a second equation of the form $\ddot{v}=\tanh v(\dot{u}^{2}-\dot{v}^{2})$. The curves $t\mapsto (\pm t,0)$
are trapped unit speed closed geodesics  and the union of the tails $\Gamma_{+}\cup \Gamma_{-}$ is determined by the equations $\dot{u}\cosh^2 v=\pm 1$.

\vspace{12pt}

\noindent {\bf X-ray transform.} 
Let $(M,g)$ be a compact manifold with strictly convex boundary, and denote by $M^\circ$ its interior. Given a function $f \in C^{\infty}(SM)$, the \emph{geodesic ray transform} of $f$ is the function $If$ defined by 
$$
If(x,v) = \int_0^{\ell_+(x,v)} f(\varphi_t(x,v)) \,dt, \qquad (x,v) \in \pl_-(SM)\setminus \Gamma_-.
$$
Thus $If$ encodes the integrals of $f$ over all non-trapped geodesics going from $\partial M$ into $M$. 
By \cite[Proposition 4.4]{G14b} (for the existence) and \cite[Lemma 3.3]{G14b} (for the uniqueness), 
when the curvature is negative, there is a unique solution $u\in L^1(SM)\cap C^\infty(SM\setminus \Gamma_-)$ to the transport equation 
$$
Xu = -f \text{ in the distribution sense in }SM^\circ, \quad u|_{\pl_+(SM)} = 0,
$$
and one can define $If$ by 
$$
If = u|_{\pl_-(SM)\setminus \Gamma_-}.
$$

It is not possible to recover a general function $f \in C^{\infty}(SM)$ from the knowledge of $If$. However, in many applications one is interested in the special case where $f$ arises from a symmetric $m$-tensor field on $M$. To discuss this situation it is convenient to consider spherical harmonics expansions in the $v$ variable. For more details on the following facts see \cite{GK2,DS, PSU_hd}. Given any $x \in M$ one can identify $S_x M$ with the sphere $S^{d-1}$. The decomposition 
$$
L^2(S^{d-1}) = \bigoplus_{m=0}^{\infty} H_m(S^{d-1}),
$$
where $H_m(S^{d-1})$ consists of the spherical harmonics of degree $m$, gives rise to a spherical harmonics expansion on $S_x M$. Varying $x$, we obtain an orthogonal decomposition 
$$
L^2(SM) = \bigoplus_{m=0}^{\infty} H_m(SM)
$$
and correspondingly any $f \in L^2(SM)$ has an orthogonal decomposition 
$$
f = \sum_{m=0}^{\infty} f_m.
$$
 We say that a function $f$ has \emph{degree $m$} if $f_k = 0$ for $k \geq m+1$ in this decomposition, 
and we say that $f$ has \emph{finite degree} if it has degree $m$ for some finite $m$. We understand that any $f$ having degree $-1$ is identically zero. 

Solenoidal injectivity of the X-ray transform  can be stated as follows.
\begin{quote}
If $f$ has degree $m$ and $If = 0$, then $f = -Xu$ for some smooth $u$ with degree $m-1$ and $u|_{\partial(SM)} = 0$.
\end{quote}
This has been proved in a number of cases, including the following:
\begin{itemize}
\item 
compact simple manifolds if $m=0$ \cite{Mu} or $m=1$ \cite{AR};
\item 
compact simple manifolds with non-positive curvature if $m \geq 2$ \cite{PS};
\item 
compact simple manifolds if $d=2$ and $m \geq 2$ \cite{PSU1};
\item 
generic compact simple manifolds if $d \geq 3$ and $m = 2$ \cite{SU3};
\item 
manifolds foliated by convex hypersurfaces if $d \geq 3$ and $m \leq 2$ \cite{UhlmannVasy, SUV_tensor};
\item 
compact manifolds with strictly convex boundary and non-positive curvature \cite{G14b}.
\end{itemize}

\vspace{12pt}

\noindent {\bf Attenuated ray transform.} 
Next we discuss the attenuated geodesic ray transform involving a connection and Higgs field. For motivation and further details, we refer to Section \ref{section_attenuated_motivation} and \cite{PSU2,P2}.

Let $(M,g)$ be a compact negatively curved manifold with strictly convex boundary. We will work with vector valued functions and systems of transport equations, and for that purpose it is convenient to use the framework of Hermitian vector bundles. Let $\mc{E}$ be a Hermitian vector bundle over $M$, and let $\nabla$ be a connection on $\mc{E}$. We assume that $\nabla$ is \emph{unitary} (or \emph{Hermitian}), meaning that 
\begin{equation} \label{Hermitian}
Y\cjg u,u'\cjd_{\mc{E}}=\cjg \nabla_Y u,u'\cjd_{\mc{E}}+ \cjg u,\nabla_Y u'\cjd_{\mc{E}}
\end{equation}
for all vector fields $Y$ on $M$ and sections $u,u'\in C^\infty(M;\mc{E})$. Both denominations, unitary and Hermitian, are of common use in the literature and here we will use them indistinctively. Let also $\Phi$ be a \emph{skew-Hermitian Higgs field}, i.e.\ a smooth section $\Phi: M \to {\rm End}_{\rm sk}(\mc{E})$ where ${\rm End}_{\rm sk}(\mc{E})$ is the bundle of skew-Hermitian endomorphisms on $\mc{E}$.

If $SM$ is the unit sphere bundle of $M$, the natural projection $\pi: SM\to M$ gives rise to the pullback bundle $\pi^* \mc{E}$ and pullback connection $\pi^* \nabla$ over $SM$. For convenience we will omit $\pi^*$ and denote the lifted objects by the same letters as downstairs (thus for instance we write $C^{\infty}(M ; \mc{E})$ for the sections of the original bundle $\mc{E}$ over $M$, and $C^{\infty}(SM ; \mc{E})$ for the sections of $\pi^* \mc{E}$). As in the case of functions, we can decompose the space of $L^2$ sections as 
$L^2(SM;\mc{E})=\bigoplus_{m=0}^{\infty} H_m(SM; \mc{E})$; see Section \ref{sec_pestov}.

The geodesic vector field $X$ can be viewed as acting on sections of $\mc{E}$ by 
\begin{equation} \label{defbfX}
 \mathbb{X}u := \nabla_Xu , \quad u\in C^\infty(SM; \mc{E}).
 \end{equation}
If $f \in C^{\infty}(SM; \mc{E})$, the \emph{attenuated ray transform} of $f$ is defined by 
\begin{equation}
I_{\nabla,\Phi} f = u|_{\pl_-(SM)\setminus \Gamma_-}
\label{eq:defat}
\end{equation}
where $u \in L^1(SM; \mc{E})\cap C^\infty(SM\setminus \Gamma_- ;\mc{E})$ is the unique solution of the transport equation (here $M^\circ$ is the interior of $M$) 
$$
(\mathbb{X}+\Phi)u = -f \text{ in the distribution sense in } SM^\circ, \quad u|_{\pl_+(SM)} = 0.
$$
We refer to Proposition \ref{Rpm0} for the proof of the existence and uniqueness of solution. The following theorem proves solenoidal injectivity of the attenuated ray transform (with attenuation given by any unitary connection and skew-Hermitian Higgs field) on any negatively curved manifold with strictly convex boundary.

\begin{Theorem} \label{mainthm_boundary1}
Let $(M,g)$ be a compact manifold with strictly convex boundary and negative sectional curvature, let  $\mc{E}$ be a Hermitian bundle over $M$, and let $\nabla$ be a unitary connection and $\Phi$ a skew-Hermitian Higgs field on $\mc{E}$. If $f \in C^{\infty}(SM;\mc{E})$ has degree $m$ and if the attenuated ray transform of $f$ vanishes (meaning that $I_{\nabla,\Phi} f = 0$), then there exists $u \in C^{\infty}(SM;\mc{E})$ which has degree $m-1$ and 
satisfies 
\[
f = -(\mathbb{X}+\Phi)u, \qquad u|_{\partial(SM)} = 0,
\]
where $\mathbb{X}$ is defined by \eqref{defbfX}.
\end{Theorem}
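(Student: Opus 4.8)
Let $u\in L^1(SM;\mc{E})\cap C^\infty(SM\setminus\Gamma_-;\mc{E})$ be the solution of $(\mathbb{X}+\Phi)u=-f$, $u|_{\pl_+(SM)}=0$, supplied by Proposition~\ref{Rpm0}. By definition $I_{\nabla,\Phi}f=u|_{\pl_-(SM)\setminus\Gamma_-}$, so the hypothesis says precisely that the trace of $u$ on $\pl_-(SM)$ also vanishes; since $\Gamma_\pm\cap\pl(SM)$ has measure zero and $u$ is continuous off $\Gamma_-$, we may regard $u$ as vanishing on all of $\pl(SM)$. The theorem then reduces to proving (i) $u\in C^\infty(SM;\mc{E})$ up to the boundary, and (ii) this smooth $u$ has degree $m-1$; given both, $f=-(\mathbb{X}+\Phi)u$ with $u$ of degree $m-1$ and $u|_{\pl(SM)}=0$, as claimed.

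\textbf{Step 1: global regularity (the main obstacle).} Since $u|_{\pl_-(SM)}=0$ as well, $u$ is at the same time the solution of the transport equation with vanishing incoming trace; by the uniqueness statements of \cite{G14b} this identifies $u$ with the image of $-f$ under the transport resolvents associated to the forward and to the backward flow. Consequently $u$ is smooth away from $\Gamma_-$ \emph{and} away from $\Gamma_+$, hence smooth on $SM\setminus(\Gamma_+\cap\Gamma_-)$, i.e.\ away from the trapped set. Because the curvature is negative the flow is hyperbolic on the trapped set, and the microlocal analysis of \cite{G14b,DG14} — controlling the wavefront set of solutions of the transport equation and propagating singularities along the hyperbolic flow — upgrades this to $u\in C^\infty(SM;\mc{E})$. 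This is the ``precise analysis of the singularities of solutions of transport equations when there are trapped geodesics'' of the introduction, and the catenoid example shows it is genuinely needed; I expect this step to be the technical heart of the argument.

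\textbf{Step 2: degree reduction via the Pestov and energy identities.} With $u$ smooth and $u|_{\pl(SM)}=0$, decompose $u=\sum_{k\ge 0}u_k$ into vertical spherical harmonics and split $\mathbb{X}=\mathbb{X}_++\mathbb{X}_-$ into its degree-raising and degree-lowering parts; $\Phi$ preserves degree. Since $f$ has degree $m$, the degree-$\ell$ component of $(\mathbb{X}+\Phi)u=-f$ vanishes for every $\ell\ge m+1$, giving the closed coupled system
\[
\mathbb{X}_+u_{\ell-1}+\mathbb{X}_-u_{\ell+1}+\Phi u_\ell=0,\qquad \ell\ge m+1.
\]
Now apply the Pestov identity with the unitary connection (in the general form established earlier in the paper) to $u$ — the boundary term drops because $u|_{\pl(SM)}=0$ — together with the elementary energy identity coming from the skew-adjointness of $\mathbb{X}+\Phi$ (valid since $\nabla$ is unitary and $\Phi$ is skew-Hermitian). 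Expanding in spherical harmonics and inserting the relations above, the curvature term — which has the favourable sign because the sectional curvature is negative — dominates the (skew-Hermitian, hence harmless) contributions of the curvature $F_\nabla$ of $\nabla$ and of $\Phi$, and forces all components $u_\ell$ with $\ell\ge m$ to vanish; here the relevant positivity is precisely the statement that a negatively curved manifold with strictly convex boundary carries no nontrivial twisted conformal Killing tensors, the obstruction present in the closed case. Thus $u$ has degree $m-1$, which is (ii), and the proof is complete.

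\textbf{Remark on the difficulty.} In $d=2$ the energy/Pestov part of Step 2 is essentially the computation of \cite{PSU1,PSU2}, and in higher dimensions the point of formulating the Pestov identity in general form is to keep the connection and Higgs contributions under control with the margin provided by negative curvature; an alternative would be an induction on $m$ using holomorphic integrating factors to reduce to the cases $m\le 1$. The genuinely delicate input is Step 1: ordinary energy methods are blind to the trapped directions, so the resolvent estimates and propagation-of-singularities machinery of \cite{G14b,DG14} are indispensable.
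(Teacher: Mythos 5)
Your Step 1 is essentially the paper's approach: the vanishing of $I_{\nabla,\Phi}f$ identifies $u_+ := R_+(0)f$ with $u_- := R_-(0)f$, the wavefront sets ${\rm WF}(u_\pm)\subset E_\mp^*$ intersect only in the zero section, and hence $u\in C^\infty(SM;\mc{E})$ (Corollary~\ref{smoothness}). That part is sound.

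Step 2, however, has a genuine gap. You claim that applying the Pestov identity (with negative curvature supplying positivity) directly ``forces all components $u_\ell$ with $\ell\ge m$ to vanish''. This is not how the Pestov estimate works in the presence of a connection and a Higgs field, and a direct application would fail. The curvature term $-(R\vd^{\mc{E}}u,\vd^{\mc{E}}u)$ dominates the term $-(F^{\mc{E}}u,\vd^{\mc{E}}u)$ only when the vertical frequency is large enough that $\lambda_m\gtrsim \|F^{\mc{E}}\|_{L^\infty}^2/\kappa^2$ (Lemmas~\ref{lemma_pestov_negative1} and~\ref{lemma_beurling_contraction}); for low and intermediate degrees the connection curvature cannot be absorbed, and ``skew-Hermitian, hence harmless'' is not true for the pairing with $\vd^{\mc{E}}u$. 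Likewise the Higgs field is not degree-preserving in a way that makes it disappear: it contributes cross-terms between consecutive degrees, which the paper handles via the auxiliary identity of Lemma~\ref{lemma:aux} and a telescoping estimate (Theorem~\ref{thm:finitedegree_higgs}). The actual proof therefore proceeds in two logically separate stages: first, $u$ has \emph{finite} degree (Theorem~\ref{thm:finitedegree_higgs}), which is all the Pestov identity can give; second, one descends from that finite degree down to $m-1$ using the absence of nontrivial twisted CKTs vanishing on $\partial(SM)$, i.e.\ Theorem~\ref{theorem:ACKT}. That latter result is \emph{not} a Pestov-type energy estimate and has nothing to do with negative curvature: it is a unique-continuation / jet-vanishing statement (proved along the lines of \cite{DS,C}) valid on any Riemannian manifold. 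You attribute its role to ``the relevant positivity'' of the Pestov identity, which misidentifies the mechanism. As it stands, your Step 2 does not close: you would need to separately (a) establish the finite-degree result including the $\Phi$-cross-terms, and (b) invoke the boundary twisted-CKT unique continuation to kill the remaining degrees between $m$ and the finite cutoff.
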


Note in particular that for $m=0$, the above theorem states that any $f \in C^{\infty}(M;\mc{E})$ with $I_{\nabla,\Phi} f = 0$ must be identically zero. The conclusion of Theorem \ref{mainthm_boundary1} is also known for compact simple two-dimensional manifolds (follows by combining the methods of \cite{PSU2} and \cite{PSU1}; this result even for magnetic geodesics may be found in \cite{Ainsworth_paper}). We will use the assumption of strictly negative curvature to deal with large connections and Higgs fields in any dimension.

\vspace{12pt}

\noindent {\bf Parallel transport between boundary points: the X-ray transform for connections and Higgs fields.}
We now discuss a related nonlinear inverse problem, where one tries to determine a connection and Higgs field  on a Hermitian bundle $\mc{E}$ in $(M,g)$ from parallel transport between boundary points. 
This problem largely motivates the present paper; for more details see \cite{PSU2}. Given a compact negatively curved manifold $(M,g)$ with strictly convex boundary, the scattering relation 
\[S_g: \partial_-(SM)\setminus \Gamma_- \to \partial_+(SM)\setminus \Gamma_+,\quad 
 (x,v) \mapsto \varphi_{\ell_+(x,v)}(x,v)\] 
 maps the start point and direction of a geodesic to the end point and direction. 
 If $\mc{E}$ is a Hermitian bundle, $\nabla$ is a unitary connection and $\Phi$ a skew-Hermitian Higgs field, we consider the parallel transport  with respect to $(\nabla,\Phi)$, which is the smooth bundle map 
 $T^{\nabla,\Phi}: \mc{E}|_{\pl_-(SM)\setminus \Gamma_-}\to \mc{E}|_{\pl_+(SM)\setminus \Gamma_+}$ defined by
$T^{\nabla,\Phi}(x,v;e):= F(S_g(x,v))$ where $F$ is a section of $\mc{E}$ over the geodesic 
$\gamma(t)=\pi(\varphi_t(x,v))$ satisfying the ODE
\[ \nabla_{\dot{\gamma}(t)}F(\gamma(t))+\Phi(\gamma(t)) F(\gamma(t))=0, 
\quad F(\gamma(0))=e.\]
The following theorem shows that on compact manifolds with negative curvature and strictly convex boundary, the parallel transport between boundary points determines the pair $(\nabla,\Phi)$ up to the natural gauge equivalence.

\begin{Theorem} \label{mainthm_boundary2}
Let $(M,g)$ be a compact manifold of negative sectional curvature with strictly convex boundary, and 
let $\mc{E}$ be a Hermitian bundle on $M$.
Let $\nabla$ and $\tilde{\nabla}$ be two unitary connections on $\mc{E}$ 
and let $\Phi$ and $\tilde{\Phi}$ be two skew-Hermitian Higgs fields. 
If  the parallel transports agree, i.e. $T^{\nabla,\Phi}=T^{\tilde{\nabla},\tilde{\Phi}}$, 
then there is a smooth section $Q:M \to \mathrm{End}(\mc{E})$ with values in unitary endomorphisms such that $Q|_{\partial M}={\rm Id}$ and 
$\tilde{\nabla}=Q^{-1}\nabla Q$, $\tilde{\Phi}=Q^{-1}\Phi Q$. \end{Theorem}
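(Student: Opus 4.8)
The plan is to reduce Theorem \ref{mainthm_boundary2} to the injectivity statement of Theorem \ref{mainthm_boundary1}, following the strategy familiar from the two-dimensional case in \cite{PSU2}. The natural pairing is to consider the bundle $\mathrm{End}(\mc{E})$, which inherits a unitary connection from $\nabla$ and $\tilde\nabla$: namely, for a section $U$ of $\mathrm{End}(\mc{E})$ one sets $\nabla^{\mathrm{End}}_Y U := \tilde\nabla_Y \circ U - U \circ \nabla_Y$, and similarly one defines a skew-Hermitian Higgs field $\Phi^{\mathrm{End}} U := \tilde\Phi U - U \Phi$ on $\mathrm{End}(\mc{E})$. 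The key observation is that the hypothesis $T^{\nabla,\Phi} = T^{\tilde\nabla,\tilde\Phi}$ says exactly that, for each non-trapped geodesic $\gamma$ from $\partial M$ to $\partial M$, the unique solution of the $\mathrm{End}(\mc{E})$-valued transport equation $(\mathbb{X} + \Phi^{\mathrm{End}})U = 0$ along $\gamma$ with $U|_{\gamma(0)} = \mathrm{Id}$ satisfies $U|_{\gamma(\ell_+)} = \mathrm{Id}$; equivalently, the parallel transport of the product connection on $\mathrm{End}(\mc{E})$ fixes $\mathrm{Id}$ at both endpoints.

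First I would make this precise: define $U$ on $SM \setminus \Gamma_-$ by solving the transport equation $(\mathbb{X}+\Phi^{\mathrm{End}})U = 0$ with $U|_{\partial_+(SM)} = \mathrm{Id}$, using the existence/uniqueness theory (Proposition \ref{Rpm0} applied to the bundle $\mathrm{End}(\mc{E})$ with its induced unitary connection; note $\mathrm{End}(\mc{E})$ carries the Hermitian structure $\br{A,B} = \mathrm{tr}(AB^*)$ and the induced connection is unitary for it). The equal-parallel-transport hypothesis gives $U|_{\partial_-(SM)\setminus\Gamma_-} = \mathrm{Id}$ as well. Then $W := U - \mathrm{Id} \in C^\infty(SM \setminus \Gamma_-; \mathrm{End}(\mc{E})) \cap L^1$ satisfies $(\mathbb{X}+\Phi^{\mathrm{End}})W = -f$ with $f := \Phi^{\mathrm{End}}(\mathrm{Id}) = \tilde\Phi - \Phi \in C^\infty(M;\mathrm{End}_{\mathrm{sk}}(\mc{E}))$ a function of degree $0$, and $W|_{\partial_+(SM)} = 0$; moreover $I_{\nabla^{\mathrm{End}},\Phi^{\mathrm{End}}} f = W|_{\partial_-(SM)\setminus\Gamma_-} = 0$. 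Applying Theorem \ref{mainthm_boundary1} (in the case $m=0$), $f = -(\mathbb{X}+\Phi^{\mathrm{End}})Q$ for some smooth $Q$ of degree $-1$, i.e.\ $Q = \pi^*Q_0$ for a genuine bundle endomorphism $Q_0 : M \to \mathrm{End}(\mc{E})$, with $Q_0|_{\partial M} = \mathrm{Id}$. Unwinding the definition of $\nabla^{\mathrm{End}}$ and $\Phi^{\mathrm{End}}$ and using that $X$ spans the geodesic directions, $(\mathbb{X}+\Phi^{\mathrm{End}})Q = \nabla^{\mathrm{End}}_X Q + \Phi^{\mathrm{End}} Q$, and setting this equal to $-f = \Phi - \tilde\Phi$ and rearranging gives, for every $(x,v)$, $\tilde\nabla_v Q_0 - Q_0 \nabla_v + \tilde\Phi Q_0 - Q_0 \Phi = Q_0 \Phi - \Phi Q_0 \cdot(\text{wait})$—more carefully, one gets at the level of $M$ the two equations $\tilde\nabla Q_0 = Q_0 \nabla$ (i.e.\ $\tilde\nabla = Q_0 \nabla Q_0^{-1}$) and $\tilde\Phi Q_0 = Q_0 \Phi$ once the degree-$0$ and degree-$1$ parts are separated; relabeling $Q := Q_0^{-1}$ yields $\tilde\nabla = Q^{-1}\nabla Q$ and $\tilde\Phi = Q^{-1}\Phi Q$.

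It remains to check that $Q_0$ takes values in unitary endomorphisms and is invertible. For this I would use that both $\nabla$ and $\tilde\nabla$ are unitary and $\Phi,\tilde\Phi$ skew-Hermitian: the parallel transport maps along any geodesic are unitary isomorphisms of the fibers, and $U(x,v)$ being a composition/comparison of two such is itself unitary for each $(x,v)$ on $SM\setminus\Gamma_-$; since $Q_0$ is obtained as (a component of) the limit/restriction of $U$ to a function on $M$, it inherits unitarity, hence invertibility, on the dense open set and then everywhere by continuity. Alternatively, and more robustly, one notes directly that $Q_0^* Q_0$ satisfies $\nabla^{\mathrm{End}}_X(Q_0^*Q_0) + [\text{Higgs terms}] = 0$ with boundary value $\mathrm{Id}$, and a uniqueness argument (or the $m=0$ case of the theorem applied once more to $Q_0^*Q_0 - \mathrm{Id}$) forces $Q_0^*Q_0 = \mathrm{Id}$.

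\textbf{Main obstacle.} The substantive input is Theorem \ref{mainthm_boundary1}, which is already assumed. Beyond that, the only genuine subtlety I anticipate is bookkeeping with the trapped set: $U$ and $W$ are a priori only defined and smooth on $SM\setminus\Gamma_-$ and merely $L^1$ globally, so one must be careful that the hypothesis $T^{\nabla,\Phi}=T^{\tilde\nabla,\tilde\Phi}$ — which is only posited on $\partial_-(SM)\setminus\Gamma_-$ — indeed forces $W|_{\partial_-(SM)\setminus\Gamma_-}=0$ in the sense needed to invoke Theorem \ref{mainthm_boundary1}, and that the resulting $Q$ really has degree $-1$ (equivalently is the pullback of a section on $M$) rather than just degree $0$; this is where the precise statement of Theorem \ref{mainthm_boundary1} with $m=0$, giving $u$ of degree $m-1 = -1$ hence $u|_{\partial(SM)}=0$ and $u$ a pullback, is used crucially. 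The verification that the induced connection and Higgs field on $\mathrm{End}(\mc{E})$ are again unitary/skew-Hermitian is routine and I would state it without detailed computation.
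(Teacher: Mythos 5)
Your approach is essentially the one the paper takes --- reduce to Theorem \ref{mainthm_boundary1} for the endomorphism bundle by noting that the solution $U$ of a transport equation on $\mathrm{End}(\mc{E})$ equals $\mathrm{Id}$ at both ends of every non-trapped geodesic, so that $W = U - \mathrm{Id}$ witnesses vanishing of the attenuated ray transform of an explicit source. A modest difference is that you use the mixed connection $\nabla^{\mathrm{End}} = \tilde{\nabla}\circ(\cdot) - (\cdot)\circ\nabla$ directly, whereas the paper uses the $\nabla$-induced connection on $\mc{F} = \mathrm{End}(\mc{E})$ together with two separate transport solutions $U$, $\tilde{U}$ and a cutoff approximation to justify forming $Q = U\tilde{U}^{-1}$ at the $L^p$ level; your single-equation formulation sidesteps that approximation.

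There is, however, a concrete error in the degree count that breaks the argument as written. For the mixed connection one has $\mathbb{X}\,\mathrm{Id} = \nabla^{\mathrm{End}}_X \mathrm{Id} = (\tilde{\nabla}-\nabla)(v) \neq 0$, a degree-$1$ function of $v$: the identity section is $\nabla^{\mathrm{End}}$-parallel only when the endomorphism connection is built from a single connection, not for the mixed one. Hence $f = (\mathbb{X}+\Phi^{\mathrm{End}})\mathrm{Id} = (\tilde{\nabla}-\nabla)(v) + (\tilde{\Phi}-\Phi)$ has degree $1$, not degree $0$ as you claim (you wrote only the $\Phi^{\mathrm{End}}\mathrm{Id}$ term and dropped $\mathbb{X}\,\mathrm{Id}$). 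Theorem \ref{mainthm_boundary1} must therefore be applied with $m=1$, giving a smooth $u$ of degree $0$ --- i.e.\ a pullback of a section of $\mathrm{End}(\mc{E})$ on $M$ --- vanishing on $\partial(SM)$; then $Q_0 = u + \mathrm{Id}$ produces the gauge. Applying the theorem with $m=0$, as you propose, yields $u$ of degree $-1$, which in the paper's convention means $u\equiv 0$ and hence $f\equiv 0$, forcing $\tilde{\nabla}=\nabla$ and $\tilde{\Phi}=\Phi$ outright --- strictly stronger than the statement allows. (You also invert the convention: degree $0$, not degree $-1$, is the pullback case; degree $-1$ means identically zero.) The dropped degree-$1$ piece $(\tilde{\nabla}-\nabla)(v)$ is precisely what generates the connection half of the gauge identity $\tilde{\nabla}=Q^{-1}\nabla Q$, so the error is not cosmetic. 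With the degree corrected, the remainder of your outline (wellposedness via Proposition \ref{Rpm0}, identification $W=u$ by uniqueness, unitarity of $Q_0$ by density) does go through and matches the paper's route.
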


The map $(\nabla,\Phi)\mapsto T^{\nabla,\Phi}$ is sometimes called the non-abelian Radon transform, or the X-ray transform for a non-abelian connection and Higgs field.

Theorem \ref{mainthm_boundary2} was proved for compact simple surfaces (not necessarily negatively curved) in \cite{PSU2}, and for certain simple manifolds if the connections are $C^1$ close to another connection with small curvature in \cite{Sha_connection}. For domains in the Euclidean plane the theorem was proved in \cite{FU}
assuming that the connections have small curvature and in \cite{E} in general.
For connections which are not compactly supported (but with suitable decay conditions at infinity), \cite{No} establishes local uniqueness of the trivial connection and gives examples in which global uniqueness fails.
The examples are based on a connection between 
the Bogomolny equation in Minkowski $(2+1)$-space and the scattering data $T^{\nabla,\Phi}$ considered above. As it is explained in \cite{W}
(see also \cite[Section 8.2.1]{D}), certain soliton solutions $(\nabla,\Phi)$
have the property that when restricted to space-like planes the scattering data is trivial. In this way one obtains connections in $\mathbb R^2$ with the property
of having trivial scattering data but which are not gauge equivalent to the trivial connection. Of course these pairs are not compactly supported in $\mathbb R^2$ but they have a suitable decay at infinity.


\subsection{Main results in the closed case}

Let now $(M,g)$ be a closed oriented Riemannian manifold of dimension $\dim(M) = d \geq 2$. The geodesic ray transform of a function $f \in C^{\infty}(SM)$ is the function $If$ given by 
$$
If(\gamma) = \int_0^{L(\gamma)} f(\varphi_t(x,v)) \,dt, \qquad \gamma \in \mathcal{G},
$$
where $\mathcal{G}$ is the set of periodic unit speed geodesics on $M$ and $L(\gamma)$ is the length of $\gamma$. Of course it makes sense to consider situations where $(M,g)$ has many periodic geodesics. A standard such setting is the case where $(M,g)$ is Anosov, i.e.\ the geodesic flow of $(M,g)$ is an Anosov flow on $SM$, meaning that there is a continuous flow-invariant splitting 
$$
T(SM) = E_0 \oplus E_s \oplus E_u
$$
where $E_0$ is the flow direction and the stable and unstable bundles $E_s$ and $E_u$ satisfy for all $t > 0$ 
\begin{equation}\label{stable/unstable}
\norm{D \varphi_t|_{E_s}} \leq C \rho^t, \quad  \norm{D \varphi_{-t}|_{E_u}} \leq C \eta^{-t}
\end{equation}
with $C > 0$ and $0 < \rho < 1 < \eta$. Closed manifolds with negative sectional curvature are Anosov \cite{KH}, but there exist Anosov manifolds with large sets of positive curvature \cite{Ebe} and Anosov surfaces embedded in $\mR^3$ \cite{DP}. Anosov manifolds have no conjugate points \cite{K, A, Man} but may have focal points \cite{Gul}.

If $(M,g)$ is closed Anosov and if $f \in C^{\infty}(SM)$ satisfies $If = 0$, the smooth Livsic theorem \cite{dLMM} implies that $Xu = -f$ for some $u \in C^{\infty}(SM)$. The tensor tomography problem for Anosov manifolds can then be stated as follows:

\begin{quote}
Let $(M,g)$ be a closed Anosov manifold. If $f$ has degree $m$ and if $Xu = -f$ for some smooth $u$, show that $u$ has degree $m-1$.
\end{quote}

We wish to consider the same problem where a connection and Higgs field are present. Let $\mc{E}$ be a Hermitian bundle, $\nabla$ be a unitary connection on $\mc{E}$ and $\Phi$ a skew-Hermitian Higgs field. 
Using the decomposition $L^2(SM;\mc{E})=\bigoplus_{m=0}^{\infty} H_m(SM; \mc{E})$ as before,
the operator $\mathbb{X}=\nabla_X$ acts on  $\Omega_m = H_m(SM;\mc{E}) \cap C^{\infty}(SM;\mc{E})$  by 
$$
\mathbb{X} = \mathbb{X}_+ + \mathbb{X}_-
$$
where $\mathbb{X}_{\pm}: \Omega_m \to \Omega_{m \pm 1}$ (see Section \ref{sec_pestov}). The operator $\mathbb{X}_+$ is overdetermined elliptic, and $\mathbb{X}_-$ is of divergence type.

There is a possible obstruction for injectivity of the attenuated ray transform: if $u \in \mathrm{Ker}(\mathbb{X}_+) \cap \Omega_{m+1}$ and $u \neq 0$, then setting $f = -\mathbb{X}_- u$ we have $\mathbb{X}u = -f$ where $f$ has degree $m$ but $u$ has degree $m+1$. Thus the analogue of Theorem \ref{mainthm_boundary1} for closed manifolds can only hold if $\mathrm{Ker}(\mathbb{X}_+)$ is trivial. We call elements in the kernel of $\mathbb{X}_+|_{\Omega_m}$  \emph{twisted Conformal Killing Tensors} (CKTs in short) of degree $m$. We say that there are no nontrivial twisted CKTs if $\mathrm{Ker}(\mathbb{X}_+|_{\Omega_m}) = \{0\}$ for all $m \geq 1$. The dimension of $\mathrm{Ker}(\mathbb{X}_+|_{\Omega_m})$ is a conformal invariant (see Section \ref{sec_pestov}). In the case of the trivial line bundle with flat connection, twisted CKTs coincide with the usual CKTs, and these cannot exist on any manifold whose conformal class contains a metric with negative sectional curvature or a rank one metric with nonpositive sectional curvature \cite{DS, PSU_hd}.

The following result proves solenoidal injectivity of the attenuated ray transform on closed negatively curved manifolds with no nontrivial twisted CKTs, and also gives a substitute finite degree result if twisted CKTs exist.

\begin{Theorem} \label{main_thm_closed_finitedegree}
Let $(M,g)$ be a closed manifold with negative sectional curvature, let $\mc{E}$ be a Hermitian bundle, 
and let $\nabla$ be a unitary connection and $\Phi$ a skew-Hermitian Higgs field on $\mc{E}$. 
If $f \in C^{\infty}(SM;\mc{E})$ has finite degree, and if $u \in C^{\infty}(SM;\mc{E})$ solves the equation 
$$
(\mathbb{X}+\Phi)u = -f \text{ in } SM,
$$
then $u$ has finite degree. If in addition there are no twisted CKTs,
and $f$ has degree $m$, then $u$ has degree $\max\{m-1,0\}$ 
(and $u \in \mathrm{Ker}(\mathbb{X}_+|_{\Omega_0})$ if $m=0$).
\end{Theorem}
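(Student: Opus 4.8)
The plan is to run a Pestov-type energy argument in the closed Anosov setting, following the strategy that in the boundary case underlies Theorem~\ref{mainthm_boundary1}, but now exploiting the regularity of $u$ (which is given, via the smooth Livsic theorem in the $\mc{E}$-twisted version) together with the spherical harmonics decomposition $u=\sum_m u_m$. First I would absorb the Higgs field: since $\Phi$ maps $\Omega_m\to\Omega_m$ (it is a bundle endomorphism, degree-preserving), the equation $(\mathbb{X}+\Phi)u=-f$ written componentwise becomes a coupled system $\mathbb{X}_+ u_{m-1} + \mathbb{X}_- u_{m+1} + \Phi u_m = -f_m$. The key point is that for $m$ large (bigger than the degree of $f$) this says $\mathbb{X}_+ u_{m-1} = -\mathbb{X}_- u_{m+1} - \Phi u_m$, i.e.\ the "top part" of $u$ is governed by $\mathbb{X}_\pm$ alone with $\Phi$ entering only as a zeroth order coupling between adjacent degrees.

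The heart of the matter is the Pestov identity with a unitary connection (stated in general form in Section~\ref{sec_pestov} of the paper), which on a negatively curved closed manifold gives an inequality of the shape
\[
\norm{\vd \mathbb{X} w}^2 \;\geq\; (\text{positive curvature term})\,\norm{\vd w}^2 + \norm{\mathbb{X}\vd w}^2 - (\text{lower order in } w, Fw)
\]
for $w\in C^\infty(SM;\mc{E})$, where the curvature term is strictly positive because the sectional curvature is bounded above by a negative constant, and where $F$ denotes the curvature of $\nabla$ together with $d^\nabla\Phi$. Applying this to the tail $w=\sum_{k\geq N} u_k$ for $N$ large, the commutator structure of $\mathbb{X}_\pm$ with the degree grading, combined with the fact that $\mathbb{X}w$ has controlled degree (essentially supported in degrees $\geq N-1$ and, modulo $f$, built from $\mathbb{X}_+$ and the bounded operators $\mathbb{X}_-$, $\Phi$), should force $\norm{\vd w}$ to be controlled by $\norm{w}$ with a gain; iterating in $N$, or equivalently running the standard degree-shifting argument (as in \cite{PSU_hd} for the untwisted high-dimensional case), yields $u_k=0$ for $k$ sufficiently large, which is the "finite degree" conclusion. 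Here I would need to be careful that $\Phi$, being only a zeroth order term, does not destroy the positivity: the negative curvature upper bound must beat the operator norm of $\Phi$ and of the curvature of $\nabla$, and this is exactly where the hypothesis of \emph{strictly} negative curvature (as opposed to merely nonpositive, or Anosov) is used — large connections and Higgs fields are allowed precisely because the curvature term is quantitatively positive.

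Once $u$ is known to have finite degree, say degree $M_0$, I would run the argument downward: the top component $u_{M_0}$ satisfies $\mathbb{X}_+ u_{M_0}=0$ (looking at the degree $M_0+1$ equation, whose right side $f_{M_0+1}$ vanishes once $M_0+1>m$), so $u_{M_0}$ is a twisted CKT of degree $M_0$; if there are no nontrivial twisted CKTs this forces $u_{M_0}=0$, and one repeats, peeling off degrees one at a time — each $u_k$ for $k>\max\{m-1,0\}$ becomes a twisted CKT by the same reasoning with the lower-degree equations and hence vanishes — until one reaches degree $\max\{m-1,0\}$, with the stated exception that when $m=0$ the bottom component $u_0$ need only satisfy $\mathbb{X}_+ u_0 = -f_0 + (\text{terms that vanish})$, wait — rather, when $m=0$ one gets $u$ of degree $0$ with $\mathbb{X}_+u_0=0$ from the degree-$1$ equation, i.e.\ $u_0\in\mathrm{Ker}(\mathbb{X}_+|_{\Omega_0})$. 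I expect the main obstacle to be the first part: making the Pestov inequality genuinely yield the finite-degree conclusion in the presence of trapped geodesics is delicate because on a closed Anosov manifold there is no boundary to integrate against and one cannot simply invoke ellipticity of $\mathbb{X}_+$ globally; the clean way around this is to combine the Pestov identity with the a priori smoothness of $u$ and an induction on degree that only ever uses the identity on finitely many Fourier modes at a time, so that the "trapping" never actually enters — the smooth Livsic theorem has already done that work.
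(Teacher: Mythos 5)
Your outline of the second half (peeling off $\mathbb{X}_+ u_l = 0$ one degree at a time once finite degree is established, with $\mathbb{X}u_0 = 0$ when $m=0$) matches the paper's Theorem~\ref{thm_finitedegree_notwistedckts} exactly. The gap is in the first half: the tail-Pestov argument you sketch does not handle a Higgs field of arbitrary size.

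Concretely: the Pestov identity of Proposition~\ref{prop_pestov} is for $\mathbb{X}$, not $\mathbb{X}+\Phi$, so to apply your tail estimate to $w = T_{\geq N} u$ you must substitute $\mathbb{X}u = -f-\Phi u$. Since $\Phi$ is degree-preserving and $f$ drops out for $N$ large, this gives $T_{\geq N+1}\mathbb{X}u = -\Phi\, T_{\geq N+1}u$, and because $\vd(\Phi u) = \Phi\,\vd u$, the key inequality of Lemma~\ref{lemma_pestov_negative1} becomes
\[
\tfrac{\kappa}{4}\,\norm{\vd\,^{\mc{E}} T_{\geq N} u}^2 \;\leq\; \norm{\Phi\,\vd\,^{\mc{E}} T_{\geq N+1} u}^2 \;\leq\; \norm{\Phi}_{L^\infty}^2 \norm{\vd\,^{\mc{E}} T_{\geq N} u}^2.
\]
Both sides scale the same way in $N$, so this forces $T_{\geq N} u = 0$ only under the smallness condition $\norm{\Phi}_{L^\infty}^2 < \kappa/4$. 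You remark yourself that the curvature must \emph{beat} the norm of $\Phi$, but the whole point of the theorem is that there is no such restriction; the $F^{\mc{E}}$ term is of lower order in $N$ and is indeed absorbed as $N\to\infty$, whereas the $\Phi$ term is of the same order, so the growth of $\kappa\lambda_N$ does not help against $\Phi$. This is why the paper abandons the tail-Pestov proof in the Higgs case and switches to the second, two-term recursion strategy of \cite{P1}: Lemma~\ref{lemma_beurling_contraction} gives $\norm{\mathbb{X}_-u_m}^2 + c_m\norm{u_m}^2 \leq d_m\norm{\mathbb{X}_+u_m}^2$ with $c_m\to\infty$, and the crucial auxiliary Lemma~\ref{lemma:aux} exploits the skew-Hermitian structure of $\Phi$ to produce a telescoping cancellation
\[
(\mathbb{X}_+u_{m-1},\Phi u_m) + \overline{(\mathbb{X}_+u_{m-2},\Phi u_{m-1})} = -(u_{m-1},i^-_{\mathbb{X}\Phi}u_m) - \norm{\Phi u_{m-1}}^2
\]
when one adds the inequalities for degrees $m$ and $m-1$. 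Only this cancellation, not the raw positivity of the curvature term, lets one take $\Phi$ arbitrary. Your proposal would need to be reworked along these lines (or you would need to establish a genuine Pestov identity for $\mathbb{X}+\Phi$ with favorable signs, which is nontrivial in dimension $d\geq 3$).

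A smaller point: the smooth Livsic theorem is not needed here, since the theorem already hypothesizes a smooth solution $u$; Livsic is what connects the vanishing of the transform to the existence of $u$, but that reduction is outside this statement.
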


We conclude with a few results on twisted CKTs. The situation is quite simple on manifolds with boundary: any twisted CKT that vanishes on part of the boundary must be identically zero. The next theorem extends \cite{DS} which considered the case of a trivial line bundle with flat connection. This result will be used as a component in the proof of Theorem \ref{mainthm_boundary1} (for $\Gamma=\partial M$ and $\pi^{-1}\Gamma=\partial(SM)$).

\begin{Theorem} \label{main_thm_twistedckts_boundary}
Let $(M,g)$ be a compact Riemannian manifold, let $\mc{E}$ be a Hermitian bundle, and let $\nabla$ be a unitary connection on $\mc{E}$. If $\Gamma$ is a hypersurface of $M$ and for some $m \geq 0$ one has 
$$
\mathbb{X}_+ u = 0 \text{ in }SM, \ \ u \in \Omega_m, \ \ u|_{\pi^{-1}\Gamma} = 0,
$$
then $u = 0$.
\end{Theorem}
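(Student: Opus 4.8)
My plan follows the strategy of \cite{DS}, adapted to incorporate the connection. The first step is to translate the statement to the base manifold. An element $u\in\Omega_m=H_m(SM;\mc{E})\cap C^\infty(SM;\mc{E})$ corresponds to a trace-free symmetric $m$-tensor field $f$ on $M$ with values in $\mc{E}$, via $u(x,v)=f_x(v,\dots,v)$, and by polarization the hypothesis $u|_{\pi^{-1}\Gamma}=0$ is equivalent to $f|_\Gamma=0$. Under the same identification $\mathbb{X}_+u=0$ becomes the \emph{twisted conformal Killing equation} $\mathcal{T}(Df)=0$, where $D$ is the symmetrized covariant derivative built from the Levi-Civita connection and $\nabla$, and $\mathcal{T}$ is the orthogonal projection onto trace-free symmetric tensors. (For $m=0$ this just says that $f$ is a $\nabla$-parallel section.) So it suffices to prove: a solution $f$ of $\mathcal{T}(Df)=0$ that vanishes on a hypersurface vanishes identically.

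The second step is to show that $f$ vanishes to infinite order along $\Gamma$. Fix $p\in\Gamma$ and choose boundary normal coordinates $(x',x_d)$ near $p$ with $\Gamma=\{x_d=0\}$. Since $f|_\Gamma=0$, every derivative of $f$ in the tangential variables $x'$ vanishes on $\Gamma$. Because $\mathbb{X}_+$ is overdetermined elliptic, i.e.\ its principal symbol is injective in each codirection, the principal part of $\mathcal{T}(Df)=0$ in the conormal direction $dx_d$ has a left inverse; hence the equation expresses the normal derivative $\partial_{x_d}f$ in terms of tangential derivatives of $f$ and of $f$ itself, through a relation with smooth coefficients. Evaluating on $\Gamma$ gives $\partial_{x_d}f|_\Gamma=0$, and differentiating the equation in $x_d$ and iterating yields $\partial_{x_d}^k f|_\Gamma=0$ for every $k$. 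Consequently all covariant derivatives $\nabla^j f$ vanish at every point of $\Gamma$.

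The last step is a prolongation argument: the twisted conformal Killing equation is of \emph{finite type}. Exactly as in the untwisted case treated in \cite{DS}, there is an integer $N=N(m,d)$, a vector bundle $\mathcal{W}$ over $M$ whose fiber records $(f,\nabla f,\dots,\nabla^N f)$, and a smooth linear bundle map $\mathcal{F}$ such that every solution satisfies $\nabla\bigl((f,\dots,\nabla^N f)\bigr)=\mathcal{F}\bigl((f,\dots,\nabla^N f)\bigr)$; the only modification in our setting is that commuting covariant derivatives on $\mc{E}$-valued tensors now also produces the curvature $F^\nabla$ of $\nabla$, a zeroth order term that does not affect the order at which the prolongation closes. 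Along any curve in $M$ this is a linear first-order ODE for the prolonged section, so it vanishes on the connected component of $M$ meeting $\Gamma$ once it vanishes at one point of $\Gamma$, which is guaranteed by Step 2. Hence $f\equiv0$ and $u=0$. I expect the only real work to be the verification that the prolongation still closes and has smooth coefficients in the twisted setting; the tensor dictionary and the flatness argument of Step 2 are routine once overdetermined ellipticity of $\mathbb{X}_+$ is in hand, and one should of course take $M$ connected (otherwise one concludes vanishing only on the component of $\Gamma$).
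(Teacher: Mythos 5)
Your proposal is correct and follows essentially the same two-step strategy as the paper: (i) show $u$ vanishes to infinite order along $\pi^{-1}\Gamma$, and (ii) use the finite-type (prolongation) property of the twisted conformal Killing operator to propagate this to $u\equiv 0$. The paper delegates step (i) to a direct adaptation of the coordinate computation in \cite[Lemma 4.1]{DS} (observing that the new term $\mc{S}(A\otimes u)$ dies under $\partial_y^k|_{y=0}$ by the induction hypothesis) and step (ii) to \cite[Theorem 3.6]{C}, which gives finite type for any operator sharing the principal symbol of $\mc{P}D$; your phrasing of step (i) via a left inverse of the symbol in the conormal direction is a cleaner abstract formulation of the same induction, and your caveat that one must check the prolongation still closes in the twisted setting is precisely what the citation of Čap's theorem handles.
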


We next discuss the case of closed two-dimensional manifolds. If $(M,g)$ is a closed Riemannian surface with genus $0$ or $1$, then nontrivial CKTs exist even for the flat connection on the trivial line bundle (consider conformal Killing vector fields on the sphere or flat torus). The next result considers surfaces with genus $\geq 2$, and gives a condition for the connection ensuring the absence of nontrivial twisted CKTs. The proof is based on a Carleman estimate.

To state the condition, note that if $\mc{E}$ is a Hermitian vector bundle of rank $n$ and $\nabla$ is a unitary connection on $\mc{E}$, then the curvature $f^{\mc{E}}$ of $\nabla$ is a $2$-form with values in skew-Hermitian endomorphisms of $\mc{E}$. In a trivializing neighborhood $U \subset M$, $\nabla$ may be represented as $d+A$ where $A$ is an $n \times n$ matrix of $1$-forms, and the curvature is represented as $dA + A \wedge A$, an $n \times n$ matrix of $2$-forms. If $d=2$ and if $\star$ is the Hodge star operator, then $i \star f^{\mc{E}}$ is a smooth section on $M$ with values in Hermitian endomorphisms of $\mc{E}$, and it has real eigenvalues $\lambda_1 \leq \ldots \leq\lambda_n$ counted with multiplicity. Each $\lambda_j$ is a Lipschitz continuous function $M \to \mR$. Below $\chi(M)$ is the Euler characteristic of $M$.

\begin{Theorem} \label{main_thm_twistedckts_twodim}
Let $(M,g)$ be a closed Riemannian surface, let $\mc{E}$ be a Hermitian vector bundle of rank $n$ over $M$, and let $\nabla$ be a unitary connection on $\mc{E}$. Denote by $\lambda_1 \leq \ldots \leq \lambda_n$ the eigenvalues of $i \star f^{\mc{E}}$ counted with multiplicity. If $m \geq 1$ and if 
$$
2\pi m \chi(M) < \int_M \lambda_1 \,dV \ \ \text{ and } \quad \int_M \lambda_n \,dV < -2\pi m \chi(M),
$$
then any $u \in \Omega_m$ satisfying $\mathbb{X}_+ u = 0$ must be identically zero.
\end{Theorem}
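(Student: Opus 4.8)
The plan is to pass to the fiberwise Fourier decomposition in the velocity variable, reduce the statement to a vanishing theorem for a twisted Cauchy--Riemann operator on $M$, and then prove that vanishing by a Carleman estimate --- equivalently, a Bochner identity in a conformally rescaled Hermitian metric --- whose weight is arranged so that only the extreme eigenvalue of $i\star f^{\mc{E}}$ enters.

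\emph{Reduction to one Fourier mode.} On a surface the circle action on the fibers of $SM$ decomposes $L^2(SM;\mc{E})$ into fiberwise Fourier modes, and for $m\geq1$ the space $\Omega_m$ is the sum of the modes of degrees $+m$ and $-m$. Since $\mathbb{X}_+=\nabla_X$ intertwines with the circle action, $\mathbb{X}_+u=0$ for $u=u_m+u_{-m}\in\Omega_m$ forces $\mathbb{X}_+u_m=0$ and $\mathbb{X}_+u_{-m}=0$ separately. Because $\dim M=2$ and $\nabla$ is unitary, the $(0,1)$--part of $\nabla$ makes $\mc{E}$ into a holomorphic bundle, and the classical identification of fiberwise Fourier modes with powers of the canonical bundle $\kappa$ identifies $\mathbb{X}_+$ on the degree $+m$ mode with a $\bar\partial$--operator; thus $\mathbb{X}_+u_m=0$ says exactly that $u_m$ is a holomorphic section of $\kappa^{-m}\otimes\mc{E}$ (for the orientation making $\mathbb{X}_+$ the $\bar\partial$--operator; with the opposite convention one conjugates throughout). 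Complex conjugation sends the degree $-m$ mode to the degree $+m$ mode and replaces $(\mc{E},\nabla)$ by its conjugate $(\overline{\mc{E}},\overline{\nabla})$, again unitary. Hence it suffices to prove the following: if $(\mc{F},\nabla)$ is a Hermitian bundle with unitary connection and $\Lambda$ denotes the largest eigenvalue of $i\star f^{\mc{F}}$, then every holomorphic section of $\kappa^{-m}\otimes\mc{F}$ vanishes provided $\int_M\Lambda\,dV<-2\pi m\chi(M)$. Applied with $\mc{F}=\mc{E}$ (so $\Lambda=\lambda_n$) this is the second hypothesis and kills $u_m$; applied with $\mc{F}=\overline{\mc{E}}$ (so $\Lambda=-\lambda_1$) it is the first hypothesis and kills $u_{-m}$.

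\emph{The Carleman estimate.} Give $\kappa$ the metric induced by $g$ and $\mc{F}$ the conformally rescaled metric $e^{-2\varphi}h_{\mc{F}}$ for a function $\varphi\in C^\infty(M)$ to be determined. A Bochner--Kodaira identity on the Riemann surface $M$ for $\bar\partial$ acting on sections $k$ of $\kappa^{-m}\otimes\mc{F}$ reads, schematically,
\[
\|\bar\partial k\|_{L^2}^2=\|\partial k\|_{L^2}^2-\int_M\big\langle \Xi_\varphi\,k,k\big\rangle\,dV,\qquad \Xi_\varphi:=i\star f^{\mc{F}}+\big(mK+\Delta_g\varphi\big)\,\mathrm{Id},
\]
where $K$ is the Gauss curvature and $\Xi_\varphi$ is, up to the Hodge star, the curvature of the rescaled Chern connection on $\kappa^{-m}\otimes\mc{F}$. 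A holomorphic section $h$ has $\bar\partial h=0$, so $\int_M\langle\Xi_\varphi h,h\rangle\,dV=\|\partial h\|_{L^2}^2\geq0$; hence $h\equiv0$ follows once the Hermitian endomorphism $\Xi_\varphi$ is negative definite, i.e.\ once its largest eigenvalue $\Lambda+mK+\Delta_g\varphi$ is negative. By Gauss--Bonnet $\int_M(mK+\Lambda)\,dV=2\pi m\chi(M)+\int_M\Lambda\,dV<0$ by hypothesis, so one may solve $\Delta_g\varphi=\overline{(mK+\Lambda)}-(mK+\Lambda)$, the right side having zero mean (here $\overline{(\,\cdot\,)}$ is the average over $M$); then $\Xi_\varphi=\big(i\star f^{\mc{F}}-\Lambda\,\mathrm{Id}\big)+\overline{(mK+\Lambda)}\,\mathrm{Id}\leq\overline{(mK+\Lambda)}\,\mathrm{Id}<0$, and the identity forces $h=0$. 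The weight is $e^{-\varphi}$ and no large parameter is needed; in the rank one flat case this recovers the absence of conformal Killing tensors on surfaces of genus $\geq2$.

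\emph{The main obstacle.} The delicate point, and the reason the hypothesis involves $\lambda_1,\lambda_n$ rather than the average $\tfrac1n\sum_j\lambda_j$, is the higher rank case: a crude energy identity on $SM$ controls only $\mathrm{tr}(i\star f^{\mc{E}})$ (equivalently $\deg\mc{E}$), which is too weak, so the weighted Bochner identity must be combined with the pointwise bound $i\star f^{\mc{F}}\leq\Lambda\,\mathrm{Id}$ so that only the extreme eigenvalue survives in the positivity requirement. Geometrically, a nonzero holomorphic section of $\kappa^{-m}\otimes\mc{F}$ spans a holomorphic line subbundle $L$ with $\deg L\geq0$, while $L\otimes\kappa^{m}\subset\mc{F}$ together with the Gauss--Codazzi (curvature--decreasing) inequality force $\deg L\leq m\chi(M)+\tfrac1{2\pi}\int_M\Lambda\,dV<0$, a contradiction; the Carleman estimate is the analytic incarnation of this. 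A secondary point is bookkeeping: getting the threshold $2\pi m\chi(M)$ exactly right requires tracking the normalizations in the Fourier-mode / canonical-bundle identification and in the Bochner identity.
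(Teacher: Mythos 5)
Your proposal is correct and in essence the same argument as the paper's: decompose $\Omega_m$ into the Fourier modes $\Lambda_{\pm m}$, note $\mathbb{X}_+u=0$ gives $\mu_+u_m=0$ and $\mu_-u_{-m}=0$ separately, and then kill each piece by a weighted $L^2$ estimate whose weight $\varphi$ is produced by a Poisson equation made solvable by Gauss--Bonnet. What you call the Bochner--Kodaira identity for $\bar\partial$ on $\kappa^{-m}\otimes\mc{F}$ is exactly the paper's computation $\norm{Pw}^2=\norm{P^*w}^2+([P^*,P]w,w)$ with $P=e^{-\varphi}\mu_+e^{\varphi}$ together with the commutator formula $[\mu_+,\mu_-]=\tfrac{i}{2}(K\mathbb{V}+\star f^{\mc{E}})$, so the holomorphic-bundle language is a reformulation rather than a different route. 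Three small remarks. First, you leave the Bochner identity ``schematic'' and acknowledge the normalization bookkeeping as an open point, whereas the paper derives $[P^*,P]w=\tfrac12(-\Delta_g\varphi-mK+i\star f^{\mc{E}})w$ explicitly on $\Lambda_m$; since the threshold $2\pi m\chi(M)$ depends on those constants, this step should be carried out. Second, $\lambda_n$ (resp. $\lambda_1$) is only Lipschitz, so solving $\Delta_g\varphi=\overline{(mK+\Lambda)}-(mK+\Lambda)$ directly gives $\varphi\in C^{2,\alpha}$ but not $C^\infty$; the paper sidesteps this by first choosing a smooth $f$ with $\|f-(-\lambda_1)\|_{L^\infty}$ small enough and then solving $-\Delta_g\varphi=mK+f$, and you should either do the same or check that $C^{2,\alpha}$ regularity suffices for the identity. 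Third, the closing sketch via $\deg L\geq 0$ for the line subbundle spanned by a holomorphic section, combined with the curvature-decreasing (Gauss--Codazzi) bound $\deg(L\otimes\kappa^m)\leq\tfrac{1}{2\pi}\int_M\Lambda\,dV$, is a genuinely different and pleasantly algebraic proof that the paper does not give; it would be worth writing out as a standalone argument. Also a minor slip: $\mathbb{X}_+$ is not $\nabla_X$; rather $\mathbb{X}=\nabla_X$ and $\mathbb{X}_\pm$ are its components, and $\mathbb{X}_+$ restricts to $\mu_+$ on $\Lambda_m$ and to $\mu_-$ on $\Lambda_{-m}$ for $m\geq1$.
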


The conditions for $\lambda_1$ and $\lambda_n$ are conformally invariant (they only depend on the complex structure on $M$) and sharp: \cite{Pa2} gives examples of connections on a negatively curved surface for which $\lambda_1= K$ (the Gaussian curvature), so one has $\int_M \lambda_1 \,dV
 = 2\pi \chi(M)$, and these connections admit twisted CKTs of degree $1$. Further examples of nontrivial twisted CKTs on closed negatively curved surfaces are in \cite{P1,P2}.
 
For closed manifolds of dimension $d \geq 3$, our results on absence of twisted CKTs are less precise but we have the following theorem.

\begin{Theorem} \label{thm_main_ckt_hd}
Let $(M,g)$ be a closed manifold whose conformal class contains a negatively curved manifold, let $\mc{E}$ be a Hermitian vector bundle over $M$, and let $\nabla$ be a unitary connection. There is $m_0 \geq 1$ such that $\mathrm{Ker}(\mathbb{X}_+|_{\Omega_m}) = \{0\}$ when $m \geq m_0$ (one can take $m_0=1$ if $\nabla$ has sufficiently small curvature) .
\end{Theorem}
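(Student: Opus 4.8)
The plan is to reduce to a negatively curved representative of the conformal class and then run the energy argument of \cite{PSU_hd}, with the curvature of $\nabla$ entering only as a zeroth order perturbation whose size relative to the degree $m$ is the quantity to be controlled.

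First, recall from Section \ref{sec_pestov} that $\dim\mathrm{Ker}(\mathbb{X}_+|_{\Omega_m})$ depends only on the conformal class of $g$ (and on $\mc{E},\nabla$). Hence it suffices to prove the statement when $g$ itself has sectional curvature bounded above by $-\kappa_0$ for some $\kappa_0>0$. Fix such a metric, let $m\geq 1$, and let $u\in\Omega_m$ satisfy $\mathbb{X}_+u=0$; the goal is to force $u=0$ whenever $m\geq m_0$, with $m_0$ depending only on $(M,g,\mc{E},\nabla)$, and with $m_0=1$ allowed when $\|f^{\mc{E}}\|_{C^0}$ is small.

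Next I would apply the Pestov identity with the unitary connection $\nabla$ of Section \ref{sec_pestov} to $u$. Since $M$ is closed, $SM$ has empty boundary and every integration by parts is boundary free; and since $\mathbb{X}_+u=0$ one has $\mathbb{X}u=\mathbb{X}_-u\in\Omega_{m-1}$, which collapses the terms built from $\mathbb{X}u$. Restricted to $\Omega_m$, the identity then takes the schematic form
$$
0 \;=\; \bigl\| \, \text{(a first order expression in } u) \, \bigr\|_{L^2(SM;\mc{E})}^2 \;-\; \int_{SM} \langle \mathcal{R}\,\xi u, \xi u\rangle \;+\; \int_{SM} \langle \mathcal{F}(f^{\mc{E}})\,\xi u, \xi u\rangle ,
$$
where $\xi u$ is the vertical-gradient type quantity of $u$ occurring in the identity, $\mathcal{R}$ is the curvature operator built from the Riemann tensor of $g$, and $\mathcal{F}(f^{\mc{E}})$ is the pointwise, zeroth order action of the curvature $2$-form of $\nabla$. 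The point is that $-\langle\mathcal{R}\,\xi u,\xi u\rangle$ is pointwise bounded below by a positive combinatorial multiple of $\kappa_0\,|\xi u|^2$ — precisely the favorable sign of negative (rather than merely nonpositive) curvature exploited in the flat case in \cite{PSU_hd} — and after restriction to $\Omega_m$ this contribution grows with $m$, whereas $|\langle\mathcal{F}(f^{\mc{E}})\,\xi u,\xi u\rangle|\le C\|f^{\mc{E}}\|_{C^0}\,|\xi u|^2$ with $C$ of strictly lower order in $m$. Combining these, all three terms above must vanish; since the sectional curvature term dominates once $m$ is large, $\xi u\equiv 0$, and since the vanishing of the vertical gradient of $u\in\Omega_m$ with $m\geq1$ forces $u=0$, we conclude. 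Monotonicity of the two competing quantities in $m$ gives the conclusion for all $m\geq m_0$ from the case $m=m_0$, and the $m=1$ case already works when $\|f^{\mc{E}}\|_{C^0}$ is small enough.

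The main obstacle is the precise bookkeeping of the $m$-dependence. For $d\geq3$ the operator $\mathcal{R}$ acting on vertical gradients is not a scalar (unlike the Gauss curvature when $d=2$), so one must diagonalize, or at least estimate from below, its restriction to degree $m$ spherical harmonics, and likewise bound from above the action of the curvature $2$-form of $\nabla$ on $H_m(S^{d-1})$; the crucial step is verifying the gap in these growth rates, i.e.\ that the curvature of $\nabla$ does not enter at the top order in $m$, so that for $m\geq m_0$ the sectional curvature term genuinely wins. A secondary, routine matter is checking that the general Pestov identity of Section \ref{sec_pestov} specializes correctly on $\Omega_m$ and that closedness of $M$ is used only to discard boundary terms.
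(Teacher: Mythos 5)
Your proposal follows the same route as the paper: conformal reduction to a negatively curved representative, then the Pestov identity restricted to $\Omega_m$, with the $\mathbb{X}_+u=0$ condition killing the right-hand side, the strictly negative sectional curvature producing a coercive term of size $\kappa\lambda_m\|u\|^2$ (with $\lambda_m=m(m+d-2)$), and the connection curvature entering as a lower-order perturbation. In the paper this is packaged as the Beurling-type inequality of Lemma \ref{lemma_beurling_contraction}, whose proof is exactly this computation.

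One correction is needed, though, precisely at the point you flag as the ``crucial step.'' In the Pestov identity (Proposition \ref{prop_pestov_omegam}) the connection-curvature term is $(F^{\mc{E}}u,\vd^{\mc{E}}u)$: $F^{\mc{E}}$ acts on $u$ itself, not on the vertical gradient $\vd^{\mc{E}}u$. Your schematic form $\int\langle\mathcal{F}(f^{\mc{E}})\,\xi u,\xi u\rangle$ with bound $C\|f^{\mc{E}}\|_{C^0}|\xi u|^2$ is the same order in $m$ as the Riemann curvature term $\kappa\|\vd^{\mc{E}}u\|^2$, so taken at face value it would produce no gap at all, and the argument would stall. The gap comes from the asymmetry of the pairing: since $\|\vd^{\mc{E}}u\|^2=\lambda_m\|u\|^2$ on $\Omega_m$, one has
\[
|(F^{\mc{E}}u,\vd^{\mc{E}}u)|\le\|F^{\mc{E}}\|_{L^\infty}\|u\|\,\|\vd^{\mc{E}}u\|=\|F^{\mc{E}}\|_{L^\infty}\lambda_m^{-1/2}\|\vd^{\mc{E}}u\|^2 ,
\]
which is $O(\lambda_m^{-1/2})$ relative to the Riemann term. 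Once this is in place, the condition $\lambda_m\ge 4\|F^{\mc{E}}\|_{L^\infty}^2/\kappa^2$ makes the negative-curvature term dominate, and no diagonalization of $R$ acting on spherical harmonics is needed; the pointwise bound $-\langle R\vd^{\mc{E}}u,\vd^{\mc{E}}u\rangle\ge\kappa|\vd^{\mc{E}}u|^2$ is immediate from the definition of sectional curvature. With that adjustment your argument matches the paper's.
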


We also obtain a result regarding transparent pairs, that is, connections and Higgs fields for which the parallel transport along periodic geodesics coincides with the parallel transport for the flat connection. This closed manifold analogue of Theorem \ref{mainthm_boundary2} is discussed in Section \ref{sec_transparent_pairs}.

\vspace{12pt}

\noindent{\bf Open questions.}
Here are some open questions related to the topics of this paper:
\begin{itemize}
\item 
Does Theorem \ref{mainthm_boundary1} hold for compact simple manifolds when $d \geq 3$, or for manifolds satisfying the foliation condition in \cite{UhlmannVasy}? The result is known for compact simple two-dimensional manifolds \cite{PSU2,PSU1,Ainsworth_paper}.
\item 
Does Theorem \ref{main_thm_closed_finitedegree} hold for closed Anosov manifolds? This is known if $d=2$ and one has the flat connection on a trivial bundle \cite{DS0,PSU3,G14a}.
\item 
Do the results above remain true for general connections and Higgs fields (not necessarily unitary or skew-Hermitian)? If $d=2$ this is known for line bundles (see \cite{PSU1}) and domains in $\mR^2$ \cite{E}. Another partial result for $d=2$ is in \cite{Ainsworth_thesis}.
\item 
Can one find other conditions for the absence of nontrivial twisted CKTs on closed manifolds when $d \geq 3$ besides Theorem \ref{thm_main_ckt_hd}? Is this a generic property?
\end{itemize}

\vspace{12pt}

\noindent {\bf Structure of the paper.}
This paper is organized as follows. Section \ref{sec_introduction} is the introduction and states the main results. In Section \ref{section_attenuated_motivation} we explain the relation between attenuated ray transforms and connections, and include some preliminaries regarding connections on vector bundles. Section \ref{sec_pestov} proves the Pestov identity with a connection, introduces operators relevant to this identity, and discusses spherical harmonics expansions and related estimates. In Section \ref{sec_finite_degree} we use the Pestov identity to prove the finite degree part of Theorem \ref{main_thm_closed_finitedegree} (both in the boundary and closed case). Section \ref{sec_twisted_ckts_raytransform} begins the study of twisted CKTs, proves Theorem \ref{main_thm_closed_finitedegree} in full and also proves Theorem \ref{main_thm_twistedckts_boundary}. Section \ref{sec_regularity} finishes the proof of Theorem \ref{mainthm_boundary1} using regularity results obtained via the microlocal approach of \cite{G14b}. Section \ref{scatteringequivalent} proves the scattering data result (Theorem \ref{mainthm_boundary2}), Section \ref{sec_twistedckts_twodim} discusses twisted CKTs in two dimensions and proves Theorem \ref{main_thm_twistedckts_twodim}, and the final Section \ref{sec_transparent_pairs} discusses transparent pairs and a simplified analogue of Theorem \ref{mainthm_boundary2} for closed manifolds.

\vspace{12pt}

\noindent {\bf Acknowledgements.}
C.G.\ was partially supported by grants ANR-13-BS01-0007-01 and ANR-13-JS01-0006. M.S.\ was supported in part by the Academy of Finland (Centre of Excellence in Inverse Problems Research) and an ERC Starting Grant (grant agreement no 307023). G.U.\ was partly supported by NSF and a Simons Fellowship.

\section{Attenuated ray transform and connections} \label{section_attenuated_motivation}

In this section we motivate briefly how connections may appear in integral geometry, and collect basic facts about connections on vector bundles (see \cite{Jost} for details). Readers who are familiar with these concepts may proceed directly to Section \ref{sec_pestov}.

\vspace{12pt}

\noindent {\bf Euclidean case.} We first consider the closed unit ball $M = \{ x \in \mR^d \,;\, \abs{x} \leq 1 \}$ with Euclidean metric. If $f \in C^{\infty}(M)$, the \emph{attenuated X-ray transform} $I_{\mathcal{A}} f$ of $f$ is defined by 
\[
I_{\mathcal{A}}f(x,v) = \int_0^{\ell_+(x,v)} f(x+tv) \exp \left[ \int_0^t \mathcal{A}(x+sv,v) \,ds \right]\,dt, \qquad (x,v) \in \partial(SM),
\]
where $SM = M \times S^{d-1}$ is the unit sphere bundle, $\partial(SM) = \partial M \times S^{d-1}$ is its boundary, $\ell_+(x,v)$ is the time when the line segment starting from $x$ in direction $v$ exits $M$, and $\mathcal{A} \in C^{\infty}(SM)$ is the attenuation coefficient. If $\mathcal{A} = 0$, then $I_{\mathcal{A}}$ is the classical X-ray transform which underlies the medical imaging methods CT and PET. The attenuated transform arises in various applications, such as the medical imaging method SPECT \cite{finch} or the Calder\'on problem \cite{DKSaU}, and often $\mathcal{A}$ has simple dependence on $v$. We will consider attenuations of the form 
\[
\mathcal{A}(x,v) = \sum_{j=1}^d A_j(x) v^j + \Phi(x)
\]
where $A_j, \Phi \in C^{\infty}(M)$.

Define the function 
\[
u(x,v) := \int_0^{\ell_+(x,v)} f(x+tv) \exp \left[ \int_0^t \mathcal{A}(x+sv,v) \,ds \right]\,dt, \qquad (x,v) \in SM.
\]
Then clearly $If = u|_{\partial(SM)}$. A computation shows that $u$ satisfies the first order differential equation (\emph{transport equation}) 
\begin{equation} \label{intro_transport_equation}
Xu(x,v) + \left[ \sum_{j=1}^d A_j(x)v^j + \Phi(x) \right] u(x,v) = -f(x), \qquad (x,v) \in SM,
\end{equation}
where $X$ is the geodesic vector field acting on functions $w \in C^{\infty}(SM)$ by $Xw(x,v) = \frac{\partial}{\partial t} w(x+tv,v) |_{t=0}$. The inverse problem of recovering $f$ from $I_{\mathcal{A}}f$ can thus be reduced to finding the source term $f$ in \eqref{intro_transport_equation} from boundary values of the solution $u$.

We now give a geometric interpretation of the transport equation \eqref{intro_transport_equation}. Complex valued functions $f \in C^{\infty}(M)$ may be identified with sections of the trivial vector bundle $\mc{E} := M \times \C$. The complex $1$-form $A := \sum_{j=1}^d A_j(x) \,dx^j$ on $M$ gives rise to a \emph{connection} $\nabla = d + A$ on $\mc{E}$, taking sections of $\mc{E}$ to $1$-form valued sections via 
\[
\nabla f = d_M f + Af, \qquad f \in C^{\infty}(M).
\]
The projection $\pi: SM \to M$ induces a pullback bundle $\pi^* \mc{E}$ and pullback connection $\pi^* \nabla$ over $SM$. Since $\mc{E}$ is the trivial line bundle, one has $\pi^* \mc{E} = SM \times \C$, sections of $\pi^* \mc{E}$ can be identified with functions in $C^{\infty}(SM)$, and $\pi^* \nabla$ is given by 
\[
(\pi^* \nabla) u = d_{SM} u + (\pi^* A) u, \qquad u \in C^{\infty}(SM).
\]
The geodesic vector field $X$ is a vector field on $SM$, and induces a map $\mathbb{X} := (\pi^* \nabla)_X$ on sections of $\pi^* \mc{E}$ given by 
\[
\mathbb{X} u(x,v) = Xu(x,v) + [\sum_{j=1}^d A_j(x)v^j] u(x,v), \qquad u \in C^{\infty}(SM).
\]
The transport equation \eqref{intro_transport_equation} then becomes 
\begin{equation} \label{intro_transport_equation_2}
\mathbb{X} u + (\pi^* \Phi) u = -\pi^* f
\end{equation}
where $u$ and $\pi^* f$ are now sections of $\pi^* \mc{E}$, and $\Phi$ is a smooth section from $M$ to the bundle of endomorphisms on $\mc{E}$ (\emph{Higgs field}).

\vspace{12pt}

\noindent {\bf Hermitian bundles.} 
The above discussion extends to more general vector bundles over manifolds. Let $(M,g)$ be a compact oriented Riemannian manifold with or without boundary, having dimension $d = \dim(M)$. Let $\mc{E}$ be a Hermitian vector bundle over $M$ having rank $n \geq 1$, i.e.\ each fiber $\mc{E}_x$ is an $n$-dimensional complex vector space equipped with a Hermitian inner product $\langle \,\cdot\, , \,\cdot\, \rangle$ varying smoothly with respect to base point.


We assume that $\mc{E}$ is equipped with a connection $\nabla$, so for any vector field $Y$ in $M$ there is a $\C$-linear map on sections 
\[
\nabla_Y: C^{\infty}(M ; \mc{E}) \to C^{\infty}(M ; \mc{E})
\]
which satisfies $\nabla_{fY} u = f \nabla_Y u$ and $\nabla_Y (fu) = (Yf)u + f \nabla_Y u$ for $f \in C^{\infty}(M)$ and $u \in C^{\infty}(M ; \mc{E})$. There is a corresponding map 
\[
\nabla: C^{\infty}(M ; \mc{E}) \to C^{\infty}(M ; T^*M \otimes \mc{E}).
\]
If $\mc{E}$ is trivial over a coordinate neighborhood $U \subset M$ and if $( e_1, \ldots, e_n )$ is an orthonormal frame for local sections over $U$, then $\nabla$ has the local representation 
\begin{align*}
\nabla(\sum_{k=1}^nu^k e_k) &= \sum_{k=1}^n(du^k + \sum_{l=1}^n A^k_l u^l) \otimes e_k, \\
\nabla_Y(\sum_{k=1}^nu^k e_k) &= \sum_{k=1}^n(du^k + \sum_{l=1}^n A^k_l u^l)(Y) e_k \notag
\end{align*}
where $A = (A^k_l)$ is an $n \times n$ matrix of $1$-forms in $U$, called the connection $1$-form corresponding to $( e_1, \ldots, e_n )$. Locally one writes 
\[
\nabla = d + A.
\]
We say that $\nabla$ is a \emph{unitary} connection (or \emph{Hermitian} connection) if it is compatible with the Hermitian structure:
\[
Y \langle u, u' \rangle = \langle \nabla_Y u, u' \rangle + \langle u, \nabla_Y u' \rangle, \qquad Y \text{ vector field, } u,u' \in C^{\infty}(M ; \mc{E}).
\]
Equivalently, $\nabla$ is Hermitian if in any trivializing neighbourhood the matrix $(A^k_l)$ is skew-Hermitian.

If $\nabla$ is a connection on a complex vector bundle $\mc{E}$, we can define a linear operator 
\[
\nabla: C^{\infty}(M ; \Lambda^k(T^*M) \otimes \mc{E}) \to C^{\infty}(M ; \Lambda^{k+1}(T^*M) \otimes \mc{E}) 
\]
for $k \geq 1$ by requiring that 
\[
\nabla(\omega \wedge u) = d\omega \wedge u + (-1)^k \omega \wedge \nabla u, \quad \omega \in C^{\infty}(M ; \Lambda^k(T^*M)), \ \ u \in C^{\infty}(M ; \mc{E})
\]
where $\omega \wedge u$ is a natural wedge product of a differential form $\omega$ and a section $u$. The \emph{curvature} of $(\mc{E},\nabla)$ is the operator 
\[
f^{\mc{E}} = \nabla \circ \nabla: C^{\infty}(M ; \mc{E}) \to C^{\infty}(M ; \Lambda^2(T^*M) \otimes \mc{E}).
\]
This is $C^{\infty}(M)$-linear and can be interpreted as an element of $C^{\infty}(M ; \Lambda^2(T^*M) \otimes \mathrm{End}(\mc{E}))$, where $\mathrm{End}(\mc{E})$ is the bundle of endomorphisms of $\mc{E}$. If $\mc{E}$ is trivial over $U$ and $\nabla = d+A$ with respect to an orthonormal frame $( e_1, \ldots, e_n )$ for local sections over $U$, then 
\[
f^{\mc{E}}(\sum_{k=1}^nu^k e_k) = \sum_{k,l=1}^n(dA^k_l + A^k_m \wedge A^m_l) u^l \otimes e_k.
\]
Locally one writes 
\[
f^{\mc{E}} = dA + A \wedge A.
\]
If $\mc{E}$ and $\nabla$ are unitary, then $dA + A \wedge A$ is a skew-Hermitian matrix of $2$-forms and 
$f^{\mc{E}} \in C^{\infty}(M ; \Lambda^2(T^*M) \otimes \mathrm{End_{sk}}(\mc{E}))$.

\vspace{10pt}

\noindent {\bf Pullback bundles.}\ Next we consider the lift of $\nabla$ to the pullback bundle over $SM$. Let $\pi: SM \to M$ be the natural projection. The pullback bundle of $\mc{E}$ by $\pi$ is 
\[
\pi^* \mc{E} = \{ ((x,v), e) \,;\, (x,v) \in SM, \ e \in \mc{E}_x \}.
\]
Then $\pi^* \mc{E}$ is a Hermitian bundle over $SM$ having rank $n$. The connection $\nabla$ induces a pullback connection $\pi^* \nabla$ in $\pi^* \mc{E}$, defined uniquely by 
\[
(\pi^* \nabla)(\pi^* u) = \pi^* (\nabla u), \qquad u \in C^{\infty}(M ; \mc{E}).
\]
In coordinates $\pi^* \nabla$ looks as follows: if $U$ is a trivializing neighbourhood of $\mc{E}$ and if $( e_1, \ldots, e_n )$ is an orthonormal frame of sections over $U$, then $( \tilde{e}_1, \ldots, \tilde{e}_n )$ where $\tilde{e}_j = e_j \circ \pi$ is a frame of sections of $\pi^* \mc{E}$ over $SU$, and 
\[
(\pi^* \nabla)(\sum_{k=1}^nu^k \tilde{e}_k) = \sum_{k=1}^n (d_{SM} u^k + \sum_{l=1}^n(\pi^* A^k_l) u^l) \otimes \tilde{e}_k, \qquad u^k \in C^{\infty}(SU).
\]
Later we will omit $\pi^*$ and we will denote the pullback bundle and connection just by $\mc{E}$ and $\nabla$ (we will also  write $e_j$ instead of $\tilde{e}_j = e_j \circ \pi$).

\section{Pestov identity with a connection} \label{sec_pestov}

In this section we will state and prove the Pestov identity with a connection. We will also give several related inequalities that will be useful for proving the main results.

\vspace{12pt}

\noindent 3.1. {\bf Unit sphere bundle.}
To begin, we need to recall certain notions related to the geometry of the unit sphere bundle. We follow the setup and notation of \cite{PSU_hd}; for other approaches and background information see \cite{GK2,Sh,Pa,Kn,DS}.

Let $(M,g)$ be a $d$-dimensional compact Riemannian manifold with or without boundary, having unit sphere bundle $\pi: SM\to M$, and let $X$ be the geodesic vector field. We equip $SM$ with the Sasaki metric. If $\mathcal V$ denotes the vertical subbundle
given by $\mathcal V=\mbox{\rm Ker}\,d\pi$, then there is an orthogonal splitting with respect to the Sasaki metric:
\begin{equation}\label{TSM}
TSM=\re X\oplus {\mathcal H}\oplus {\mathcal V}.
\end{equation}
The subbundle ${\mathcal H}$ is called the horizontal subbundle. Elements in $\mathcal H(x,v)$ and $\mathcal V(x,v)$ are canonically identified with elements in the codimension one subspace $\{v\}^{\perp}\subset T_{x}M$ by the isomorphisms
\[ d\pi_{x,v} : \mc{V}(x,v)\to \{v\}^{\perp} ,  \quad \mc{K}_{x,v}: \mathcal H(x,v)\to \{v\}^{\perp},\]
here $\mc{K}_{(x,v)}$ is the connection map coming from Levi-Civita connection.
We will use these identifications freely below.  

We shall denote by $\mathcal Z$ the set of smooth functions $Z:SM\to TM$ such that $Z(x,v)\in T_{x}M$ and $\langle Z(x,v),v\rangle=0$ for all $(x,v)\in SM$.
Another way to describe the elements of $\mathcal Z$ is a follows. Consider the pull-back bundle $\pi^*TM$ over $SM$.  Let $N$ denote the subbundle of $\pi^*TM$ whose fiber over $(x,v)$
is given by $N_{(x,v)}=\{v\}^{\perp}$. Then $\mathcal Z$ coincides with the smooth sections
of the bundle $N$. Notice that $N$ carries a natural scalar product and thus an $L^{2}$-inner product 
(using the Liouville measure on $SM$ for integration).

Given a smooth function $u\in C^{\infty}(SM)$ we can consider its gradient $\nabla u$ with respect to the Sasaki metric. 
Using the splitting above we may write uniquely in the decomposition \eqref{TSM}
\[\nabla u=((Xu)X,\hd u,  \vd u). \]
The derivatives $\hd u\in  \mc{Z}$ and $\vd u\in \mc{Z}$ are called horizontal and vertical derivatives respectively. Note that this differs from the definitions in \cite{Kn,Sh} since here all objects are defined on $SM$ as opposed to $TM$.

Observe that $X$ acts on $\mathcal Z$ as follows:
\begin{equation}\label{XonZ}
XZ(x,v):=\frac{DZ(\varphi_{t}(x,v))}{dt}|_{t=0}
\end{equation}
where $D/dt$ is the covariant derivative with respect to Levi-Civita connection and $\varphi_t$ is the geodesic flow.  With respect to the $L^2$-product on $N$, the formal adjoints of $\vd:C^{\infty}(SM)\to\mathcal Z$ and $\hd:C^{\infty}(SM) \to \mathcal Z$ are denoted by $-\vdiv$ and $-\hdiv$ respectively. Note that since $X$ leaves invariant the volume form of the Sasaki metric we have $X^*=-X$ for both actions of $X$ on $C^{\infty}(SM)$ and $\mathcal Z$.
In what follows, we will need to work with the complexified version of $N$ with its natural inherited Hermitian product. This will be clear from the context and we shall employ the same letter $N$ to denote the complexified
bundle and also $\mathcal Z$ for its sections.

\vspace{12pt}

\noindent 3.2. {\bf Hermitian bundles.}
Consider now a Hermitian vector bundle $\mc{E}$ of rank $n$ over $M$ with a Hermitian product $\langle \,\cdot\,,\,\cdot\,\rangle_{\mc{E}}$, and let $\nabla^{\mc{E}}$ be a Hermitian connection on $\mc{E}$ (i.e. satisfying\ \eqref{Hermitian}). Using the projection $\pi: SM \to M$, we have the pullback bundle $\pi^* \mc{E}$ over $SM$ and pullback connection $\pi^* \nabla^{\mc{E}}$ on $\pi^* \mc{E}$. For convenience, we will omit $\pi^*$ and use the same notation $\mc{E}$ and $\nabla^{\mc{E}}$ also for the pullback bundle and connection.

\medskip

\noindent{\bf Remark on notation}. The reader may have noticed that we have adorned the notation for the unitary connection with the superscript $\mc{E}$. The reason for doing so at various points in what is about to follow is to make sure that there is a clear signal of the influence of the connection in the vertical and horizontal components.
We hope this will not cause confusion.

\medskip

If $u\in C^\infty(SM; \mc{E})$, then $\nabla^{\mc{E}}u\in C^\infty(SM; T^*(SM)\otimes \mc{E})$, and using the Sasaki metric on $T(SM)$ we can identify this with an element of 
$C^\infty(SM; T(SM)\otimes \mc{E})$, and thus we can split according to \eqref{TSM}
\[ \nabla^{\mc{E}}u = (\mathbb{X}u, \hd\,  ^\mc{E} u,  \vd \,^\mc{E} u) ,\quad \mathbb{X}u:= \nabla_X^{\mc{E}}u\]
and we can view $\hd \, ^\mc{E} u$ and $\vd \, ^\mc{E} u$ as elements in $C^\infty(SM; N\otimes \mc{E})$. The operator $\mathbb{X}$ acts on $C^\infty(SM; \mc{E})$ and we can also define a similar operator, still denoted 
by $\mathbb{X}$, on $C^\infty(SM;N\otimes \mc{E})$ by 
\begin{equation}\label{bfXonE}
\mathbb{X}(Z\otimes e):= (XZ) \otimes e + Z \otimes (\mathbb{X}e),   \qquad Z \otimes e \in C^\infty(SM;N\otimes \mc{E})
\end{equation}
where $X$ acts on $\mc{Z}$ by \eqref{XonZ}. There is a natural Hermitian product $\langle \,\cdot\,,\,\cdot\, \rangle_{N\otimes \,\mc{E}}$ on $N\otimes \,\mc{E}$ induced by $g$ and 
$\langle \,\cdot\,,\,\cdot\, \rangle_{\mc{E}}$. We define $\hdiv\, ^{\mc{E}}$ and $\vdiv\, ^{\mc{E}}$ to be
the adjoints of $-\hd\, ^{\mc{E}}$ and $-\vd\, ^{\mc{E}}$ in the $L^2$ inner product.

Next we define curvature operators. If $R$ is the Riemann curvature tensor of $(M,g)$, we can view it as an operator on the bundles $N$  and $N\otimes \mc{E}$ over $SM$ by the actions 
\begin{equation}\label{curvatureg}
R(x,v)w := R_x(w,v)v ,   \quad  R(x,v) (w\otimes e):= (R_x(w,v)v)\otimes e
\end{equation} 
if $(x,v)\in SM$, $w\in \{v\}^\perp$, and $e\in \mc{E}_{(x,v)}$. The curvature of the connection $\nabla^{\mc{E}}$ on $\mc{E}$ is denoted $f^\mc{E}\in C^\infty(M;\Lambda^2T^*M \otimes {\rm End}_{\rm sk}(\mc{E}))$ and it is a $2$-form with values in skew-Hermitian 
endomorphisms of $\mc{E}$. In particular, to $f^{\mc{E}}$ we can associate an operator 
$F^{\mc{E}}\in C^\infty(SM; N\otimes {\rm End}_{\rm sk}(\mc{E}))$ defined by 
\begin{equation}\label{defFE} 
\cjg f^{\mc{E}}_{x}(v,w)e,e'\cjd_{\mc{E}} =\cjg F^{\mc{E}}(x,v)e,w \otimes e' \cjd_{N\otimes \,\mc{E}}, 
\end{equation}
where $(x,v)\in SM, \ w\in \{v\}^{\perp}, \ e,e'\in \mc{E}_{(x,v)}.$

Next we give a technical lemma which expresses $F^{\mc{E}}$ in terms of the local connection 
$1$-form $A\in C^\infty(U; T^*M\otimes {\rm End}_{\rm sk}(\mathbb{C}^n))$ of $\nabla^{\mc{E}}$ in a local orthonormal frame $(e_1,\dots,e_n)$ of $\mc{E}$ over a chart $U\subset M$.
In that basis, the curvature $f^{\mc{E}}$ can be written as the $2$-form $f^{\mc{E}}=dA+A\wedge A$. 
We pull back everything to $SM$ (including the frame) and also view $A$ as an element of 
$C^\infty(SU; {\rm End}_{\rm sk}(\mathbb{C}^n))$ by setting $A(x,v):=A_x(v)$. 
\begin{Lemma} \label{FEvsA}
In the local orthonormal frame $(e_1,\dots,e_n)$, the expression of $F^{\mc{E}}$ in terms of the connection $1$-form 
$A$ is 
\[ F^{\mc{E}}=  X(\vd A)-\hd A +[A,\vd A] \]
as elements of $C^{\infty}(SU ; N \otimes \mathrm{End}_{\mathrm{sk}}(\C^n))$.
\end{Lemma}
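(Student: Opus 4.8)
The plan is to work in a single trivializing chart $U \subset M$ with orthonormal frame $(e_1,\dots,e_n)$, pull everything back to $SU$, and simply unwind the definition \eqref{defFE} of $F^{\mc{E}}$ together with the coordinate expressions for $\vd$, $\hd$ and the operator $\mathbb{X}$ acting on sections of $\mc{E}$. Since the frame $(e_1,\dots,e_n)$ is fixed and $\mc{E}$-flat in the sense that the $e_j$ are constant sections, the connection $\nabla^{\mc{E}} = d + A$ acts on a section $u = \sum u^k e_k$ by $\nabla^{\mc{E}} u = \sum_k (d u^k + \sum_l A^k_l u^l) \otimes e_k$, so $\mathbb{X}(u^k e_k) = (Xu^k) e_k + \sum_l A^k_l(x,v)\, u^l\, e_k$ where $A(x,v) = A_x(v)$; here $X$ is the ordinary geodesic vector field on scalar functions. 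Thus on sections of $\mc{E}$ over $SU$ one has the operator identity $\mathbb{X} = X + A$, and on $N \otimes \mc{E}$ the operator $\mathbb{X}$ from \eqref{bfXonE} is $\mathbb{X} = X + A$ as well, where now $X$ acts on $\mc{Z}$ by \eqref{XonZ}. The vertical and horizontal derivatives $\vd{}^{\mc{E}}$ and $\hd{}^{\mc{E}}$ pick up the same correction: $\vd{}^{\mc{E}} u = \vd u + (\vd A) u$ and $\hd{}^{\mc{E}} u = \hd u + (\hd A) u$ as sections of $N \otimes \mc{E}$, where $\vd A, \hd A \in C^\infty(SU; N \otimes \mathrm{End}_{\mathrm{sk}}(\C^n))$ are obtained by differentiating $A(x,v) = A_x(v)$ in the vertical and horizontal directions.

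The key computation is then to express $F^{\mc{E}}$ via \eqref{defFE}. The pairing $\langle f^{\mc{E}}_x(v,w)e, e'\rangle_{\mc{E}}$ with $f^{\mc{E}} = dA + A \wedge A$ evaluated on the pair of vectors $(v,w)$ — where $v$ is the base direction in $SM$ and $w \in \{v\}^\perp$ ranges over the $N$-fiber — should be matched against $\langle F^{\mc{E}}(x,v)e, w\otimes e'\rangle_{N\otimes\mc{E}}$. The heart of the matter is to recognize that $dA$ evaluated on $(v, w)$, when $w$ is thought of as the vertical direction, is precisely $X(\vd A) - \hd A$: differentiating $A(x,v) = A_x(v)$ along the geodesic flow and then in the vertical direction reproduces the antisymmetrized derivative $dA$, and the commutator of the horizontal and vertical structure on $SM$ (i.e.\ the identity relating $X$, $\hd$, $\vd$ acting on the function-valued object $A$) accounts for the exact form of the combination $X(\vd A) - \hd A$. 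The $A \wedge A$ term of $f^{\mc{E}}$ contributes $[A, \vd A]$ after the same identification. This can be carried out either by a direct computation in geodesic normal coordinates at the base point $x$ (so that the Christoffel symbols vanish at $x$ and the horizontal derivative reduces to the coordinate derivative), or by appealing to the known structure equations/commutator formulas for $X$, $\hd$, $\vd$ on $SM$ from \cite{PSU_hd}.

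The step I expect to be the main obstacle is the careful bookkeeping of identifications: $A$ is simultaneously a $1$-form on $M$, an $\mathrm{End}_{\mathrm{sk}}(\C^n)$-valued function $A(x,v) = A_x(v)$ on $SM$, and — after applying $\vd$ — an element of $N \otimes \mathrm{End}_{\mathrm{sk}}(\C^n)$; one must track which slot of $N$ the various vectors $v$, $w$ occupy and make sure the $2$-form antisymmetry of $dA$ in the pair $(v,w)$ is correctly transcribed into the operator $F^{\mc{E}}$ acting on $w$. In particular, verifying that $dA(v,w) = \big(X(\vd A) - \hd A\big)(x,v)$ paired with $w$ requires knowing that $\vd(A_x(v))$ in the direction $w$ equals $A_x(w)$ (since $v \mapsto A_x(v)$ is linear, its vertical derivative is constant in $v$ and equals $A_x$ restricted to $\{v\}^\perp$), and that applying $X$ to this brings in the base-point derivative $\partial_v(A_x(w))$ along the geodesic, which combines with $\hd A$ — the base-point derivative of $A_x(v)$ in the $w$-direction — to give exactly $\partial_x A(v,w) - \partial_x A(w,v) = dA(v,w)$ in normal coordinates. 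Once these identifications are pinned down the remaining algebra is routine, and the $[A, \vd A]$ term falls out of $(A\wedge A)(v,w) = A(v)A(w) - A(w)A(v)$ under the same translation.
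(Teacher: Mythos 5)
Your central computation does mirror the paper's proof: the same linearity observation $\langle \vd A(x,v), w\rangle = A_x(w)$ handles the $[A,\vd A]$ term and identifies it with $(A\wedge A)_x(v,w)$, and you then show $\langle X(\vd A) - \hd A, w\rangle = (dA)_x(v,w)$ by differentiating $A$ along the geodesic through $(x,v)$ and along the horizontal lift of $w$; the paper does exactly this using parallel-transported frames $e_v(t)$, $e_w(t)$, which is what your ``normal coordinates at $x$'' shortcut amounts to at $t=0$.

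However, your setup paragraph contains two incorrect formulas which, while unused in the actual calculation, should be flagged. You write $\vd{}^{\mc{E}} u = \vd u + (\vd A)u$ and $\hd{}^{\mc{E}} u = \hd u + (\hd A)u$. Both are wrong. Since the connection on $\pi^*\mc{E}$ is pulled back from $M$, the connection form $\pi^*A$ annihilates vertical vectors, so there is no correction at all to $\vd{}^{\mc{E}}$: in the frame, $\vd{}^{\mc{E}}(\sum u^k e_k) = \sum (\vd u^k)\otimes e_k$. For the horizontal derivative the correction term is $\vd A$, not $\hd A$: writing $A(x,v)=A_x(v)$, the value of a $1$-form $a$ on the horizontal vector identified with $Z\in N$ is $\langle \vd(a(x,v)), Z\rangle$, so $\hd{}^{\mc{E}}(\sum u^k e_k) = \sum(\hd u^k + \sum_l (\vd A^k_l)u^l)\otimes e_k$; this is exactly what the paper derives in the proof of the commutator lemma. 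Since the core of your argument never invokes these formulas, the proof still goes through, but a reader checking the preliminaries would be stopped by them.
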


\begin{proof}
Note that we can interpret the claim as an identity for $n \times n$ matrix functions with entries in $N$, where $X, \vd, \hd$ act elementwise. Write $A=(A^k_l)_{k,l=1}^n$, where each $A^k_l$ is a scalar 1-form. Since $v\mapsto A^k_l(x,v)$ is linear, $\langle \vd A^k_l(x,v),w\rangle=A^k_l(x,w)$. From this we easily derive
\[g([A,\vd A](x,v),w)=A(x,v)A(x,w)-A(x,w)A(x,v)=(A\wedge A)_{x}(v,w) \]
where $g(\,\cdot\,, w)$ acts elementwise. We just need to prove that 
\[
g(X(\vd A)(x,v)-\hd A(x,v),w)=(dA)_{x}(v,w).
\]
It suffices to check this equality when $A$ is a scalar 1-form. 

Let $e_{v}(t)$ denote the parallel transport of $v$ along the geodesic $\gamma_{w}(t)$ determined by $(x,w)$.
Similarly, let $e_{w}(t)$ denote the parallel transport of $w$ along the geodesic $\gamma_{v}(t)$ determined by $(x,v)$. By definition of $dA$:
\[(dA)_{x}(v,w)=\frac{d}{dt}|_{t=0}A_{\gamma_{v}(t)}(e_{w}(t))-\frac{d}{dt}|_{t=0}A_{\gamma_{w}(t)}(e_{v}(t)) .\]
But by definition of $\hd$ we have
\[\frac{d}{dt}|_{t=0}A_{\gamma_{w}(t)}(e_{v}(t))=g(\hd A(x,v),w)\]
since the curve $t\mapsto (\gamma_{w}(t),e_{v}(t))\in SM$ goes through $(x,v)$ and its tangent vector has only horizontal component equal to $w$.
Finally
\begin{align*}
\langle X\vd A(x,v),w\rangle&=\langle \frac{D}{dt}|_{t=0}\vd A(\gamma_{v}(t),\dot{\gamma}_{v}(t)),e_{w}(0)\rangle\\
&=\frac{d}{dt}|_{t=0}\langle \vd A(\gamma_{v}(t),\dot{\gamma}_{v}(t)),e_{w}(t)\rangle\\
&=\frac{d}{dt}|_{t=0}A_{\gamma_{v}(t)}(e_{w}(t))\\
\end{align*}
and the lemma is proved.
\end{proof}

\vspace{12pt}

\noindent 3.3. {\bf Pestov identity with a connection.}
We begin with some basic commutator formulas, which generalize the corresponding formulas in \cite[Lemma 2.1]{PSU_hd} to the case of where one has a Hermitian bundle with unitary connection. The proof also gives local frame representations for the operators involved (this could be combined with \cite[Appendix A]{PSU_hd} to obtain local coordinate formulas)

\begin{Lemma} \label{lemma_basic_commutator_formulas}
The following commutator formulas hold on $C^{\infty}(SM;\mc{E})$:
\begin{align}
[\mathbb{X},\vd \, ^{\mc E}]&=-\hd\, ^{\mc{E}}, \label{eq:commXV}\\
[\mathbb{X},\hd\, ^\mc{E}]&=R\,\vd\, ^{\mc{E}}+F^{\mc{E}},  \label{eq:commXH}\\
\hdiv\,^{\mc{E}}\,\vd\,^{\mc{E}}-\vdiv\,^{\mc{E}}\hd\,^{\mc{E}}&=(d-1)\mathbb{X}, \label{eq:commdiv}
\end{align}
where the maps $R$ and $F^{\mc{E}}$ are defined in \eqref{curvatureg} and \eqref{defFE}.
Taking adjoints, we also have the following commutator formulas on 
$C^{\infty}(SM, N\otimes \mc{E})$:
\begin{align*}
[\mathbb{X},\vdiv\,^{\mc{E}}] &= -\hdiv\,^{\mc{E}}, \\
[\mathbb{X},\hdiv\,^{\mc{E}}] &= -\vdiv\, ^{\mc{E}} R+(F^{\mc{E}})^*
\end{align*}
where $(F^{\mc{E}})^*:C^{\infty}(SM;N\otimes \mc{E})\to C^{\infty}(SM;\mc{E})$ is the 
$L^2$-adjoint of $C^{\infty}(SM;\mc{E})\ni u\mapsto F^{\mc{E}}u\in C^{\infty}(SM,N\otimes\mc{E})$. 
\end{Lemma}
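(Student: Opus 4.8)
The plan is to reduce all five identities to the corresponding scalar commutator formulas of \cite[Lemma 2.1]{PSU_hd} by computing in a local orthonormal frame. Fix a chart $U\subset M$ over which $\mc{E}$ is trivialized by an orthonormal frame $(e_1,\dots,e_n)$, so that $\nabla^{\mc{E}}=d+A$ with $A$ a skew-Hermitian matrix of $1$-forms; pull $A$ and the frame back to $SU$ and write $A(x,v)=A_x(v)$ as in Lemma \ref{FEvsA}. Starting from the local description of the pullback connection $\pi^*\nabla^{\mc{E}}$ recalled in Section \ref{section_attenuated_motivation}, and decomposing $\pi^*A$ along the splitting \eqref{TSM}, one finds that the $X$-component of $\pi^*A$ is $A(x,v)$, its horizontal component is $\vd A$ (in the sense of Lemma \ref{FEvsA}), and its vertical component vanishes since $\pi^*A$ annihilates $\mc{V}=\mathrm{Ker}\,d\pi$. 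Hence, for $u=\sum_k u^k e_k\in C^\infty(SU;\mc{E})$ written in the frame, one has the expressions $\mathbb{X}u=Xu+Au$, $\vd\,^{\mc{E}}u=\vd u$ and $\hd\,^{\mc{E}}u=\hd u+(\vd A)u$, where $X,\vd,\hd$ act on the components $u^k$ and the matrix factors act by matrix multiplication; similarly the operator $\mathbb{X}$ on $C^\infty(SU;N\otimes\mc{E})$ defined by \eqref{bfXonE} becomes $W\mapsto XW+AW$, with $X$ acting on the $N$-valued entries via \eqref{XonZ}. Checking the consistency of this last formula with \eqref{bfXonE} is the only purely notational point.

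With these frame formulas, \eqref{eq:commXV}, \eqref{eq:commXH} and \eqref{eq:commdiv} become bookkeeping on top of the scalar identities. For \eqref{eq:commXV}, expand $[\mathbb{X},\vd\,^{\mc{E}}]u=\mathbb{X}(\vd u)-\vd\,^{\mc{E}}(Xu+Au)$: the scalar commutator $[X,\vd]=-\hd$ produces $-\hd u$, the Leibniz rule gives $\vd(Au)=(\vd A)u+A\,\vd u$, the $A\,\vd u$ terms cancel, and what remains is $-\hd u-(\vd A)u=-\hd\,^{\mc{E}}u$. For \eqref{eq:commXH}, the analogous expansion together with the scalar identity $[X,\hd]=R\,\vd$ produces the first-order term $R\,\vd\,^{\mc{E}}u$ (note that $R$ acts identically on $N$ and on $N\otimes\mc{E}$ by \eqref{curvatureg}), while the zeroth-order remainder is precisely $\big(X(\vd A)-\hd A+[A,\vd A]\big)u$, which by Lemma \ref{FEvsA} equals $F^{\mc{E}}u$; this gives $[\mathbb{X},\hd\,^{\mc{E}}]=R\,\vd\,^{\mc{E}}+F^{\mc{E}}$. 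For \eqref{eq:commdiv}, note first that $\vd\,^{\mc{E}}=\vd$ forces $\vdiv\,^{\mc{E}}=\vdiv$, while $\hdiv\,^{\mc{E}}$ differs from $\hdiv$ by the pointwise $L^2$-adjoint of the zeroth-order operator $u\mapsto(\vd A)u$; substituting, the scalar identity $\hdiv\,\vd-\vdiv\,\hd=(d-1)X$ takes care of the principal part, and the remaining $A$-terms are handled using the product rule $\vdiv(fZ)=f\vdiv Z+\langle\vd f,Z\rangle_N$ together with the fibrewise identity $\vdiv(\vd A)=-(d-1)A(x,v)$ (the vertical divergence over $S_xM$ of the tangential projection of a fixed vector). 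The cross terms of type $\langle\vd u,\vd A\rangle_N$ cancel and only $(d-1)A$ survives, so $\hdiv\,^{\mc{E}}\vd\,^{\mc{E}}-\vdiv\,^{\mc{E}}\hd\,^{\mc{E}}=(d-1)(X+A)=(d-1)\mathbb{X}$.

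Finally, the two commutator formulas on $C^\infty(SM;N\otimes\mc{E})$ follow by taking $L^2$-adjoints of \eqref{eq:commXV} and \eqref{eq:commXH}, using $(\vd\,^{\mc{E}})^*=-\vdiv\,^{\mc{E}}$ and $(\hd\,^{\mc{E}})^*=-\hdiv\,^{\mc{E}}$ by definition, the self-adjointness of $R$ (a symmetry of the Riemann tensor), and the skew-adjointness $\mathbb{X}^*=-\mathbb{X}$ on both $C^\infty(SM;\mc{E})$ and $C^\infty(SM;N\otimes\mc{E})$, which holds because $\nabla^{\mc{E}}$ is unitary and the geodesic flow preserves the Liouville measure, so that $\int_{SM}\mathbb{X}\langle a,b\rangle\,d(\text{Liouville})=0$. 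I expect the main obstacle to be the Leibniz-rule bookkeeping in \eqref{eq:commdiv}: one has to verify that all the $\langle\vd u,\vd A\rangle_N$-type terms, and the signs coming from $A^*=-A$, conspire so that exactly the zeroth-order term $(d-1)A(x,v)$ is left. Everything else is a routine consequence of \cite[Lemma 2.1]{PSU_hd}, Lemma \ref{FEvsA}, and the definitions of the operators, once the pullback-connection conventions are set up carefully.
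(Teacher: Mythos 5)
Your proposal is correct and follows essentially the same route as the paper: reduce to a local orthonormal frame, compute the frame representations $\mathbb{X}=X+A$, $\vd^{\mc{E}}=\vd$, $\hd^{\mc{E}}=\hd+(\vd A)\cdot$, and reduce each identity to the scalar commutators of \cite[Lemma~2.1]{PSU_hd} plus Lemma~\ref{FEvsA} for the curvature term. Your write-up is in places slightly more explicit than the paper (the cancellation of the $\langle\vd u,\vd A\rangle_N$ cross-terms in \eqref{eq:commdiv}, and the justification that $\mathbb{X}^*=-\mathbb{X}$ and $R^*=R$ for the adjoint identities), but these are details the paper elides as routine rather than genuine differences in method.
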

\begin{proof}
It suffices to prove these formulas for a local orthonormal frame $(e_1,\dots,e_n)$ of $\mc{E}$ over a trivializing neighborhood  $U\subset M$.
The connection in this frame will be written as $d+A$ for some connection $1$-form $A\in C^{\infty}(U;T^*U\otimes {\rm End}_{\rm sk}(\C^n))$, i.e.\ we have (using the Einstein summation convention with sums from $1$ to $n$)
\[
\nabla^{\mc{E}}(\sum_{k=1}^n u^k e_k) = \sum_{k=1}^n (du^k + \sum_{l=1}^nA^k_l u^l) \otimes e_k.
\]
We alternatively view $A$ as an element in $C^{\infty}(SU;{\rm End}_{\rm sk}(\C^n))$ of degree $1$ in the variable $v$, by setting $A(x,v) = A_x(v)$.
We pull back the frame to $SU$ (and continue to write $(e_j)$ for the frame) and the connection. Then we get the local frame representations 
\begin{align*}
\mathbb{X}(\sum_{k=1}^nu^k e_k) &= \sum_{k=1}^n (Xu^k + \sum_{l=1}^n A^k_l u^l) e_k, \\
\vd\,^{\mc{E}}(\sum_{k=1}^n u^k e_k) &=  \sum_{k=1}^n (\vd u^k) \otimes e_k.
\end{align*}
Note in particular that $\vd \, ^{\mc{E}}$ does not depend on the connection. To compute a local representation for the horizontal derivative,  we take an orthonormal frame $(Z_1,\dots,Z_{d-1})$ of $N$ over 
$SU$. We can write 
\[
\hd\,^{\mc{E}} (\sum_{k=1}^nu^k e_k) = \sum_{j=1}^{d-1}\sum_{k=1}^n (du^k + \sum_{l=1}^nA^k_l u^l)(Z_j) Z_j \otimes e_k.
\]
Since any $1$-form $a$ satisfies $\vd (a(x,v)) = \sum_{j=1}^{d-1} a(Z_j) Z_j$, we obtain 
\[
\hd\,^{\mc{E}} (\sum_{k=1}^n u^k e_k) = \sum_{k=1}^n(\hd u^k + \sum_{l=1}^n(\vd A^k_l) u^l) \otimes e_k.
\]

We can now use the above formulas and \eqref{bfXonE} to compute 
\[
[\mathbb{X}, \vd\,^{\mc{E}}](\sum_{k=1}^n u^k e_k) = \sum_{k=1}^n([X,\vd] u^k - \sum_{l=1}^n(\vd A^k_l) u^l) \otimes e_k.
\]
By \cite[Lemma 2.1]{PSU_hd} we have $[X, \vd]u^k = -\hd u^k$ and thus the first identity \eqref{eq:commXV} is proved. We also get 
\[\begin{split}
[\mathbb{X}, \hd\,^{\mc{E}}](\sum_{k=1}^n u^k e_k) = & \sum_{k=1}^n( [X, \hd] u^k + \sum_{l=1}^n(X \vd A^k_l - \hd A^k_l) u^l ) \otimes e_k\\
&  + \sum_{k,l,r=1}^n((A^k_r \vd A^r_l - (\vd A^k_r) A^r_l) u^l) \otimes e_k
\end{split}\]
which proves the second identity \eqref{eq:commXH} by using the fact that $[X, \hd]u^k=R\vd u^k$ (\cite[Lemma 2.1]{PSU_hd}) and Lemma \ref{FEvsA} which expresses $F^{\mc{E}}$ in terms of $A$. The third formula \eqref{eq:commdiv} follows similarly: a computation in the local frame gives
\[\begin{split}
( \hdiv\,^{\mc{E}}\,\vd\,^{\mc{E}}-\vdiv\,^{\mc{E}}\hd\,^{\mc{E}})(\sum_{k=1}^n u^k e_k)= &
\sum_{k=1}^n ((\hdiv \vd\,-\vdiv \hd)u^k)e_k -\sum_{k,l=1}^n (\vdiv \vd A^k_l)u^l e_k.
\end{split}\]
Since $-\vdiv \vd a = (d-1)a$ if $a \in C^{\infty}(SM)$ corresponds to a $1$-form, the last terms in the sum become
$(d-1)A^k_l u^l e_k$. We also have $(\hdiv \vd\,-\vdiv \hd) u^k= (d-1)Xu^k$ by  \cite[Lemma 2.1]{PSU_hd} 
 and this achieves the proof of \eqref{eq:commdiv}.
\end{proof}

The next proposition states the Pestov identity with a connection. The proof is identical to the proof of \cite[Proposition 2.2]{PSU_hd} upon using the commutator formulas in Lemma \ref{lemma_basic_commutator_formulas}.

\begin{Proposition} \label{prop_pestov}
Let $(M,g)$ be a compact Riemannian manifold with or without boundary, and let $(\mc{E},\nabla^{\mc{E}})$  be a Hermitian bundle with Hermitian connection over $M$, which we pull back to $SM$. 
Then
\begin{equation*}
\norm{\vd\, ^{\mc{E}} \mathbb{X} u}^2 = \norm{\mathbb{X}\vd\, ^{\mc{E}} u}^2-
(R\,\vd\, ^{\mc{E}} u,\vd\, ^{\mc{E}} u) -(F^{\mc{E}}u,\vd\, ^\mc{E} u)+ 
(d-1)\norm{\mathbb{X} u}^2
\end{equation*}
for any $u\in C^{\infty}(SM;\mc{E})$, with $u|_{\partial(SM)} = 0$ in the boundary case. The maps $R$ and $F^{\mc{E}}$ are defined in \eqref{curvatureg} and \eqref{defFE}.
\end{Proposition}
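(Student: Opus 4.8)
The plan is to follow the standard derivation of the Pestov identity as in \cite[Proposition 2.2]{PSU_hd}, but with $X,\vd,\hd$ replaced by their twisted counterparts $\mathbb{X},\vd\,^{\mc E},\hd\,^{\mc E}$, and to feed in the commutator identities of Lemma \ref{lemma_basic_commutator_formulas}. The only structural input beyond those commutators is that $\mathbb{X}$ is skew-adjoint: since $\nabla^{\mc E}$ is unitary, \eqref{Hermitian} gives $X(\,\cdot\,,\,\cdot\,)_{\mc E} = (\mathbb{X}\,\cdot\,,\,\cdot\,)_{\mc E} + (\,\cdot\,,\mathbb{X}\,\cdot\,)_{\mc E}$, and since $X$ preserves the Liouville measure, integrating over $SM$ yields $\mathbb{X}^* = -\mathbb{X}$ on $L^2(SM;\mc E)$; the same holds on $L^2(SM;N\otimes\mc E)$ by \eqref{bfXonE}. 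In the boundary case, the boundary terms produced by these integrations by parts vanish because $u|_{\partial(SM)}=0$, hence $\vd\,^{\mc E}u$ and $\hd\,^{\mc E}u$ also vanish on $\partial(SM)$.

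First I would use $[\mathbb{X},\vd\,^{\mc E}]=-\hd\,^{\mc E}$ (formula \eqref{eq:commXV}) to write $\vd\,^{\mc E}\mathbb{X}u = \mathbb{X}\vd\,^{\mc E}u + \hd\,^{\mc E}u$, so that
\[
\norm{\vd\,^{\mc E}\mathbb{X}u}^2 = \norm{\mathbb{X}\vd\,^{\mc E}u}^2 + 2\,\mathrm{Re}\,(\mathbb{X}\vd\,^{\mc E}u,\hd\,^{\mc E}u) + \norm{\hd\,^{\mc E}u}^2 .
\]
The work is then in evaluating the cross term $a := (\mathbb{X}\vd\,^{\mc E}u,\hd\,^{\mc E}u)$. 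Integrating by parts with $\mathbb{X}^*=-\mathbb{X}$ and applying $[\mathbb{X},\hd\,^{\mc E}]=R\,\vd\,^{\mc E}+F^{\mc E}$ (formula \eqref{eq:commXH}) together with the symmetry of $R$ gives
\[
a = -(\vd\,^{\mc E}u,\hd\,^{\mc E}\mathbb{X}u) - (R\,\vd\,^{\mc E}u,\vd\,^{\mc E}u) - (\vd\,^{\mc E}u,F^{\mc E}u) .
\]
Moving $\hd\,^{\mc E}$ back onto the other factor turns the first term into $(\hdiv\,^{\mc E}\vd\,^{\mc E}u,\mathbb{X}u)$; now \eqref{eq:commdiv} replaces $\hdiv\,^{\mc E}\vd\,^{\mc E}$ by $(d-1)\mathbb{X}+\vdiv\,^{\mc E}\hd\,^{\mc E}$, and one further integration by parts, again using $\vd\,^{\mc E}\mathbb{X}u = \mathbb{X}\vd\,^{\mc E}u+\hd\,^{\mc E}u$, reintroduces a copy of $\bar a$. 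The net outcome is the relation
\[
a + \bar a = (d-1)\norm{\mathbb{X}u}^2 - \norm{\hd\,^{\mc E}u}^2 - (R\,\vd\,^{\mc E}u,\vd\,^{\mc E}u) - (\vd\,^{\mc E}u,F^{\mc E}u) .
\]
Since the left-hand side is real, this also forces $(F^{\mc E}u,\vd\,^{\mc E}u)$ to be real (so no a priori symmetry property of $F^{\mc E}$ is needed), and thus $2\,\mathrm{Re}\,a$ is exactly the right-hand side above. Substituting this into the expansion of $\norm{\vd\,^{\mc E}\mathbb{X}u}^2$, the two copies of $\norm{\hd\,^{\mc E}u}^2$ cancel and the identity of Proposition \ref{prop_pestov} falls out.

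I do not anticipate a genuine obstacle: each step is either an application of Lemma \ref{lemma_basic_commutator_formulas} or an integration by parts, and the features that distinguish this from the scalar case of \cite{PSU_hd} — namely the curvature contribution $F^{\mc E}$ and working over a complex bundle — are absorbed automatically by the commutators and by tracking real parts. The only places that genuinely require care are (i) establishing $\mathbb{X}^*=-\mathbb{X}$ on both $L^2(SM;\mc E)$ and $L^2(SM;N\otimes\mc E)$, which is precisely where unitarity of $\nabla^{\mc E}$ is used, and (ii) verifying that all boundary terms in the integrations by parts vanish under $u|_{\partial(SM)}=0$, which is why that hypothesis is imposed in the boundary case.
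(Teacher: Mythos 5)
Your argument is correct and is exactly the derivation the paper has in mind: the proof in the paper is simply a citation to \cite[Proposition~2.2]{PSU_hd}, and your steps --- expand $\norm{\vd\,^{\mc E}\mathbb{X}u}^2$ via $[\mathbb{X},\vd\,^{\mc E}]=-\hd\,^{\mc E}$, reduce the cross term $a=(\mathbb{X}\vd\,^{\mc E}u,\hd\,^{\mc E}u)$ by integration by parts, $[\mathbb{X},\hd\,^{\mc E}]=R\vd\,^{\mc E}+F^{\mc E}$ and \eqref{eq:commdiv}, then close the loop to solve for $a+\bar a$ --- reproduce that computation with the twisted operators, using $\mathbb{X}^*=-\mathbb{X}$ in place of $X^*=-X$. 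Your observation that the bookkeeping automatically forces $(\vd\,^{\mc E}u,F^{\mc E}u)$ to be real is a nice way to dispense with the a priori discrepancy between $(\vd\,^{\mc E}u,F^{\mc E}u)$ and $(F^{\mc E}u,\vd\,^{\mc E}u)$.

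One small inaccuracy in your treatment of the boundary case: $u|_{\partial(SM)}=0$ does \emph{not} imply $\hd\,^{\mc E}u|_{\partial(SM)}=0$, since the horizontal derivative contains the normal derivative of $u$ at $\partial M$, which generally survives. This is harmless here, because none of the boundary contributions actually require it: each integration by parts either produces a boundary term containing $\vd\,^{\mc E}u$ (which does vanish, because the vertical directions are tangent to the fibers $S_xM\subset\partial(SM)$), or it is a purely vertical integration by parts $\vd\,^{\mc{E}}\leftrightarrow\vdiv\,^{\mc{E}}$ over closed fibers and hence produces no boundary term at all. You should drop the clause asserting that $\hd\,^{\mc E}u$ vanishes on $\partial(SM)$, and instead note that only $\vd\,^{\mc E}u|_{\partial(SM)}=0$ is needed.
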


\vspace{12pt}

\noindent 3.4. {\bf Spherical harmonics decomposition.}
We can use the spherical harmonics decomposition from Section \ref{sec_introduction} and \cite[Section 3]{PSU_hd},
$$
L^2(SM;\mc{E}) = \bigoplus_{m=0}^{\infty} H_m(SM;\mc{E}),
$$
so that any $f \in L^2(SM;\mc{E})$ has the orthogonal decomposition 
$$
f = \sum_{m=0}^{\infty} f_m.
$$
We write $\Omega_m = H_m(SM;\mc{E}) \cap C^{\infty}(SM;\mc{E})$, 
and write $\Delta^{\mc{E}} = -\vdiv\,^{\mc{E}} \vd\, ^{\mc{E}}$ for the vertical Laplacian. Notice that since $(\mc{E}, \nabla^{\mc{E}})$ are pulled back from $M$ to $SM$, we have in a local orthonormal frame $(e_1,\dots,e_n)$ the representation 
\[ \Delta^{\mc{E}} (\sum_{k=1}^nu^k e_k) = \sum_{k=1}^n(\Delta u^k) e_k \]
where $\Delta:=-\vdiv \vd$ is the vertical Laplacian on functions defined in \cite[Section 3]{PSU_hd}.
Then $\Delta^{\mc{E}} u = m(m+d-2)u$ for $u \in \Omega_m$. 
We have the following commutator formula, whose proof is identical to that of \cite[Lemma 3.6]{PSU_hd}.

\begin{Lemma} \label{lemma_xa_delta_commutator}
The following commutator formula holds:
\[[\mathbb{X},\Delta^{\mc{E}}]=2\, \vdiv\, ^{\mc{E}}\hd\, ^{\mc{E}}+(d-1)\mathbb{X}.\]
\end{Lemma}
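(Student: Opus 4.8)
The plan is to mimic the proof of \cite[Lemma 3.6]{PSU_hd} closely, reducing the identity to a local frame computation combined with the commutator formulas already established in Lemma \ref{lemma_basic_commutator_formulas}. First I would work in a trivializing neighborhood $U \subset M$ with a local orthonormal frame $(e_1,\dots,e_n)$ of $\mc{E}$, writing the connection as $d+A$ and pulling everything back to $SU$. Since $\Delta^{\mc{E}}$ acts componentwise as the ordinary vertical Laplacian $\Delta = -\vdiv\vd$ (as noted just before the statement), and since $\mathbb{X}(\sum u^k e_k) = \sum(Xu^k + \sum_l A^k_l u^l)e_k$, expanding $[\mathbb{X},\Delta^{\mc{E}}](\sum u^k e_k)$ produces two kinds of terms: the componentwise commutator $[X,\Delta]u^k$, which by \cite[Lemma 3.6]{PSU_hd} equals $(2\,\vdiv\hd + (d-1)X)u^k$, plus cross terms coming from the connection $1$-form, namely terms involving $\Delta(A^k_l u^l) - A^k_l \Delta u^l$.

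The main work is to check that these extra connection-dependent terms reassemble into $2\,\vdiv\,^{\mc{E}}\hd\,^{\mc{E}} + (d-1)\mathbb{X}$ minus the componentwise piece already accounted for — in other words, that the ``twisting'' is consistent on both sides. Concretely, one computes $\vdiv\,^{\mc{E}}\hd\,^{\mc{E}}(\sum u^k e_k)$ in the local frame using the formula $\hd\,^{\mc{E}}(\sum u^k e_k) = \sum_k(\hd u^k + \sum_l(\vd A^k_l)u^l)\otimes e_k$ derived inside the proof of Lemma \ref{lemma_basic_commutator_formulas}, and the analogous expansion of $\Delta^{\mc{E}}\mathbb{X}$ and $\mathbb{X}\Delta^{\mc{E}}$; the connection-induced discrepancies should match term by term. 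Here one uses the algebraic identities $-\vdiv\vd a = (d-1)a$ for $a$ corresponding to a $1$-form, and the fact that $A^k_l$ has degree $1$ in $v$, exactly as in the proof of \eqref{eq:commdiv}.

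Alternatively — and this may be cleaner to present — I would note that $\Delta^{\mc{E}} = -\vdiv\,^{\mc{E}}\vd\,^{\mc{E}}$ and expand the commutator using the two commutator formulas $[\mathbb{X},\vd\,^{\mc{E}}] = -\hd\,^{\mc{E}}$ and $[\mathbb{X},\vdiv\,^{\mc{E}}] = -\hdiv\,^{\mc{E}}$ from Lemma \ref{lemma_basic_commutator_formulas}, together with \eqref{eq:commdiv}. Indeed,
\[
[\mathbb{X},\Delta^{\mc{E}}] = -[\mathbb{X},\vdiv\,^{\mc{E}}]\vd\,^{\mc{E}} - \vdiv\,^{\mc{E}}[\mathbb{X},\vd\,^{\mc{E}}] = \hdiv\,^{\mc{E}}\vd\,^{\mc{E}} + \vdiv\,^{\mc{E}}\hd\,^{\mc{E}},
\]
and then substituting $\hdiv\,^{\mc{E}}\vd\,^{\mc{E}} = \vdiv\,^{\mc{E}}\hd\,^{\mc{E}} + (d-1)\mathbb{X}$ from \eqref{eq:commdiv} gives the claim directly. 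The only obstacle here is bookkeeping: one must verify that \eqref{eq:commdiv}, which is stated on $C^{\infty}(SM;\mc{E})$, composes correctly with $\vd\,^{\mc{E}}$ and that the adjoint commutator $[\mathbb{X},\vdiv\,^{\mc{E}}] = -\hdiv\,^{\mc{E}}$ is applied on the right space $C^{\infty}(SM;N\otimes\mc{E})$ — both of which are exactly as recorded in Lemma \ref{lemma_basic_commutator_formulas}. I expect this second route to be essentially immediate given that lemma, so the ``hard part'' is really just confirming there are no sign errors or domain mismatches in chaining the commutators; there is no genuine analytic difficulty beyond what is already in \cite[Lemma 3.6]{PSU_hd}.
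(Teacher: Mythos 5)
Your second route is correct and is in fact what the paper does: the paper states that the proof is identical to \cite[Lemma 3.6]{PSU_hd}, which is exactly the short algebraic chain you wrote out — expand $[\mathbb{X},-\vdiv\,^{\mc{E}}\vd\,^{\mc{E}}]$ by the Leibniz rule for commutators, substitute $[\mathbb{X},\vd\,^{\mc{E}}]=-\hd\,^{\mc{E}}$ and $[\mathbb{X},\vdiv\,^{\mc{E}}]=-\hdiv\,^{\mc{E}}$, and then replace $\hdiv\,^{\mc{E}}\vd\,^{\mc{E}}$ using \eqref{eq:commdiv}. The domain bookkeeping you were worried about is fine: $\vd\,^{\mc{E}}$ lands in $C^{\infty}(SM;N\otimes\mc{E})$, which is precisely where $[\mathbb{X},\vdiv\,^{\mc{E}}]=-\hdiv\,^{\mc{E}}$ applies, and \eqref{eq:commdiv} is an identity of operators on $C^{\infty}(SM;\mc{E})$, so all compositions are well defined. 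Your first (local-frame) route would also work but is unnecessary given that Lemma \ref{lemma_basic_commutator_formulas} already packages all the connection-dependent corrections.
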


Recall that if $a$ and $b$ are two spherical harmonics in $S^{d-1}$, where $a \in H_1(S^{d-1})$ and $b \in H_m(S^{d-1})$, then the product $ab$ is in $H_{m+1}(S^{d-1}) \oplus H_{m-1}(S^{d-1})$. Thus $\mathbb{X}$ splits as $\mathbb{X} = \mathbb{X}_+ + \mathbb{X}_-$ where 
$$
\mathbb{X}_{\pm}: \Omega_m \to \Omega_{m \pm 1}.
$$
Since the connection is Hermitian, we have $\mathbb{X}_{+}^*=-\mathbb{X}_-$.

The following special case of the Pestov identity with a connection (Proposition \ref{prop_pestov}) is very useful for studying individual Fourier coefficients of solutions of the transport equation. The proof is the same as that of \cite[Proposition 3.5]{PSU_hd}.

\begin{Proposition} \label{prop_pestov_omegam}
Let $(M,g)$ be a compact $d$-dimensional Riemannian manifold with or without boundary. If the Pestov identity with connection is applied to functions in $\Omega_m$, one obtains the identity 
\begin{equation*}
(2m+d-3)\norm{\mathbb{X}_{-}u}^{2}+\norm{\hd\, ^{\mc{E}} u}^{2}-(R\vd\,^{\mc{E}} u,\vd\, ^{\mc{E}} u)-
(F^{\mc{E}}u,\vd\, ^{\mc{E}} u)=(2m+d-1)\norm{\mathbb{X}_{+}u}^{2}
\end{equation*}
which is valid for any $u\in\Omega_m$ (with $u|_{\partial(SM)}=0$ in the boundary case).
\end{Proposition}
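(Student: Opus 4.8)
The plan is to derive Proposition~\ref{prop_pestov_omegam} from the general Pestov identity of Proposition~\ref{prop_pestov} by restricting to $u\in\Omega_m$ and simplifying each term using the spherical harmonics calculus developed in Section~3.4. First I would write $\mathbb{X}u = \mathbb{X}_+u + \mathbb{X}_-u$ with $\mathbb{X}_+u\in\Omega_{m+1}$ and $\mathbb{X}_-u\in\Omega_{m-1}$; since $\vd\,^{\mc{E}}$ raises or does not change degree in a controlled way (indeed $\Delta^{\mc{E}}=-\vdiv^{\mc{E}}\vd^{\mc{E}}$ acts as $m(m+d-2)$ on $\Omega_m$), the term $\norm{\vd\,^{\mc{E}}\mathbb{X}u}^2$ splits orthogonally as $\norm{\vd\,^{\mc{E}}\mathbb{X}_+u}^2+\norm{\vd\,^{\mc{E}}\mathbb{X}_-u}^2$, and each piece can be rewritten via $\norm{\vd\,^{\mc{E}}w}^2 = (\Delta^{\mc{E}}w,w) = k(k+d-2)\norm{w}^2$ for $w\in\Omega_k$. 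Thus $\norm{\vd\,^{\mc{E}}\mathbb{X}u}^2 = (m+1)(m+d-1)\norm{\mathbb{X}_+u}^2 + (m-1)(m+d-3)\norm{\mathbb{X}_-u}^2$.

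Next I would handle $\norm{\mathbb{X}\vd\,^{\mc{E}}u}^2$. The idea is to use the commutator formula from Lemma~\ref{lemma_xa_delta_commutator}, namely $[\mathbb{X},\Delta^{\mc{E}}] = 2\,\vdiv^{\mc{E}}\hd^{\mc{E}} + (d-1)\mathbb{X}$, together with the identity $\norm{\mathbb{X}\vd\,^{\mc{E}}u}^2 = (\mathbb{X}^*\mathbb{X}\vd\,^{\mc{E}}u,\vd\,^{\mc{E}}u)$ and integration by parts to convert this into $(\Delta^{\mc{E}}\mathbb{X}u,\mathbb{X}u)$-type terms plus correction terms involving $\hd\,^{\mc{E}}u$. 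Concretely, one wants to show $\norm{\mathbb{X}\vd\,^{\mc{E}}u}^2 = \norm{\vd\,^{\mc{E}}\mathbb{X}u}^2 - \norm{\hd\,^{\mc{E}}u}^2 + (d-1)\norm{\mathbb{X}u}^2$ or some such relation (this is exactly the algebra carried out in the scalar case \cite[Proposition 3.5]{PSU_hd}), using also that $\norm{\mathbb{X}u}^2 = \norm{\mathbb{X}_+u}^2+\norm{\mathbb{X}_-u}^2$. Then $(R\vd^{\mc{E}}u,\vd^{\mc{E}}u)$ and $(F^{\mc{E}}u,\vd^{\mc{E}}u)$ are carried along unchanged. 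Substituting all of these into the Pestov identity and collecting the coefficients of $\norm{\mathbb{X}_+u}^2$ and $\norm{\mathbb{X}_-u}^2$ should produce, after arithmetic simplification, the stated identity $(2m+d-3)\norm{\mathbb{X}_-u}^2 + \norm{\hd^{\mc{E}}u}^2 - (R\vd^{\mc{E}}u,\vd^{\mc{E}}u) - (F^{\mc{E}}u,\vd^{\mc{E}}u) = (2m+d-1)\norm{\mathbb{X}_+u}^2$.

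Since the excerpt says ``The proof is the same as that of \cite[Proposition 3.5]{PSU_hd}'', the cleanest writeup is simply to point to that reference: the only new ingredient relative to \cite{PSU_hd} is the presence of the extra term $(F^{\mc{E}}u,\vd^{\mc{E}}u)$ in the Pestov identity (Proposition~\ref{prop_pestov}), which is inert under the spherical-harmonics bookkeeping because $F^{\mc{E}}$ is algebraic (degree-preserving up to the $N$ factor) and gets carried through verbatim. So I would verify that none of the manipulations in \cite[Proposition 3.5]{PSU_hd} touch the curvature-of-the-connection term, confirm that $[\mathbb{X},\Delta^{\mc{E}}]$ has the same form as $[X,\Delta]$ (which is Lemma~\ref{lemma_xa_delta_commutator}), and then invoke the reference.

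The main obstacle is bookkeeping rather than conceptual: one must be careful that the commutator $[\mathbb{X},\vd^{\mc{E}}]=-\hd^{\mc{E}}$ and $[\mathbb{X},\hd^{\mc{E}}]=R\vd^{\mc{E}}+F^{\mc{E}}$ from Lemma~\ref{lemma_basic_commutator_formulas} are used with the correct signs and that the $F^{\mc{E}}$ term from $[\mathbb{X},\hd^{\mc{E}}]$ does not secretly contribute an additional $(F^{\mc{E}})^*$-type term when one integrates by parts—checking that these cancel or are absorbed correctly is the delicate point, but it is exactly what the scalar proof in \cite{PSU_hd} does mutatis mutandis, so no genuinely new difficulty arises.
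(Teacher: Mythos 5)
Your proposal follows the paper's approach exactly: the paper's own proof is just the one-liner ``The proof is the same as that of \cite[Proposition 3.5]{PSU_hd}'', and your observation that the only novelty is the extra term $(F^{\mc{E}}u,\vd\,^{\mc{E}}u)$, which rides along inertly through the spherical-harmonics bookkeeping, is precisely the point.

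One concrete heads-up on the intermediate step you guessed at: the relation $\norm{\mathbb{X}\vd\,^{\mc{E}}u}^2 = \norm{\vd\,^{\mc{E}}\mathbb{X}u}^2 - \norm{\hd\,^{\mc{E}}u}^2 + (d-1)\norm{\mathbb{X}u}^2$ is not correct (neither the sign in front of $\norm{\hd\,^{\mc{E}}u}^2$ nor the absence of a cross term). What actually happens is that you use $[\mathbb{X},\vd\,^{\mc{E}}]=-\hd\,^{\mc{E}}$ to write $\mathbb{X}\vd\,^{\mc{E}}u = \vd\,^{\mc{E}}\mathbb{X}u - \hd\,^{\mc{E}}u$, so
\[
\norm{\mathbb{X}\vd\,^{\mc{E}}u}^2 = \norm{\vd\,^{\mc{E}}\mathbb{X}u}^2 + \norm{\hd\,^{\mc{E}}u}^2 - 2\,\mathrm{Re}\,(\vd\,^{\mc{E}}\mathbb{X}u,\hd\,^{\mc{E}}u),
\]
and then the cross term is evaluated by Lemma \ref{lemma_nablah_nablav_innerproduct} (which in turn rests on the $[\mathbb{X},\Delta^{\mc{E}}]$ commutator): for $u\in\Omega_m$ one finds $(\hd\,^{\mc{E}}u,\vd\,^{\mc{E}}\mathbb{X}u) = (m+d-1)\norm{\mathbb{X}_+u}^2 - (m-1)\norm{\mathbb{X}_-u}^2$. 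Plugging this into the general Pestov identity, the $\norm{\vd\,^{\mc{E}}\mathbb{X}u}^2$ cancels from both sides, and $-2(m+d-1)+(d-1) = -(2m+d-1)$ and $2(m-1)+(d-1) = 2m+d-3$ produce exactly the stated coefficients. You hedged appropriately by deferring to \cite{PSU_hd}, so this does not undermine the proposal, but it's worth correcting the guessed formula.
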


\vspace{12pt}

\noindent 3.5. {\bf Lower bounds.}
The following results extend \cite[Lemmas 4.3 and 4.4]{PSU_hd} to the case where a Hermitian 
connection is present. The proofs are identical, but we repeat them for completeness.

\begin{Lemma} \label{lemma_xvu_lowerbound}
If $u \in C^{\infty}(SM;\mc{E})$ and $u = \sum_{l=m}^{\infty} u_l$ with $u_l\in \Omega_l$, then 
$$
\norm{\mathbb{X} \vd\, ^{\mc{E}} u}^2 \geq \left\{ \begin{array}{ll} \frac{(m-1)(m+d-2)^2}{m+d-3} \norm{(\mathbb{X} u)_{m-1}}^2 + \frac{m(m+d-1)^2}{m+d-2} \norm{(\mathbb{X} u)_m}^2, & m \geq 2, \\[5pt]
\frac{d^{2}}{d-1}\norm{(\mathbb{X} u)_{1}}^{2}, & m = 1. \end{array} \right.
$$
If $u \in \Omega_m$, we have 
$$
\norm{\mathbb{X} \vd\, ^{\mc{E}} u}^2 \geq \left\{ \begin{array}{ll} \frac{(m-1)(m+d-2)^2}{m+d-3} \norm{\mathbb{X}_- u}^2 + 
\frac{m^2(m+d-1)}{m+1} \norm{\mathbb{X}_+ u}^2, & m \geq 2, \\[5pt] \frac{d}{2} \norm{\mathbb{X}_+ u}^2, & m = 1. \end{array} \right.
$$
\end{Lemma}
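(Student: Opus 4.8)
The plan is to expand $\mathbb{X}\vd\,^{\mc{E}} u$ into spherical harmonic components and use the Pestov-type relations already established, exactly as in \cite[Lemmas 4.3 and 4.4]{PSU_hd}. First I would recall that $\vd\,^{\mc{E}}$ raises no degree and in fact maps $\Omega_l$ into sections of $N\otimes\mc{E}$ whose spherical harmonic content lies in degrees $l-1$ and $l+1$; combined with $\mathbb{X}=\mathbb{X}_++\mathbb{X}_-$, the component $(\mathbb{X}\vd\,^{\mc{E}} u)_k$ receives contributions only from $u_{k-1}$, $u_k$, $u_{k+1}$ (with the appropriate shifts). Using the commutator formula $[\mathbb{X},\vd\,^{\mc{E}}]=-\hd\,^{\mc{E}}$ from \eqref{eq:commXV}, one has $\mathbb{X}\vd\,^{\mc{E}} u=\vd\,^{\mc{E}}\mathbb{X} u-\hd\,^{\mc{E}} u$, which lets one trade vertical derivatives of $\mathbb{X} u$ for the quantity of interest while controlling the horizontal term.

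The key computation is the same Fourier-coefficient bookkeeping as in \cite{PSU_hd}: one shows that for a fixed target degree $k$, $\norm{(\mathbb{X}\vd\,^{\mc{E}} u)_k}^2$ is bounded below by a positive multiple of $\norm{(\mathbb{X} u)_{k}}^2$, the multiplicity constant being exactly the spectral gap of the relevant operator on degree-$k$ spherical harmonics. Concretely, writing $v=\vd\,^{\mc{E}}u$ and using that $\vdiv\,^{\mc{E}}\vd\,^{\mc{E}}=\Delta^{\mc{E}}$ acts as $l(l+d-2)$ on $\Omega_l$ together with the orthogonality of the $H_m$, one obtains the stated coefficients $\tfrac{(m-1)(m+d-2)^2}{m+d-3}$ and $\tfrac{m(m+d-1)^2}{m+d-2}$ (respectively $\tfrac{d^2}{d-1}$ when $m=1$, where the lower index $m-1=0$ simply drops out). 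For the second assertion, restricting to $u\in\Omega_m$ so that $\mathbb{X} u=\mathbb{X}_-u+\mathbb{X}_+u$ with $\mathbb{X}_\pm u\in\Omega_{m\pm1}$, one applies the first inequality with the two surviving pieces: $(\mathbb{X} u)_{m-1}=\mathbb{X}_-u$ and $(\mathbb{X} u)_{m+1}=\mathbb{X}_+u$. The coefficient of $\norm{\mathbb{X}_+u}^2$ then reads $\tfrac{(m+1)(m+d-1)^2}{m+d-2}$ evaluated at shifted index; a short simplification using $\mathbb{X}_+^*=-\mathbb{X}_-$ (valid since the connection is Hermitian, as noted before Proposition \ref{prop_pestov_omegam}) and the norm identity from the Pestov identity on $\Omega_m$ reduces this to $\tfrac{m^2(m+d-1)}{m+1}$, and to $\tfrac{d}{2}$ when $m=1$.

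The main obstacle, and the only place the connection genuinely intervenes, is making sure that the spherical-harmonic splitting of $\vd\,^{\mc{E}}$ and the action of $\mathbb{X}_\pm$ on $N\otimes\mc{E}$ behave precisely as in the scalar case. Since $\vd\,^{\mc{E}}$ is \emph{independent of the connection} (this is recorded explicitly in the proof of Lemma \ref{lemma_basic_commutator_formulas}: $\vd\,^{\mc{E}}(\sum u^k e_k)=\sum(\vd u^k)\otimes e_k$) and $\mathbb{X}$ on $N\otimes\mc{E}$ is defined by the Leibniz rule \eqref{bfXonE} with $X$ acting on $\mc{Z}$ exactly as before, the degree-shifting properties are inherited verbatim from \cite{PSU_hd} coefficient by coefficient in any local orthonormal frame $(e_1,\dots,e_n)$. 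Hence the entire argument is the scalar one applied componentwise, and the constants are identical; I would simply write ``the proof is identical to \cite[Lemmas 4.3 and 4.4]{PSU_hd}'' after pointing out this frame-wise reduction, and then reproduce the short Fourier computation for completeness.
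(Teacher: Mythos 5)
Your overall plan is sound and matches the paper's strategy at the first step: use $[\mathbb{X},\vd\,^{\mc{E}}]=-\hd\,^{\mc{E}}$ to write $\mathbb{X}\vd\,^{\mc{E}}u=\vd\,^{\mc{E}}\mathbb{X}u-\hd\,^{\mc{E}}u$, observe that $\vd\,^{\mc{E}}$ is connection-independent in a local orthonormal frame, and invoke the same Fourier bookkeeping as \cite[Lemmas 4.3--4.4]{PSU_hd}. But there are two concrete problems.

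First, you never name the decisive tool. ``Fourier-coefficient bookkeeping'' and ``spectral gap'' gesture at the mechanism, but the step that actually produces the constants is Lemma \ref{lemma_nablah_nablav_innerproduct}: it writes $\hd\,^{\mc{E}}u=\vd\,^{\mc{E}}\bigl[\sum_l\bigl(\tfrac{1}{l}\mathbb{X}_+u_{l-1}-\tfrac{1}{l+d-2}\mathbb{X}_-u_{l+1}\bigr)\bigr]+Z(u)$ with $\vdiv\,^{\mc{E}}Z(u)=0$. Substituting and grouping, $\mathbb{X}\vd\,^{\mc{E}}u=\vd\,^{\mc{E}}\bigl[(1+\tfrac{1}{m+d-3})(\mathbb{X}u)_{m-1}+(1+\tfrac{1}{m+d-2})(\mathbb{X}u)_m+\sum_{l\ge m+1}w_l\bigr]+Z$, and then one squares using the orthogonality of the $\Omega_l$, the orthogonality of $\vd\,^{\mc{E}}(\cdot)$ to the divergence-free $Z$, and $\norm{\vd\,^{\mc{E}}w_l}^2=l(l+d-2)\norm{w_l}^2$. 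Without this explicit decomposition the constants $\tfrac{(m-1)(m+d-2)^2}{m+d-3}$ and $\tfrac{m(m+d-1)^2}{m+d-2}$ do not appear.

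Second, your derivation of the $u\in\Omega_m$ case does not work as stated. The first inequality involves only $\norm{(\mathbb{X}u)_{m-1}}^2$ and $\norm{(\mathbb{X}u)_m}^2$; for $u\in\Omega_m$ one has $(\mathbb{X}u)_m=0$, so naively applying it gives no $\norm{\mathbb{X}_+u}^2$ term at all. The phantom coefficient $\tfrac{(m+1)(m+d-1)^2}{m+d-2}$ ``at shifted index'' is not what arises, and it is not equal (nor can it be ``simplified'' via $\mathbb{X}_+^*=-\mathbb{X}_-$ and the Pestov identity) to $\tfrac{m^2(m+d-1)}{m+1}$. The correct route is to run the same decomposition for $u\in\Omega_m$ and \emph{keep} the $l=m+1$ remainder term instead of discarding it: one finds $w_{m+1}=\mathbb{X}_+u-\tfrac{1}{m+1}\mathbb{X}_+u=\tfrac{m}{m+1}\mathbb{X}_+u$, and hence $\norm{\vd\,^{\mc{E}}w_{m+1}}^2=(m+1)(m+d-1)\tfrac{m^2}{(m+1)^2}\norm{\mathbb{X}_+u}^2=\tfrac{m^2(m+d-1)}{m+1}\norm{\mathbb{X}_+u}^2$, which is precisely the extra term in the second assertion. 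The $m=1$ case gives $w_2=\tfrac12\mathbb{X}_+u$ and $\norm{\vd\,^{\mc{E}}w_2}^2=\tfrac{d}{2}\norm{\mathbb{X}_+u}^2$, while $\mathbb{X}_-u\in\Omega_0$ contributes nothing since $\vd\,^{\mc{E}}$ kills degree $0$.
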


\begin{Lemma} \label{lemma_nablah_nablav_innerproduct}
If $u \in C^{\infty}(SM;\mc{E})$ and $w_l \in \Omega_l$, then 
$$
(\hd\, ^{\mc{E}}  u, \vd\, ^{\mc{E}}w_l) = ( (l+d-2) \mathbb{X}_+ u_{l-1} - l \mathbb{X}_- u_{l+1}, w_l).
$$
As a consequence, for any $u \in C^{\infty}(SM; \mc{E})$ we have the decomposition 
$$
\hd\, ^{\mc{E}}  u = \vd\, ^{\mc{E}}\left[ \sum_{l=1}^{\infty} \left( \frac{1}{l} \mathbb{X}_+ u_{l-1} - \frac{1}{l+d-2} \mathbb{X}_- u_{l+1} \right) \right] + Z(u)
$$
where $Z(u) \in C^{\infty}(SM;N\otimes \mc{E})$ satisfies $\vdiv\, ^{\mc{E}}Z(u) = 0$.
\end{Lemma}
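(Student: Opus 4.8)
The plan is to prove Lemma \ref{lemma_nablah_nablav_innerproduct} by reducing the inner product identity to the scalar case handled in \cite[Section 3]{PSU_hd} (or rather, re-deriving it via the commutator formula from Lemma \ref{lemma_basic_commutator_formulas}), and then to obtain the decomposition as a formal consequence. First I would compute $(\hd\, ^{\mc{E}} u, \vd\, ^{\mc{E}} w_l)$ by moving $\vd\, ^{\mc{E}}$ to the other side, getting $-(\vdiv\, ^{\mc{E}} \hd\, ^{\mc{E}} u, w_l)$. Using the commutator identity \eqref{eq:commdiv}, namely $\hdiv\, ^{\mc{E}} \vd\, ^{\mc{E}} - \vdiv\, ^{\mc{E}} \hd\, ^{\mc{E}} = (d-1)\mathbb{X}$, together with Lemma \ref{lemma_xa_delta_commutator} which gives $[\mathbb{X}, \Delta^{\mc{E}}] = 2\vdiv\, ^{\mc{E}} \hd\, ^{\mc{E}} + (d-1)\mathbb{X}$, I can express $\vdiv\, ^{\mc{E}} \hd\, ^{\mc{E}}$ purely in terms of $\mathbb{X}$ and $\Delta^{\mc{E}}$: explicitly $\vdiv\, ^{\mc{E}} \hd\, ^{\mc{E}} = \frac{1}{2}([\mathbb{X}, \Delta^{\mc{E}}] - (d-1)\mathbb{X})$. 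Then acting on the component $u_{l-1} \in \Omega_{l-1}$ (which contributes to $\Omega_l$ via $\mathbb{X}_+$) and on $u_{l+1} \in \Omega_{l+1}$ (which contributes via $\mathbb{X}_-$), using $\Delta^{\mc{E}} = j(j+d-2)$ on $\Omega_j$, a short computation with the eigenvalues gives the coefficients $-(l+d-2)$ and $l$ respectively, so that $-\vdiv\, ^{\mc{E}} \hd\, ^{\mc{E}} u$ projected onto $\Omega_l$ equals $(l+d-2)\mathbb{X}_+ u_{l-1} - l\mathbb{X}_- u_{l+1}$, which yields the claimed inner product formula.

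For the decomposition statement, I would define $P(u) := \sum_{l=1}^{\infty}\left( \frac{1}{l}\mathbb{X}_+ u_{l-1} - \frac{1}{l+d-2}\mathbb{X}_- u_{l+1}\right)$ and set $Z(u) := \hd\, ^{\mc{E}} u - \vd\, ^{\mc{E}} P(u)$; the task is to show $\vdiv\, ^{\mc{E}} Z(u) = 0$. Since $\vdiv\, ^{\mc{E}} \vd\, ^{\mc{E}} = -\Delta^{\mc{E}}$, and the $l$-th term of $P(u)$ lies in $\Omega_l$ where $\Delta^{\mc{E}}$ acts as multiplication by $l(l+d-2)$, we get $\vdiv\, ^{\mc{E}} \vd\, ^{\mc{E}} P(u) = -\sum_l l(l+d-2)\left(\frac{1}{l}\mathbb{X}_+ u_{l-1} - \frac{1}{l+d-2}\mathbb{X}_- u_{l+1}\right) = -\sum_l \left((l+d-2)\mathbb{X}_+ u_{l-1} - l\mathbb{X}_- u_{l+1}\right)$. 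On the other hand, by the inner product formula just established (tested against all $w_l \in \Omega_l$), the $\Omega_l$-component of $-\vdiv\, ^{\mc{E}} \hd\, ^{\mc{E}} u$ is exactly $(l+d-2)\mathbb{X}_+ u_{l-1} - l\mathbb{X}_- u_{l+1}$. Comparing, $\vdiv\, ^{\mc{E}}(\hd\, ^{\mc{E}} u - \vd\, ^{\mc{E}} P(u)) = 0$ term by term, establishing the claim.

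I would also note the convergence/regularity bookkeeping: one should check that $P(u) \in C^{\infty}(SM;\mc{E})$ and $Z(u) \in C^{\infty}(SM; N\otimes \mc{E})$, which follows since the Fourier coefficients $u_l$ of a smooth section decay rapidly in $l$ (in the appropriate Sobolev norms) and the operators $\mathbb{X}_{\pm}$ are bounded between fixed-regularity spaces with at most polynomial growth in $l$ (as in \cite[Section 3]{PSU_hd}), so the series for $P(u)$ converges in $C^{\infty}$; the coefficients $\frac{1}{l}$ and $\frac{1}{l+d-2}$ only help.

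The main obstacle — really the only non-routine point — is organizing the eigenvalue bookkeeping cleanly: making sure the split $\mathbb{X} = \mathbb{X}_+ + \mathbb{X}_-$ interacts correctly with $[\mathbb{X}, \Delta^{\mc{E}}]$ when restricted to each $\Omega_l$, i.e. that when one applies $[\mathbb{X}, \Delta^{\mc{E}}] = \mathbb{X}\Delta^{\mc{E}} - \Delta^{\mc{E}}\mathbb{X}$ to $u_{l-1}$ the $\Omega_l$-output picks up the factor $(l(l+d-2) - (l-1)(l+d-3))$ while for $u_{l+1}$ it picks up $(l(l+d-2) - (l+1)(l+d-1))$, and that these combine with the $-(d-1)\mathbb{X}$ term to give precisely the stated coefficients. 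Everything else is a direct transcription of the scalar argument of \cite{PSU_hd}, with the connection entering only through the (already established) fact that the same commutator formulas hold for $\mathbb{X}$, $\hd\, ^{\mc{E}}$, $\vd\, ^{\mc{E}}$, $\Delta^{\mc{E}}$ as in the scalar case — the curvature term $F^{\mc{E}}$ does not appear here because it enters $[\mathbb{X},\hd\, ^{\mc{E}}]$ but not the combination $\vdiv\, ^{\mc{E}}\hd\, ^{\mc{E}}$ used above.
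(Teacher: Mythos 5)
Your approach is essentially identical to the paper's: both reduce $(\hd\,^{\mc{E}}u,\vd\,^{\mc{E}}w_l)=-(\vdiv\,^{\mc{E}}\hd\,^{\mc{E}}u,w_l)$ to $[\mathbb{X},\Delta^{\mc{E}}]$ via Lemma \ref{lemma_xa_delta_commutator}, extract the $\Omega_l$-contributions of $u_{l-1}$ and $u_{l+1}$ by eigenvalue bookkeeping, and deduce the decomposition by applying $-\Delta^{\mc{E}}$ to each term of the series and comparing. One small slip in your closing paragraph: with $[\mathbb{X},\Delta^{\mc{E}}]=\mathbb{X}\Delta^{\mc{E}}-\Delta^{\mc{E}}\mathbb{X}$ the $\Omega_l$-output on $u_{l-1}$ carries the factor $(l-1)(l+d-3)-l(l+d-2)=-(2l+d-3)$ (and similarly $(l+1)(l+d-1)-l(l+d-2)=2l+d-1$ for $u_{l+1}$), which is the negative of what you wrote; your stated final coefficients $(l+d-2)$ and $-l$ are nevertheless the correct ones, so this is only a sign typo in the aside, not in the argument.
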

\begin{proof}[Proof of Lemma \ref{lemma_nablah_nablav_innerproduct}]
By Lemma \ref{lemma_xa_delta_commutator}, 
\begin{align*}
 &(\hd\, ^{\mc{E}}  u, \vd\, ^{\mc{E}}w_l) = - (\vdiv\, ^{\mc{E}} \hd\, ^{\mc{E}}  u, w_l) = -\frac{1}{2} ([\mathbb{X}, \Delta^{\mc{E}}] u, w_l) + \frac{d-1}{2} (\mathbb{X} u, w_l) \\
 &= -\frac{1}{2} ([\mathbb{X}_+,  \Delta^{\mc{E}}] u + [\mathbb{X}_-,  \Delta^{\mc{E}}] u, w_l) + \frac{d-1}{2} (\mathbb{X} u, w_l) \\
 &= -\frac{1}{2} ([\mathbb{X}_+,  \Delta^{\mc{E}}] u_{l-1} + [\mathbb{X}_-,  \Delta^{\mc{E}}] u_{l+1}, w_l) + \frac{d-1}{2} (\mathbb{X}_+ u_{l-1} + \mathbb{X}_- u_{l+1}, w_l) \\
 &= \left( \frac{2l+d-3}{2} \mathbb{X}_+ u_{l-1} - \frac{2l+d-1}{2} \mathbb{X}_- u_{l+1}, w_l \right) \\
 & \qquad + \frac{d-1}{2} (\mathbb{X}_+ u_{l-1} + \mathbb{X}_- u_{l+1}, w_l)
\end{align*}
which proves the first claim. For the second one, we note that 
\[\begin{split}
(\hd\, ^{\mc{E}}  u, \vd\, ^{\mc{E}}w) = & \sum_{l=1}^{\infty} ( (l+d-2) \mathbb{X}_+ u_{l-1} - l \mathbb{X}_- u_{l+1}, w_l) \\
 = & \sum_{l=1}^{\infty} \frac{1}{l(l+d-2)} ( \vd\, ^{\mc{E}}\left[ (l+d-2) \mathbb{X}_+ u_{l-1} - l \mathbb{X}_- u_{l+1} \right], \vd\, ^{\mc{E}}w_l)
\end{split}\]
so 
$$
(\hd\, ^{\mc{E}}  u - \vd\, ^{\mc{E}}\left[ \sum_{l=1}^{\infty} \left( \frac{1}{l} \mathbb{X}_+ u_{l-1} - \frac{1}{l+d-2} \mathbb{X}_- u_{l+1} \right) \right], \vd\, ^{\mc{E}}w) = 0
$$
for all $w \in C^{\infty}(SM;\mc{E})$.
\end{proof}

\begin{proof}[Proof of Lemma \ref{lemma_xvu_lowerbound}]
Let $u = \sum_{l=m}^{\infty} u_l$ with $m \geq 2$. First note that 
$$
\norm{\mathbb{X} \vd\, ^{\mc{E}}u}^2 = \norm{\vd\, ^{\mc{E}}\mathbb{X} u - \hd\, ^{\mc{E}}  u}^2.
$$
We use the decomposition in Lemma \ref{lemma_nablah_nablav_innerproduct}, which implies that 
\begin{multline*}
\vd\, ^{\mc{E}}\mathbb{X} u - \hd\, ^{\mc{E}}  u = \\
 \vd\, ^{\mc{E}}\Bigg[ \left( 1 + \frac{1}{m+d-3} \right) (\mathbb{X} u)_{m-1} + \left( 1 + \frac{1}{m+d-2} \right) (\mathbb{X} u)_m  + \sum_{l=m+1}^{\infty} w_l \Bigg] + Z
\end{multline*}
where $w_l \in \Omega_l$ for $l \geq m+1$ are given by 
$$
w_l = (\mathbb{X} u)_l - \frac{1}{l} \mathbb{X}_+ u_{l-1} + \frac{1}{l+d-2} \mathbb{X}_- u_{l+1}
$$
and where $Z \in C^{\infty}(SM;N\otimes \mc{E})$ satisfies $\vdiv\, ^{\mc{E}}Z = 0$. Taking the $L^2$ norm squared, and noting that the term $\vd\,^{\mc{E}}(\,\cdot\,)$ is orthogonal to the $\vdiv\,^{\mc{E}}$-free vector field $Z$, gives 
\[\begin{split}
\norm{\mathbb{X} \vd\, ^{\mc{E}}u}^2 = & \frac{(m-1)(m+d-2)^2}{m+d-3} \norm{(\mathbb{X} u)_{m-1}}^2 + \frac{m(m+d-1)^2}{m+d-2} \norm{(\mathbb{X} u)_m}^2 \\
& + \sum_{l=m+1}^{\infty} \norm{\vd\, ^{\mc{E}}w_l}^2 + \norm{Z}^2.  
\end{split}\]
The claims for $m=1$ or for $u \in \Omega_m$ are essentially the same.
\end{proof}

\vspace{12pt}

\noindent 3.6. {\bf Identification with trace free symmetric tensors and conformal invariance.}\label{Identifi} 
For $\mc{E}$ being the trivial complex line bundle, there is an identification of $\Omega_m$ with the smooth {\it trace free} symmetric tensor fields of degree $m$ on $M$ which we denote by $\Theta_m$ \cite{DS,GK2}.
More precisely, as in \cite{DS} we start with $\lambda:C^{\infty}(M; \otimes^m_ST^*M)\to C^{\infty}(SM)$ 
being the map which takes a symmetric  $m$-tensor $f$ and maps it into the section 
$SM\ni (x,v)\mapsto f_{x}(v,\dots,v)$.
The map $\lambda$ turns out to be an isomorphism between $\Omega_m$
and $\Theta_m$.
In fact up to a factor which depends on $m$ and $d$  only, it is a linear isometry when the spaces are endowed with the obvious $L^2$-inner products; this is detailed in \cite[Lemma 2.4]{DS} and \cite[Lemma 2.9]{GK2}.
There is a natural operator $D:=\mc{S}\circ \nabla: C^\infty(M; \otimes^m_ST^*M)\to  
C^\infty(M; \otimes^{m+1}_ST^*M)$ where $\nabla$ is the Levi-Civita connection acting on tensors and 
$\mc{S}:\otimes^mT^*M\to \otimes_S^mT^*M$ is the orthogonal projection from the space of $m$-tensors 
to symmetric ones. The trace map $\mc{T}: \otimes^m T^*M\to \otimes^{m-2}T^*M$ is defined by 
\[ \mc{T}u (v_1,\dots,v_{m-2})=\sum_{j=1}^d u(E_j,E_j, v_1,\dots,v_{m-2}), \quad v_i\in TM\]
where $(E_j)_{j=1,\dots,d}$ is an orthonormal basis of $TM$ for $g$. Then the adjoint $D^*=-\mc{T}\circ \nabla$ 
is (minus) the divergence operator. Then in \cite[Section 10]{DS}
we find the formula
\[X_{-}u=-\frac{m}{d+2m-2}\lambda D^*\lambda^{-1}u,\]
for $u\in\Omega_m$.
The expression for $X_{+}$ in terms of tensors is as follows. If $\mc{P}$ denotes orthogonal projection onto 
$\Theta_{m+1}$ then
\[X_{+}u=\lambda \mc{P}D\lambda^{-1}u\]
for $u\in\Omega_m$.
In other words, up to $\lambda$, $X_{+}$ is $\mc{P}D$ and $X_{-}$ is 
$-\frac{m}{d+2m-2}D^*$. The operator $X_+$, at least for $m=1$, has many names and is known as the conformal Killing operator, trace-free deformation tensor, or Ahlfors operator.

Under this identification $\mbox{\rm Ker}\,X_{+}$ consists of the {\it conformal Killing symmetric tensor fields}, a finite dimensional space. It is well known that the dimension of this space depends only on the conformal class
of the metric, but let us look at this in more detail.  Consider a new metric of the form $\tilde{g}=e^{2\varphi}\,g$.
The first observation is that the space $\Theta_{m}$ is the same for both metrics and thus the operator $\mc{P}$ 
is also the same for both metrics. To see how $D$ changes under conformal change, we see from Koszul formula 
that the two Levi-Civita connections $\nabla$  and $\tilde{\nabla}$ associated to $g$ and $\tilde{g}$, acting 
on $1$-forms $T\in C^\infty(M;T^*M)$, are related by
\begin{equation} \label{nablatilde_nabla_relation}
\tilde{\nabla}T= \nabla T-2\mc{S}(d\varphi\otimes T)+T(\tilde{\nabla} \varphi) \tilde{g}.
\end{equation}
Let $\tilde{D}=\mc{S}\tilde{\nabla}$, then since $\mc{P}$ corresponds to orthogonal projection to the space of trace-free tensors, the parts involving $\tilde{g}$ will disappear in the computation below when applying $\mc{P}$. We then get for 
$T=\sum_{\sigma\in \Pi_m}T_{\sigma(1)}\otimes \dots \otimes T_{\sigma(m)} 
\in C^\infty(M; \otimes_S^mT^*M)$ with $\Pi_m$ the set of permutations of $(1,\dots,m)$ 
and $T_j\in C^\infty(M;T^*M)$ that 
\[
e^{-2m\varphi}\mc{P}\tilde{D}(e^{2m\varphi}T)= \mc{P}\mc{S}\tilde{\nabla}T+2m\mc{P}\mc{S}(d\varphi\otimes T).
\] 
Since  
\[\tilde{\nabla}_Y(T_{\sigma(1)}\otimes \dots \otimes T_{\sigma(m)}) = (\tilde{\nabla}_Y T_{\sigma(1)}) \otimes \dots \otimes T_{\sigma(m)} +\dots+ T_{\sigma(1)} \otimes \dots \otimes (\tilde{\nabla}_Y T_{\sigma(m)}),\]
the formula \eqref{nablatilde_nabla_relation} and symmetrization give 
\begin{equation*}
\mc{P}\mc{S}\tilde{\nabla}T = \mc{P}\mc{S}(\nabla T- 2m d\varphi \otimes T).
\end{equation*}
Then we deduce that 
\begin{equation}\label{relationconforme} 
e^{-2m\varphi}\mc{P}\tilde{D}(e^{2m\varphi}T) = \mc{P}DT.
\end{equation}

If now we have a general Hermitian bundle $\mc{E}$ with connection $\nabla^{\mc{E}}$, we can proceed similarly.
The map $\la$ extends naturally to 
$\la: C^{\infty}(M; \otimes^m_ST^*M\otimes \mc{E})\to C^{\infty}(SM; \mc{E})$ and is an isomorphism between 
$\Theta_m$ and $\Omega_m$ where now $\Theta_m$ is the space of trace-free sections in 
$C^{\infty}(M; \otimes^m_ST^*M\otimes \mc{E})$ and $\Omega_m = H_m(SM;\mc{E}) \cap C^{\infty}(SM;\mc{E})$. 
We can define $D^\mc{E}$ acting on $C^\infty(M; \otimes_S^m T^*M\otimes \mc{E})$ by 
\[ D^{\mc{E}}u:= \mc{S}\nabla^\mc{E}(u)\]
where $\mc{S}$ means the symmetrization $\mc{S}:T^*M\otimes (\otimes_S^m T^*M)\otimes \mc{E} \to 
(\otimes_S^{m+1} T^*M)\otimes \mc{E}$.
Using a local orthonormal frame $(e_1,\dots,e_n)$ the connection $\nabla^{\mc{E}}=d+A$ for some connection $1$-form with values in skew-Hermitian matrices, and in this frame 
\[ D^{\mc{E}} (\sum_{k=1}^n u^k \otimes e_k) = \sum_{k,l=1}^n (Du^k)\otimes e_k+ \mc{S}(A^k_l \otimes u^l)\otimes e_k.\]
Then we get 
\[\mathbb{X}_{+}\lambda u=\lambda \mc{P}D^\mc{E}u\]
and hence the elements in the kernel of $\mathbb{X}_{+}$ are in 1-1 correspondence with tensors 
$u\in\Theta_m$ with $\mc{P}D^{\mc{E}}u=0$. Since in the local frame $(e_1,\ldots, e_n)$ we have, using 
\eqref{relationconforme}, that for $u=\sum_{k=1}^n u^k \otimes e_k$
\[\mc{P}\tilde{D}^{\mc{E}}(e^{2m\varphi}u)=\sum_{k=1}^n
e^{2m\varphi}\mc{P} (Du^k) \otimes e_k +\sum_{k,l=1}^n
\mc{P}\mc{S}(A^k_l \otimes e^{2m\varphi}u^l)\otimes e_k=e^{2m\varphi}\mc{P}D^{\mc{E}}u\]
we see that the dimension of the space of twisted conformal Killing tensors is also a conformal invariant.

\section{Finite degree} \label{sec_finite_degree}

In this section we will prove the finite degree part of Theorem \ref{main_thm_closed_finitedegree} in the closed case, as well as its analogue in the boundary case. In Section \ref{sec_twisted_ckts_raytransform} we will consider the corresponding improved results (stating that $u$ has degree one smaller than $f$) in those cases where twisted CKTs do not exist. The underlying idea of the proof of finite degree is that for sufficiently high enough Fourier modes, the sectional curvature overtakes the contribution of the connection and the Higgs field in the Pestov identity. This idea first appeared in \cite{Pa2} in 2D and its implementation in higher dimensions is one the contributions of the present paper.

We use the notations of Section \ref{sec_pestov}. For simplicity we first discuss the case where no Higgs field is present, and prove the following result:

\begin{Theorem}\label{thm:finitedegree} 
Let $(M,g)$ be a compact manifold of negative sectional curvature with or without boundary 
and let $\mc{E}$ be a Hermitian bundle equipped with a Hermitian connection $\nabla^\mc{E}$. 
Suppose $u\in C^{\infty}(SM;\mc{E})$ solves
\[\mathbb{X}u=f\]
where $f$ has finite degree (and $u|_{\partial(SM)}=0$ in the boundary case).  Then $u$ has finite degree.
\end{Theorem}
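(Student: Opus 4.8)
The plan is to exploit the special case of the Pestov identity on $\Omega_m$ (Proposition \ref{prop_pestov_omegam}) together with the lower bound for $\norm{\mathbb{X}\vd\,^{\mc{E}}u}$ in Lemma \ref{lemma_xvu_lowerbound}, and to show that for sufficiently large $m$ the curvature term dominates the connection term, forcing the high Fourier modes of $u$ to vanish. Write $u=\sum_{l=0}^\infty u_l$ with $u_l\in\Omega_l$, and $f=\sum_{l=0}^N f_l$ of finite degree $N$. Since $\mathbb{X}u=f$, for each $l\geq N+1$ one has $\mathbb{X}_+u_{l-1}+\mathbb{X}_-u_{l+1}=0$, i.e.\ $\mathbb{X}_+u_{l-1}=-\mathbb{X}_-u_{l+1}$. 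First I would record this recursion and the associated orthogonality/telescoping consequences: pairing with $u_{l-1}$ and $u_{l+1}$ and using $\mathbb{X}_+^*=-\mathbb{X}_-$ gives $\norm{\mathbb{X}_+u_{l-1}}=\norm{\mathbb{X}_-u_{l+1}}$ and lets one relate norms across the tail.

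Next, for $l$ large I would apply Proposition \ref{prop_pestov_omegam} to $u_l$. Using negativity of the sectional curvature, $-(R\vd\,^{\mc{E}}u_l,\vd\,^{\mc{E}}u_l)\geq \kappa\norm{\vd\,^{\mc{E}}u_l}^2$ for some $\kappa>0$ (with $\kappa = -\max$ of sectional curvature, say), while $\norm{\vd\,^{\mc{E}}u_l}^2$ is comparable to $l(l+d-2)\norm{u_l}^2$. The Higgs-free curvature-of-connection term is bounded: $\abs{(F^{\mc{E}}u_l,\vd\,^{\mc{E}}u_l)}\leq C\norm{u_l}\norm{\vd\,^{\mc{E}}u_l}\leq C'\sqrt{l}\,\norm{u_l}^2$ for a constant depending only on $\sup\abs{f^{\mc{E}}}$. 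So in Proposition \ref{prop_pestov_omegam} the left-hand side is bounded below by $(2m+d-3)\norm{\mathbb{X}_-u_l}^2 + \norm{\hd\,^{\mc{E}}u_l}^2 + (\kappa l(l+d-2) - C'\sqrt l\,)\norm{u_l}^2$, and for $l$ large the coefficient of $\norm{u_l}^2$ is positive and grows quadratically, while the right-hand side $(2m+d-1)\norm{\mathbb{X}_+u_l}^2$ grows at worst quadratically in $l$ times $\norm{u_l}^2$ as well. The point is that the coefficient comparison must be arranged so the quadratic curvature gain wins. Here I would feed in the recursion $\mathbb{X}_+u_l=-\mathbb{X}_-u_{l+2}$ to bound $\norm{\mathbb{X}_+u_l}^2$ by a term already controlled one level up, obtaining an inequality of the form $a_l\norm{u_l}^2 + \norm{\mathbb{X}_-u_l}^2 \lesssim \norm{\mathbb{X}_-u_{l+2}}^2$ with $a_l\to\infty$; summing/iterating this over $l$ in the tail, together with the fact that $u\in C^\infty$ so $\sum_l l^k\norm{u_l}^2<\infty$ for every $k$, forces $u_l=0$ for all large $l$.

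The main obstacle I anticipate is getting the constants in the coefficient bookkeeping to close: the Pestov identity on $\Omega_m$ produces $(2m+d-1)\norm{\mathbb{X}_+u}^2$ on the right, which after using the recursion reappears as $\norm{\mathbb{X}_-u_{l+2}}^2$ at the next level with a comparable coefficient, so one must check that the curvature gain $\kappa l^2$ genuinely beats the multiplicative loss in passing from level $l$ to level $l+2$, rather than just being eventually positive in isolation. This is where negativity (as opposed to nonpositivity) of the curvature is used quantitatively, and one should track that the relevant ratio of Pestov coefficients tends to $1$ while the curvature term grows, so a telescoping/summation argument over the tail converges. A clean way to organize this is to define $E_l := \norm{\mathbb{X}_-u_l}^2$ and show $E_l + a_l\norm{u_l}^2 \leq b_l E_{l+2}$ with $b_l\to 1$ and $a_l\to\infty$, then conclude by the rapid decay of $\norm{u_l}$ coming from smoothness of $u$. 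The Higgs field is reinstated afterward (as in the paper) by absorbing $\Phi$ into a zeroth-order perturbation that does not affect the high-mode asymptotics.
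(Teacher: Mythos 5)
Your proposal is essentially the paper's second proof of Theorem \ref{thm:finitedegree}: apply the Pestov identity on $\Omega_m$ (Proposition \ref{prop_pestov_omegam}), use negative curvature to beat the $F^{\mc{E}}$ term, derive a contraction $\norm{\mathbb{X}_- u_m}^2 + c_m\norm{u_m}^2 \leq d_m\norm{\mathbb{X}_+ u_m}^2$ (Lemma \ref{lemma_beurling_contraction}), then feed in the recursion $\mathbb{X}_+ u_m = -\mathbb{X}_- u_{m+2}$ and telescope. The one substantive difference is that you discard $\norm{\hd\,^{\mc{E}}u}^2$ by the trivial bound $\geq 0$, whereas the paper uses Lemma \ref{lemma_nablah_nablav_innerproduct} to extract $\frac{m+d-1}{m+1}\norm{\mathbb{X}_+u}^2 + \frac{m-1}{m+d-3}\norm{\mathbb{X}_-u}^2$ from it. That refinement is not cosmetic: it sharpens the contraction factor to $d_m = 1$ for $d\neq 3$, and $d_m - 1 = O(m^{-3})$ for $d = 3$, so $\prod_j d_{m+2j}$ converges and the argument closes using only $\mathbb{X}_- u \in L^2$. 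With your crude bound one gets $d_m \approx (2m+d-1)/(2m+d-3) = 1 + O(1/m)$, so $\prod_j d_{m+2j}$ diverges polynomially (like $N^{1/2}$ after $N$ steps), and the telescoping only closes because $\norm{\mathbb{X}_-u_m}$ decays faster than any polynomial (from smoothness of $u$, not merely $L^2$ boundedness). You do invoke this rapid decay, so the argument works, but the step ``ratio of Pestov coefficients tends to $1$ while the curvature term grows, so the telescoping converges'' is where the real work is, and as written it reads as a hope rather than a verification; you should either import Lemma \ref{lemma_nablah_nablav_innerproduct} to make $\prod d_m$ bounded, or explicitly quantify that $\prod d_{m+2j}$ grows at most polynomially while $\norm{\mathbb{X}_- u_{m+2j}}$ decays superpolynomially. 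Two smaller points: the bound on the connection-curvature term should read $\abs{(F^{\mc{E}}u_l,\vd\,^{\mc{E}}u_l)} \leq C\norm{u_l}\norm{\vd\,^{\mc{E}}u_l} = C\lambda_l^{1/2}\norm{u_l}^2 = O(l)\norm{u_l}^2$, not $O(\sqrt{l})$ (harmless, since $l \ll l^2$); and Lemma \ref{lemma_xvu_lowerbound} is not used in this route --- it belongs to the paper's alternative first proof (Lemma \ref{lemma_pestov_negative1}), which applies the full Pestov identity to the tail $T_{\geq m}u$ rather than to individual $u_l$.
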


The proofs in the closed case and in the boundary case are identical, and we will henceforth consider only closed manifolds in this section. We give two proofs of Theorem \ref{thm:finitedegree}. The first proof is based on applying the Pestov identity with a connection to the tail of a Fourier series, which gives the following result. We use the notation 
\[
T_{\geq m} u = \sum_{k=m}^{\infty} u_k, \qquad u \in L^2(SM;\mc{E}).
\]

\begin{Lemma} \label{lemma_pestov_negative1}
Let $(M,g)$ be a closed manifold such that the sectional curvatures are uniformly 
bounded above by $-\kappa$ for some $\kappa > 0$. 
Let $(\mc{E},\nabla^{\mc{E}})$ be a Hermitian bundle with Hermitian connection over $M$, 
and assume that $m \geq 1$ is so large that 
\[
\lambda_m \geq \frac{2 \norm{F^{\mc{E}}}_{L^{\infty}}^2}{\kappa^2}
\]
where $\lambda_m = m(m+d-2)$, $F^{\mc{E}}$ is the curvature operator of $\nabla^{\mc{E}}$ defined 
by \eqref{defFE}, and 
\[
\norm{F^{\mc{E}}}_{L^{\infty}} = \norm{F^{\mc{E}}}_{L^{\infty}(SM ; N \otimes \mathrm{End}(\mc{E}))}.
\]
Then we have the inequality 
\[
\frac{\kappa}{4} \norm{\vd\, ^{\mc{E}}T_{\geq m} u}^2 \leq \norm{\vd\, ^{\mc{E}}T_{\geq m+1} \mathbb{X} u}^2, \qquad u \in C^{\infty}(SM; \mc{E}).
\]
\end{Lemma}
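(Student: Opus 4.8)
The plan is to apply the Pestov identity with a connection (Proposition~\ref{prop_pestov}) not to $u$ itself but to its high-frequency tail $w := T_{\geq m}u = \sum_{l\geq m}u_l$. This is legitimate: $T_{<m}u=\sum_{l=0}^{m-1}u_l$ is a finite sum of spherical-harmonic components, each of which is smooth (the fiberwise projections onto $H_l$ are smoothing), so $w\in C^\infty(SM;\mc{E})$, and in the closed case no boundary condition is needed. The key bookkeeping point is that $\mathbb{X}_\pm$ shift degree by $\pm 1$, so $\mathbb{X}w$ only has Fourier components in degrees $\geq m-1$, and in fact $(\mathbb{X}w)_k=(\mathbb{X}u)_k$ for every $k\geq m+1$; the only components where $\mathbb{X}w$ differs from $\mathbb{X}u$ are $(\mathbb{X}w)_{m-1}=\mathbb{X}_-u_m$ and $(\mathbb{X}w)_m=\mathbb{X}_-u_{m+1}$. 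Hence $T_{\geq m+1}\mathbb{X}w=T_{\geq m+1}\mathbb{X}u$. Using the orthogonality $(\vd\,^{\mc{E}}a,\vd\,^{\mc{E}}b)=(\Delta^{\mc{E}}a,b)=\lambda_j(a,b)$ for $a\in\Omega_j$, which makes the images under $\vd\,^{\mc{E}}$ of distinct $\Omega_k$'s mutually orthogonal, I would split
\[
\norm{\vd\,^{\mc{E}}\mathbb{X}w}^2 = \norm{\vd\,^{\mc{E}}(\mathbb{X}w)_{m-1}}^2 + \norm{\vd\,^{\mc{E}}(\mathbb{X}w)_m}^2 + \norm{\vd\,^{\mc{E}}T_{\geq m+1}\mathbb{X}u}^2 .
\]

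Next I would bound the right-hand side of the Pestov identity for $w$ from below. Since the sectional curvatures are $\leq-\kappa$, the operator $R(x,v)$ is $\leq-\kappa\,\mathrm{Id}$ on $\{v\}^\perp$ (hence on $N\otimes\mc{E}$), so $-(R\vd\,^{\mc{E}}w,\vd\,^{\mc{E}}w)\geq\kappa\norm{\vd\,^{\mc{E}}w}^2$; the term $(d-1)\norm{\mathbb{X}w}^2$ is nonnegative and is discarded. For the connection term, every Fourier component of $w$ has degree $\geq m$, so $\norm{\vd\,^{\mc{E}}w}^2=\sum_{l\geq m}\lambda_l\norm{u_l}^2\geq\lambda_m\norm{w}^2$, giving $\abs{(F^{\mc{E}}w,\vd\,^{\mc{E}}w)}\leq\norm{F^{\mc{E}}}_{L^\infty}\norm{w}\,\norm{\vd\,^{\mc{E}}w}\leq\norm{F^{\mc{E}}}_{L^\infty}\lambda_m^{-1/2}\norm{\vd\,^{\mc{E}}w}^2$. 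The remaining term $\norm{\mathbb{X}\vd\,^{\mc{E}}w}^2$ is where Lemma~\ref{lemma_xvu_lowerbound} is used: applied to $w=\sum_{l\geq m}u_l$ it bounds this below by $\tfrac{(m-1)(m+d-2)^2}{m+d-3}\norm{(\mathbb{X}w)_{m-1}}^2+\tfrac{m(m+d-1)^2}{m+d-2}\norm{(\mathbb{X}w)_m}^2$ (for $m\geq 2$; for $m=1$ the first term is absent and $\vd\,^{\mc{E}}(\mathbb{X}w)_0=0$). An elementary check that these coefficients are $\geq\lambda_{m-1}$ and $\geq\lambda_m$ respectively then yields $\norm{\mathbb{X}\vd\,^{\mc{E}}w}^2\geq\norm{\vd\,^{\mc{E}}(\mathbb{X}w)_{m-1}}^2+\norm{\vd\,^{\mc{E}}(\mathbb{X}w)_m}^2$.

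Feeding these estimates into the Pestov identity and comparing with the displayed splitting, the two low-degree "leakage" terms cancel, leaving
\[
\norm{\vd\,^{\mc{E}}T_{\geq m+1}\mathbb{X}u}^2 \geq \Big(\kappa-\norm{F^{\mc{E}}}_{L^\infty}\lambda_m^{-1/2}\Big)\norm{\vd\,^{\mc{E}}T_{\geq m}u}^2 .
\]
Finally the hypothesis $\lambda_m\geq 2\norm{F^{\mc{E}}}_{L^\infty}^2/\kappa^2$ gives $\norm{F^{\mc{E}}}_{L^\infty}\lambda_m^{-1/2}\leq\kappa/\sqrt2$, so the constant on the right is at least $(1-1/\sqrt2)\kappa>\kappa/4$, which is exactly the asserted inequality.

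The main obstacle is not the curvature-versus-connection balance (that is a one-line consequence of the hypothesis once everything is in place) but the careful handling of the two components produced by applying $\mathbb{X}$ to the truncation $w$: one must verify that they live only in degrees $m-1$ and $m$, that $T_{\geq m+1}\mathbb{X}w=T_{\geq m+1}\mathbb{X}u$, and that Lemma~\ref{lemma_xvu_lowerbound} furnishes just enough slack in $\norm{\mathbb{X}\vd\,^{\mc{E}}w}^2$ to absorb them; one should also note that all the series involved converge and may be manipulated termwise because $u$ is smooth.
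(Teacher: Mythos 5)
Your proof is correct and takes essentially the same route as the paper's: reduce to $w=T_{\geq m}u$, apply the Pestov identity, split $\norm{\vd\,^{\mc{E}}\mathbb{X}w}^2$ into the two low-degree terms plus the tail, absorb the low-degree terms via Lemma~\ref{lemma_xvu_lowerbound} by checking the coefficient inequalities, and use the curvature bound together with the hypothesis on $\lambda_m$. The only cosmetic difference is in the final estimate of the curvature term: the paper applies Young's inequality $\abs{(F^{\mc{E}}u,\vd\,^{\mc{E}}u)}\leq\tfrac{1}{2}(\kappa^{-1}\norm{F^{\mc{E}}}_{L^\infty}^2\norm{u}^2+\kappa\norm{\vd\,^{\mc{E}}u}^2)$ and then uses $\norm{u}^2\leq\lambda_m^{-1}\norm{\vd\,^{\mc{E}}u}^2$, whereas you use Cauchy--Schwarz with $\norm{w}\leq\lambda_m^{-1/2}\norm{\vd\,^{\mc{E}}w}$ directly, landing on the constant $(1-1/\sqrt{2})\kappa>\kappa/4$; both routes yield the stated bound.
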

\begin{proof}
We will do the proof for $m \geq 2$ (the argument for $m=1$ is similar). Since $T_{\geq m+1} \mathbb{X} T_{\geq m} u = T_{\geq m+1} \mathbb{X} u$, it is enough to prove that 
$$
\frac{\kappa}{4} \norm{\vd\, ^{\mc{E}}u}^2 \leq \norm{\vd\, ^{\mc{E}}T_{\geq m+1} \mathbb{X} u}^2, \qquad u \in T_{\geq m} C^{\infty}(SM;\mc{E}).
$$
If $u \in T_{\geq m} C^{\infty}(SM;\mc{E})$, the Pestov identity yields 
\begin{align*}
 &\norm{\vd\, ^{\mc{E}}T_{\geq m+1} \mathbb{X} u}^2 + m(m+d-2) \norm{(\mathbb{X} u)_m}^2 + (m-1)(m+d-3) \norm{(\mathbb{X} u)_{m-1}}^2 \\
 &= \norm{\vd\, ^{\mc{E}}\mathbb{X} u}^2 \\
 &= \norm{\mathbb{X}\vd\, ^{\mc{E}}u}^2-(R\,\vd\, ^{\mc{E}}u,\vd\, ^{\mc{E}}u) -(F^{\mc{E}}u,\vd\, ^{\mc{E}}u)+ 
 (d-1)\norm{\mathbb{X} u}^2 \\
\intertext{which is, using Lemma \ref{lemma_xvu_lowerbound} and the fact that the sectional curvatures are $\leq -\kappa$,}
 &\geq \frac{(m-1)(m+d-2)^2}{m+d-3} \norm{(\mathbb{X} u)_{m-1}}^2 + \frac{m(m+d-1)^2}{m+d-2} \norm{(\mathbb{X} u)_m}^2 \\
 &\qquad + \kappa \norm{\vd\, ^{\mc{E}}u}^2 -(F^{\mc{E}}u,\vd\, ^{\mc{E}}u)+ (d-1)\norm{\mathbb{X} u}^2.
\end{align*}
We obtain in particular that 
$$
\kappa \norm{\vd\, ^{\mc{E}}u}^2 \leq \norm{\vd\, ^{\mc{E}}T_{\geq m+1} \mathbb{X} u}^2 + (F^{\mc{E}}u,\vd\, ^{\mc{E}}u).
$$
Using the inequality $\abs{(F^{\mc{E}}u,\vd\, ^{\mc{E}}u)} \leq \frac{1}{2}(\frac{1}{\kappa} \norm{F^{\mc{E}}}_{L^{\infty}}^2 \norm{u}^2 + \kappa \norm{\vd\, ^{\mc{E}}u}^2)$, we get 
$$
\frac{\kappa}{2} \norm{\vd\, ^{\mc{E}}u}^2 \leq \norm{\vd\, ^{\mc{E}}T_{\geq m+1} \mathbb{X} u}^2 + \frac{\norm{F^{\mc{E}}}_{L^{\infty}}^2}{2 \kappa} \norm{u}^2.
$$
Finally, we have $\frac{\norm{F^{\mc{E}}}_{L^{\infty}}^2}{2 \kappa} \norm{u}^2 \leq \frac{\norm{F^{\mc{E}}}_{L^{\infty}}^2}{2 \kappa \lambda_m} \norm{\vd\, ^{\mc{E}}u}^2$, and thus if $m$ is so large that 
$$
\lambda_m \geq \frac{2 \norm{F^{\mc{E}}}_{L^{\infty}}^2}{\kappa^2},
$$
then we have 
\begin{equation*}
\frac{\kappa}{4} \norm{\vd\, ^{\mc{E}}u}^2 \leq \norm{\vd\, ^{\mc{E}}T_{\geq m+1} \mathbb{X} u}^2.   \qedhere
\end{equation*}
\end{proof}

For possible later purposes, we record another lemma which follows easily from the previous one and states that if $\mathbb{X} u$ is smooth in the vertical variable, then so is $u$. (This lemma will not be used anywhere in this paper.)

\begin{Lemma}
Let $(M,g)$ and $(\mc{E},\nabla^{\mc{E}})$ as in Lemma \ref{lemma_pestov_negative1}
and assume that $m \geq 1$ is so large that 
\[
\lambda_m \geq \frac{2 \norm{F^{\mc{E}}}_{L^{\infty}}^2}{\kappa^2}.
\]
Let also $\eps > 0$. There is $C = C(\kappa, \eps) > 0$ so that for any $N \geq 1$ we have 
\[
\sum_{k=m+1}^{\infty} \lambda_{k}^{N-1/2-\eps} \norm{u_{k}}^2 \leq \frac{C}{m^{2\eps}} \sum_{l=m+1}^{\infty} \lambda_l^{N} \norm{(\mathbb{X} u)_l}^2, \qquad u \in C^{\infty}(SM;\mc{E}).
\]
\end{Lemma}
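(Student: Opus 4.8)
The plan is to iterate Lemma~\ref{lemma_pestov_negative1} with weights. The key observation is that Lemma~\ref{lemma_pestov_negative1} already gives, for each $k \geq m$, an inequality of the form $\tfrac{\kappa}{4}\norm{\vd\,^{\mc{E}}T_{\geq k}u}^2 \leq \norm{\vd\,^{\mc{E}}T_{\geq k+1}\mathbb{X}u}^2$, provided $\lambda_k \geq 2\norm{F^{\mc{E}}}_{L^\infty}^2/\kappa^2$, which holds for all $k \geq m$ by monotonicity of $\lambda_k$. Spelling this out in Fourier modes, since $\norm{\vd\,^{\mc{E}}u_j}^2$ is comparable to $\lambda_j\norm{u_j}^2$ (up to $m,d$-dependent constants coming from the isometry $\lambda$ of Section~3.6, or simply from $\Delta^{\mc{E}}u_j = \lambda_j u_j$ and $\norm{\vd\,^{\mc{E}}u_j}^2 = (\Delta^{\mc{E}}u_j,u_j)$ exactly), the inequality reads
\[
\frac{\kappa}{4}\sum_{j\geq k}\lambda_j\norm{u_j}^2 \leq \sum_{l\geq k+1}\lambda_l\norm{(\mathbb{X}u)_l}^2, \qquad k \geq m.
\]
So $\sum_{j\geq k}\lambda_j\norm{u_j}^2$ is controlled by $\sum_{l\geq k+1}\lambda_l\norm{(\mathbb{X}u)_l}^2$ for every $k\geq m$.

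Next I would boost the power of $\lambda$ on the left by summation by parts (Abel summation). Write $a_j = \lambda_j\norm{u_j}^2$ and $b_l = \lambda_l\norm{(\mathbb{X}u)_l}^2$, so the above says $S_k := \sum_{j\geq k}a_j \leq \tfrac{4}{\kappa}\sum_{l\geq k+1}b_l =: \tfrac{4}{\kappa}R_{k+1}$. To produce $\sum_k \lambda_k^{N-1/2-\eps}\norm{u_k}^2 = \sum_k \lambda_k^{N-3/2-\eps}a_k$, I would express this as $\sum_{k>m}(\lambda_k^{N-3/2-\eps}-\lambda_{k-1}^{N-3/2-\eps})S_k$ plus a boundary term $\lambda_m^{N-3/2-\eps}S_{m+1}$ (using that $N \geq 1$ so the exponent $N-3/2-\eps$ may be negative, but the telescoping identity $\sum_k c_k a_k = \sum_k(c_k - c_{k+1})S_{k+1} + c_{m+1}S_{m+1}$ with $c_k = \lambda_k^{N-3/2-\eps}$ still holds as long as $c_k S_k \to 0$, which is automatic for $u$ smooth). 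Then bound each $S_{k+1} \leq \tfrac{4}{\kappa}R_{k+2} \leq \tfrac{4}{\kappa}R_{k+1}$ and apply a second Abel summation to $\sum_k (c_k-c_{k+1})R_{k+1}$, converting it back to $\sum_l d_l b_l$ with $d_l$ a partial sum of $c_k - c_{k+1}$. The point is that $\sum_{k\leq l}(c_k - c_{k+1}) \sim c_m \lesssim \lambda_m^{N-3/2-\eps}$ when the exponent is $\leq 0$, while for $0 < N - 3/2 - \eps$ one gets $d_l \sim c_l = \lambda_l^{N-3/2-\eps} \leq \lambda_l^{N-1}\lambda_m^{-1/2-\eps+\text{(nonneg)}}$... so one must track the exact algebra.

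The main obstacle is getting the constants right: I want the right-hand side to be $\tfrac{C}{m^{2\eps}}\sum_l \lambda_l^N\norm{(\mathbb{X}u)_l}^2 = \tfrac{C}{m^{2\eps}}\sum_l \lambda_l^{N-1}b_l$, so after the two Abel summations I need $d_l \leq C m^{-2\eps}\lambda_l^{N-1}$, i.e.\ $d_l/\lambda_l^{N-1} \leq C m^{-2\eps}$. Since $\lambda_k \sim k^2$, one has $\lambda_k^{-1/2-\eps} \sim k^{-1-2\eps}$, and the gain $m^{-2\eps}$ comes precisely from the fact that the ``extra'' half-power plus $\eps$ is spent summing a convergent tail $\sum_{k\geq m}k^{-1-2\eps}\sim m^{-2\eps}$. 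Concretely, after rearranging one should arrive at an estimate like $d_l \lesssim \lambda_l^{N-1}\sum_{k\geq \max(l,m)}\lambda_k^{-1/2-\eps}\lambda_k^{-1/2}$ or similar, and the worst case $l\leq m$ gives $\sum_{k\geq m}k^{-1-2\eps}\lesssim_\eps m^{-2\eps}$ while $l > m$ only improves things. I expect the bookkeeping — keeping the dependence on $N$ benign (it enters only through a telescoping sum $\lambda_k^{N}-\lambda_{k-1}^N \leq N\lambda_k^{N-1}(\lambda_k-\lambda_{k-1})$, with the $N$ absorbed against a convergence gain or simply noting $N$-uniformity follows from $\lambda_k \geq \lambda_m$ large) and handling the two regimes of the sign of $N-3/2-\eps$ separately — to be the fiddly part, but conceptually it is just two summations by parts feeding the geometric inequality from Lemma~\ref{lemma_pestov_negative1}.
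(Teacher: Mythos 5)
Your plan is genuinely different from the paper's, and it is both more complicated and left unfinished; in particular the $N$-uniformity of the constant is not resolved. The paper never performs any Abel summation. Instead it applies Lemma~\ref{lemma_pestov_negative1} at every starting level $k\geq m$ (as you also noticed is legitimate), but then does two much simpler things. First, from the left-hand tail $\frac{\kappa}{4}\sum_{j\geq k}\lambda_j\norm{u_j}^2\leq\sum_{l\geq k+1}\lambda_l\norm{(\mathbb{X}u)_l}^2$ it only retains the single lowest mode: $\frac{\kappa}{4}\lambda_{k+1}\norm{u_{k+1}}^2\leq\sum_{l\geq k+1}\lambda_l\norm{(\mathbb{X}u)_l}^2$. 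Second, it promotes the power on the right without any summation by parts: since $N\geq 1$ and $\lambda_l$ is increasing, $\lambda_l^{1-N}\leq\lambda_{k+1}^{1-N}$ for $l\geq k+1$, hence $\sum_{l\geq k+1}\lambda_l\norm{(\mathbb{X}u)_l}^2\leq\lambda_{k+1}^{1-N}\sum_{l\geq k+1}\lambda_l^N\norm{(\mathbb{X}u)_l}^2$. Combining and enlarging the range of $l$, one gets the uniform single-mode bound
\[
\sup_{k\geq m+1}\lambda_k^N\norm{u_k}^2\ \leq\ \frac{4}{\kappa}\sum_{l\geq m+1}\lambda_l^N\norm{(\mathbb{X}u)_l}^2,
\]
with a constant independent of $N$. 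The final estimate is then immediate: $\sum_{k\geq m+1}\lambda_k^{N-1/2-\eps}\norm{u_k}^2\leq\bigl(\sum_{k\geq m+1}\lambda_k^{-1/2-\eps}\bigr)\sup_{k\geq m+1}\lambda_k^N\norm{u_k}^2$, and $\sum_{k\geq m+1}\lambda_k^{-1/2-\eps}\lesssim_\eps m^{-2\eps}$ since $\lambda_k\sim k^2$.

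The gap in your plan is that the double Abel summation route does not obviously deliver an $N$-independent constant: each summation by parts produces differences like $\lambda_k^{N-3/2-\eps}-\lambda_{k-1}^{N-3/2-\eps}$ whose size involves a factor $\sim N$, and you would need to track carefully that this is cancelled before the right-hand side is reassembled; you acknowledge this is ``fiddly'' but do not carry it out, and it is precisely where the bookkeeping could go wrong. You also have to separate the cases according to the sign of $N-3/2-\eps$, which the paper avoids entirely. The decisive idea you missed is to pass through a \emph{sup} bound on $\lambda_k^N\norm{u_k}^2$ rather than trying to estimate the weighted $\ell^1$-sum directly; once you have the uniform bound, the $m^{-2\eps}$ gain comes only from the deterministic convergent weight $\sum_{k\geq m+1}\lambda_k^{-1/2-\eps}$ and never interacts with $N$. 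This turns the argument into a three-line computation.
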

\begin{proof}
By Lemma \ref{lemma_pestov_negative1}, 
\begin{align*}
\frac{\kappa}{4} \norm{\vd\, ^{\mc{E}}T_{\geq m} u}^2 &\leq \sum_{l=m+1}^{\infty} \lambda_l \norm{(\mathbb{X} u)_l}^2 = \sum_{l=m+1}^{\infty} \lambda_l^{1-N} \lambda_l^{N} \norm{(\mathbb{X} u)_l}^2 \\
 &\leq \lambda_{m+1}^{1-N} \sum_{l=m+1}^{\infty} \lambda_l^{N} \norm{(\mathbb{X} u)_l}^2.
\end{align*}
Thus in particular $\frac{\kappa}{4} \lambda_{m+1}^N \norm{u_{m+1}}^2 \leq \sum_{l=m+1}^{\infty} \lambda_l^{N} \norm{(\mathbb{X} u)_l}^2$. This shows that 
\[
\sum_{k=m+1}^{\infty} \lambda_{k}^{N-1/2-\eps} \norm{u_{k}}^2 \leq \frac{C}{m^{2\eps}} \sup_{k \geq m+1} \lambda_{k}^N \norm{u_{k}}^2 \leq \frac{C}{m^{2\eps}} \sum_{l=m+1}^{\infty} \lambda_l^{N} \norm{(\mathbb{X} u)_l}^2. \qedhere
\]
\end{proof}

\begin{proof}[First proof of Theorem \ref{thm:finitedegree}]
If $(M,g)$ has sectional curvatures bounded above by $-\kappa$ where $\kappa > 0$, and if $f$ has degree $l$, we choose $m \geq 1$ so large that $\lambda_m \geq \frac{2 \norm{F^{\mc{E}}}_{L^{\infty}}^2}{\kappa^2}$ and also $m \geq l$. Then $T_{\geq m+1} \mathbb{X} u = 0$, thus by Lemma \ref{lemma_pestov_negative1} $T_{\geq m} u = 0$ so $u$ has degree less or equal to $m-1$.
\end{proof}

To deal with the case of nonzero Higgs field, it is convenient to use another proof of Theorem \ref{thm:finitedegree}. We first give the argument for $\Phi=0$. If $(M,g)$ has negative curvature and $d \neq 4$, the next result implies in particular that 
\[
\norm{\mathbb{X}_{-}u}\leq \norm{\mathbb{X}_{+}u}, \qquad u \in \Omega_m, \ \text{$m$ sufficiently large}.
\]
This is an analogue of the Beurling contraction property that was discussed in \cite{PSU_hd} 
in the case of the trivial line bundle $\mc{E}=M \times \C$ with flat connection.

\begin{Lemma} \label{lemma_beurling_contraction}
Let $(M,g)$ and $(\mc{E},\nabla^{\mc{E}})$ as in Lemma \ref{lemma_pestov_negative1}
and assume that $m \geq 1$ is so large that 
\[
\lambda_m \geq \frac{4 \norm{F^{\mc{E}}}_{L^{\infty}}^2}{\kappa^2}
\]
where $\lambda_m = m(m+d-2)$. Then for any $u \in \Omega_m$ we have 
\[
\norm{\mathbb{X}_{-}u}^{2} + c_m \norm{u}^2 \leq d_m \norm{\mathbb{X}_{+}u}^{2}
\]
where $c_m$ and $d_m$ can be chosen as  
\[
c_m = \frac{\kappa m}{4}, \qquad d_m = \left\{ \begin{array}{cl} 1, & d \neq 3 \text{ and } m \geq 2, \\ \frac{d+2}{2d-2}, & d \neq 3 \text{ and } m = 1, \\ 1+\frac{1}{(m+1)^2(2m-1)}, & d = 3. \end{array} \right.
\]
\end{Lemma}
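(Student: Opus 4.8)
The plan is to feed the Pestov identity on $\Omega_m$ (Proposition \ref{prop_pestov_omegam}) with two a priori estimates: a lower bound for the curvature term $-(R\,\vd\,^{\mc{E}}u,\vd\,^{\mc{E}}u)$ coming from negative curvature, which produces the $\norm{u}^2$ term, and a sharp lower bound for $\norm{\hd\,^{\mc{E}}u}^2$ in terms of $\mathbb{X}_\pm$, which produces the contraction. Since the sectional curvatures are $\leq -\kappa$, the operator $R(x,v)$ on $\{v\}^\perp$ satisfies $R(x,v)\leq -\kappa\,\mathrm{Id}$, whence $-(R\,\vd\,^{\mc{E}}u,\vd\,^{\mc{E}}u)\geq \kappa\norm{\vd\,^{\mc{E}}u}^2=\kappa\lambda_m\norm{u}^2$, using $\norm{\vd\,^{\mc{E}}u}^2=(\Delta^{\mc{E}}u,u)=\lambda_m\norm{u}^2$ for $u\in\Omega_m$. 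The connection curvature term is absorbed by Cauchy--Schwarz: $\abs{(F^{\mc{E}}u,\vd\,^{\mc{E}}u)}\leq \norm{F^{\mc{E}}}_{L^\infty}\sqrt{\lambda_m}\,\norm{u}^2\leq \tfrac{\kappa\lambda_m}{2}\norm{u}^2$, the last step being exactly the hypothesis $\lambda_m\geq 4\norm{F^{\mc{E}}}_{L^\infty}^2/\kappa^2$. Substituting into Proposition \ref{prop_pestov_omegam} gives
\[(2m+d-3)\norm{\mathbb{X}_-u}^2+\norm{\hd\,^{\mc{E}}u}^2+\tfrac{\kappa\lambda_m}{2}\norm{u}^2\leq (2m+d-1)\norm{\mathbb{X}_+u}^2.\]

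The second ingredient is the Hodge-type decomposition of Lemma \ref{lemma_nablah_nablav_innerproduct}, which for $u\in\Omega_m$ reads $\hd\,^{\mc{E}}u=\vd\,^{\mc{E}}\big[\tfrac{1}{m+1}\mathbb{X}_+u-\tfrac{1}{m+d-3}\mathbb{X}_-u\big]+Z(u)$ with $\vdiv\,^{\mc{E}}Z(u)=0$ (for $m=1$ the $\mathbb{X}_-$ term lies in $\Omega_0$, is annihilated by $\vd\,^{\mc{E}}$, and drops out). Since $\mathbb{X}_+u\in\Omega_{m+1}$ and $\mathbb{X}_-u\in\Omega_{m-1}$ lie in orthogonal summands and $\vd\,^{\mc{E}}(\,\cdot\,)\perp Z(u)$, expanding the norm and using $\norm{\vd\,^{\mc{E}}w}^2=\lambda_k\norm{w}^2$ for $w\in\Omega_k$ together with $\lambda_{m+1}/(m+1)^2=(m+d-1)/(m+1)$ and $\lambda_{m-1}/(m+d-3)^2=(m-1)/(m+d-3)$ yields
\[\norm{\hd\,^{\mc{E}}u}^2\;\geq\;\tfrac{m+d-1}{m+1}\norm{\mathbb{X}_+u}^2+\tfrac{m-1}{m+d-3}\norm{\mathbb{X}_-u}^2.\]

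Substituting this bound into the previous inequality and rearranging gives $P\norm{\mathbb{X}_-u}^2+\tfrac{\kappa\lambda_m}{2}\norm{u}^2\leq Q\norm{\mathbb{X}_+u}^2$ with $P=(2m+d-3)+\tfrac{m-1}{m+d-3}$ and $Q=(2m+d-1)-\tfrac{m+d-1}{m+1}$ (valid for all $m\geq 1$ with the $m=1$ convention above, in which case $P=d-1$ and $Q=\tfrac{d+2}{2}$). Dividing by $P>0$ and taking $d_m=Q/P$ and $c_m=\tfrac{\kappa\lambda_m}{2P}$, it remains to check the stated values. A short computation gives $Q-P=\tfrac{(d-2)(4-d)}{(m+d-3)(m+1)}$, so $Q\leq P$ and one may take $d_m=1$ whenever $d\neq 3$ and $m\geq 2$; for $d=3$ one computes $Q/P=1+\tfrac{1}{(m+1)^2(2m-1)}$, and the $m=1$ value is $\tfrac{d+2}{2d-2}$ (which equals $\tfrac54$ when $d=3$). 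Finally $c_m=\tfrac{\kappa\lambda_m}{2P}\geq \tfrac{\kappa m}{4}$ is equivalent to $2\lambda_m\geq mP$, i.e.\ $d-1\geq \tfrac{m-1}{m+d-3}$, which holds for all $m\geq 1$, $d\geq 2$ (with equality for $d=2$).

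The only delicate point is sharpness of the constants: the term $\norm{\hd\,^{\mc{E}}u}^2$ must be retained and estimated precisely via Lemma \ref{lemma_nablah_nablav_innerproduct} — discarding it would yield only $d_m=\tfrac{2m+d-1}{2m+d-3}>1$, which is too weak to serve as a contraction — and the anomalous factor in dimension $3$ is an artifact of this bookkeeping. Everything else is Cauchy--Schwarz together with the spherical-harmonics eigenvalue identities already recorded in the excerpt.
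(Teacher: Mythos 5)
Your proof is correct and follows essentially the same route as the paper's: apply the Pestov identity of Proposition \ref{prop_pestov_omegam} on $\Omega_m$, bound the $R$-term by $\kappa\lambda_m\norm{u}^2$ from negative curvature, absorb the $F^{\mc{E}}$-term by Cauchy--Schwarz using the hypothesis on $m$, and lower-bound $\norm{\hd\,^{\mc{E}}u}^2$ via the orthogonal decomposition of Lemma \ref{lemma_nablah_nablav_innerproduct} before rearranging. The only (welcome) difference is that you carry out explicitly the elementary algebra verifying the stated $c_m$ and $d_m$, which the paper leaves implicit.
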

\begin{proof}
Let $u \in \Omega_m$. From Proposition \ref{prop_pestov_omegam} we have the identity 
\begin{equation*}
(2m+d-3)\norm{\mathbb{X}_{-}u}^{2}+\norm{\hd\,^{\mc{E}} u}^{2}-(R\vd\, ^{\mc{E}}u,\vd\, ^{\mc{E}}u)-(F^{\mc{E}}u,\vd\, ^{\mc{E}}u)=(2m+d-1)\norm{\mathbb{X}_{+}u}^{2}.
\end{equation*}
By Lemma \ref{lemma_nablah_nablav_innerproduct} 
\begin{align*}
\norm{\hd\,^{\mc{E}} u}^{2} &= \norm{\frac{1}{m+1} \vd\, ^{\mc{E}}\mathbb{X}_+ u - \frac{1}{m+d-3} \vd\, ^{\mc{E}}\mathbb{X}_- u + Z(u)}^2 \\
 &\geq \frac{m+d-1}{m+1} \norm{\mathbb{X}_+ u}^2 + \frac{m-1}{m+d-3} \norm{\mathbb{X}_- u}^2
\end{align*}
using that $Z(u)$ is $L^2$-orthogonal to the $\vd\, ^{\mc{E}}(\,\cdot\,)$ terms because $\vdiv\,^{\mc{E}}Z(u) = 0$. Thus 
\begin{multline*}
\left[ 2m+d-3 + \frac{m-1}{m+d-3} \right] \norm{\mathbb{X}_{-}u}^{2}-(R\vd\, ^{\mc{E}}u,\vd\, ^{\mc{E}}u)-(F^{\mc{E}}u,\vd\, ^{\mc{E}}u) \\
 \leq \left[ 2m+d-1- \frac{m+d-1}{m+1} \right] \norm{\mathbb{X}_{+}u}^{2}.
\end{multline*}
The issue is to show that for large $m$, the term involving $R$ wins over the term involving $F^{\mc{E}}$. Indeed, the assumption on sectional curvature yields 
\[
-(R\vd\, ^{\mc{E}}u,\vd\, ^{\mc{E}}u) \geq \kappa \norm{\vd\, ^{\mc{E}}u}^{2}=\kappa \lambda_m \norm{u}^{2}.
\]
On the other hand, since $M$ is compact we have 
\[
|(F^{\mc{E}} u,\vd\, ^{\mc{E}}u)|\leq \norm{F^{\mc{E}}}_{L^{\infty}} \norm{u}\norm{\vd\, ^{\mc{E}}u}= \norm{F^{\mc{E}}}_{L^{\infty}} \lambda_m^{1/2}\norm{u}^{2}.
\]
The assumption on $m$ implies that $\frac{\kappa}{2} \lambda_m \geq \norm{F^{\mc{E}}}_{L^{\infty}} \lambda_m^{1/2}$, which gives 
\[
-(R\vd\, ^{\mc{E}}u,\vd\, ^{\mc{E}}u)-(F^{\mc{E}}u,\vd\, ^{\mc{E}}u) \geq \frac{\kappa}{2} \lambda_m \norm{u}^2.
\]
Putting these facts together implies that 
\[
\norm{\mathbb{X}_{-}u}^{2} + c_m \norm{u}^2 \leq d_m \norm{\mathbb{X}_{+}u}^{2}
\]
where $c_m$ and $d_m$ may be chosen as stated.
\end{proof}

\begin{proof}[Second proof of Theorem \ref{thm:finitedegree}]
Let $\mathbb{X}u = f$ where $f$ has degree $l$. Looking at Fourier coefficients we have $(\mathbb{X} u)_k = 0$ for $k \geq l+1$, meaning that 
$$
\mathbb{X}_+ u_k = -\mathbb{X}_- u_{k+2}, \qquad k \geq l.
$$
Let $m \geq l$ and let also $m$ satisfy the condition in Lemma \ref{lemma_beurling_contraction}. Using Lemma \ref{lemma_beurling_contraction} and the identity above repeatedly, we obtain for any $N \geq 0$ that 
\begin{align*}
 &\norm{\mathbb{X}_- u_m}^2 + c_m \norm{u_m}^2 \leq d_m \norm{\mathbb{X}_+ u_m}^2 = d_m \norm{\mathbb{X}_- u_{m+2}}^2 \\
 &\leq d_m d_{m+2} \norm{\mathbb{X}_+ u_{m+2}}^2 = d_m d_{m+2} \norm{\mathbb{X}_- u_{m+4}}^2 \leq \ldots \leq \left[ \prod_{j=0}^N d_{m+2j} \right] \norm{\mathbb{X}_- u_{m+2N+2}}.
\end{align*}
Since $\mathbb{X}_- u \in L^2$, we have $\norm{\mathbb{X}_- u_k} \to 0$ as $k \to \infty$. Also, the constant $\prod_{j=0}^N d_{m+2j}$ stays finite as $N \to \infty$. This shows that $u_m = 0$ for $m$ sufficiently large.
\end{proof}

Another immediate consequence of Lemma \ref{lemma_beurling_contraction} is the following theorem, which implies Theorem \ref{thm_main_ckt_hd} when combined with the conformal invariance discussed at the end of Section \ref{sec_pestov}.

\begin{Theorem}
Let $(M,g)$ be a closed manifold satisfying $K \leq -\kappa$ for some $\kappa > 0$. Let $\mc{E}$ be a Hermitian bundle with Hermitian connection $\nabla$, and assume that $m \geq 1$ satisfies 
\[
m(m+d-2) \geq \frac{4 \norm{F^{\mc{E}}}_{L^{\infty}}^2}{\kappa^2}.
\]
Then any $u \in \Omega_m$ satisfying $\mathbb{X}_+ u = 0$ must be identically zero.
\end{Theorem}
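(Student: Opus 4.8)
The plan is to read off the result directly from Lemma \ref{lemma_beurling_contraction}. The hypothesis imposed on $m$, namely $m(m+d-2) \geq 4\norm{F^{\mc{E}}}_{L^{\infty}}^2/\kappa^2$, is precisely $\lambda_m \geq 4\norm{F^{\mc{E}}}_{L^{\infty}}^2/\kappa^2$, which is exactly the condition under which Lemma \ref{lemma_beurling_contraction} applies. So first I would apply that lemma to the given $u \in \Omega_m$, obtaining
\[
\norm{\mathbb{X}_{-}u}^{2} + c_m \norm{u}^2 \leq d_m \norm{\mathbb{X}_{+}u}^{2}
\]
with $c_m = \kappa m/4 > 0$ and $d_m > 0$. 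Then I would use the hypothesis $\mathbb{X}_+ u = 0$ to conclude that the right-hand side vanishes, so $\norm{\mathbb{X}_- u}^2 + c_m\norm{u}^2 \leq 0$; since $c_m > 0$ this forces $\norm{u} = 0$, i.e.\ $u \equiv 0$.

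I do not expect any real obstacle here: the entire substance of the statement is already packaged in Lemma \ref{lemma_beurling_contraction}, which itself rests on the $\Omega_m$-version of the Pestov identity with connection (Proposition \ref{prop_pestov_omegam}) together with the lower bound for $\norm{\hd\,^{\mc{E}} u}^2$ supplied by Lemma \ref{lemma_nablah_nablav_innerproduct}. The one point worth stressing in the write-up is the role of the strictly positive coefficient $c_m$: it is produced by the fact that, for $m$ in the stated range, the negative-curvature contribution $-\kappa\lambda_m\norm{u}^2$ in the Pestov identity strictly dominates the connection-curvature contribution, which is only $O(\norm{F^{\mc{E}}}_{L^{\infty}}\lambda_m^{1/2}\norm{u}^2)$. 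This gap is exactly what upgrades the qualitative Beurling-type contraction $\norm{\mathbb{X}_- u} \leq \sqrt{d_m}\,\norm{\mathbb{X}_+ u}$ to an estimate that annihilates $u$ outright once $\mathbb{X}_+ u = 0$, and so yields the absence of twisted conformal Killing tensors of degree $m$ in this range.

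Finally, as noted in the text, combining this with the conformal invariance of $\dim \mathrm{Ker}(\mathbb{X}_+|_{\Omega_m})$ established at the end of Section \ref{sec_pestov} gives Theorem \ref{thm_main_ckt_hd}: if the conformal class of $(M,g)$ contains a negatively curved metric $\tilde g$ with $\tilde K \leq -\kappa$, one applies the theorem in the metric $\tilde g$ with $m_0$ chosen so that $m_0(m_0+d-2) \geq 4\norm{F^{\mc{E}}}_{L^{\infty}}^2/\kappa^2$ (computed for $\tilde g$), and one may take $m_0 = 1$ when $\norm{F^{\mc{E}}}_{L^{\infty}}$ is small enough that $d-1 \geq 4\norm{F^{\mc{E}}}_{L^{\infty}}^2/\kappa^2$.
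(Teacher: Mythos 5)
Your argument is exactly what the paper intends: the theorem is stated there as ``an immediate consequence of Lemma \ref{lemma_beurling_contraction},'' and applying that lemma to $u \in \Omega_m$ with $\mathbb{X}_+ u = 0$ gives $\norm{\mathbb{X}_- u}^2 + c_m \norm{u}^2 \leq 0$ with $c_m = \kappa m/4 > 0$, forcing $u \equiv 0$. Your proposal matches the paper's approach and is correct.
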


In the rest of this section, we explain how to include a Higgs field in Theorem \ref{thm:finitedegree}:

\begin{Theorem} \label{thm:finitedegree_higgs}
Let $(M,g)$ be a compact manifold with negative sectional curvature, with or without boundary, let 
$(\mc{E},\nabla^{\mc{E}})$ be a Hermitian bundle over $M$ with Hermitian connection, 
and let $\Phi$ be a skew-Hermitian Higgs field. Suppose $u\in C^{\infty}(SM;\mc{E})$ 
(with $u|_{\partial(SM)}=0$ in the boundary case) solves
\[(\mathbb{X}+\Phi) u=f\]
where $f$ has finite degree. Then $u$ has finite degree.
\end{Theorem}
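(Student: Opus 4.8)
The plan is to adapt the second proof of Theorem~\ref{thm:finitedegree} (the one built on the Beurling-type contraction of Lemma~\ref{lemma_beurling_contraction}), treating the Higgs field as a bounded, degree-preserving perturbation. Pulled back to $SM$, the field $\Phi$ acts fibrewise, so $\Phi:\Omega_m\to\Omega_m$ for every $m$, and since $\Phi$ is skew-Hermitian one has $(\Phi w,w)\in i\mathbb{R}$ for all sections $w$. Writing $u=\sum_m u_m$, $f=\sum_m f_m$ with $f_m=0$ for $m>l:=\deg f$, and taking the degree-$k$ component of $(\mathbb{X}+\Phi)u=f$, one gets
\[
\mathbb{X}_+u_{k-1}+\mathbb{X}_-u_{k+1}+\Phi u_k=0,\qquad k\ge l+1,
\]
that is $\mathbb{X}_+u_{k-1}=-\mathbb{X}_-u_{k+1}-\Phi u_k$ for $k\ge l+1$. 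Compared with the case $\Phi=0$, where this reads $\mathbb{X}_+u_{k-1}=-\mathbb{X}_-u_{k+1}$, the only new term is $\Phi u_k$, which is $L^\infty$-bounded and leaves the degree unchanged.

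For $m$ large enough that Lemma~\ref{lemma_beurling_contraction} applies we have $\|\mathbb{X}_-u_m\|^2+c_m\|u_m\|^2\le d_m\|\mathbb{X}_+u_m\|^2$ with $c_m=\kappa m/4\to\infty$ and $d_m$ bounded (and $\to1$). The mechanism to exploit is the same one that absorbs the connection curvature $F^{\mc{E}}$ in Lemmas~\ref{lemma_pestov_negative1} and~\ref{lemma_beurling_contraction}: the defect $c_m\|u_m\|^2$ grows linearly in $m$, whereas $\Phi$ contributes only the fixed quantity $\|\Phi\|_{L^\infty}$. Concretely, combining the Beurling inequality at level $m$ with the mode relation and the Beurling inequality at level $m+1$ lets one bound $\|u_{m+1}\|$ by $O(m^{-1/2})$ times a tail such as $\sup_{k\ge m}\|\mathbb{X}_+u_k\|$; inserting this back into $\mathbb{X}_+u_{k-1}=-\mathbb{X}_-u_{k+1}-\Phi u_k$ makes the Higgs term genuinely lower order in the recursion, so that iterating the relation up the Fourier tower exactly as in the second proof of Theorem~\ref{thm:finitedegree} forces $\mathbb{X}_+u_m=0$ for all sufficiently large $m$. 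By the theorem stated just after Lemma~\ref{lemma_beurling_contraction}, $\mathbb{X}_+$ is injective on $\Omega_m$ once $m$ is large, so $u_m=0$ for $m$ large, i.e. $u$ has finite degree, which is the assertion of Theorem~\ref{thm:finitedegree_higgs}.

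The step I expect to be the main obstacle is keeping the amplification constants along the Fourier tower bounded: a crude Young's inequality splits $\|\mathbb{X}_-u_{m+2}+\Phi u_{m+1}\|^2$ with a factor $1+\eps>1$, and iterating such factors need not yield a bounded product. Two ingredients should control this. First, one should choose the Young parameter depending on $m$, which is permitted because the $\Phi$-term already carries a weight $O(m^{-1/2})$ after the previous step. Second, one uses the skew-Hermitian structure of $\Phi$ to kill the dangerous cross-term: pairing $\mathbb{X}_+u_{k-1}+\mathbb{X}_-u_{k+1}+\Phi u_k=0$ with $u_k$ and taking real parts makes $(\Phi u_k,u_k)$ drop out (it is purely imaginary) and, using $\mathbb{X}_\pm^*=-\mathbb{X}_\mp$, gives $\mathrm{Re}\,(\mathbb{X}_-u_{k+1},u_k)=\mathrm{Re}\,(\mathbb{X}_-u_k,u_{k-1})$ for $k\ge l+1$; since $\mathbb{X}_-u_k\to0$ in $L^2$ this constant sequence must vanish, so $\mathrm{Re}\,(\mathbb{X}_-u_{k+1},u_k)=0$ for all $k\ge l$, which removes the cross-term when the mode relation is squared. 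A variant of the first proof of Theorem~\ref{thm:finitedegree} should also work: apply the Pestov identity with connection (Proposition~\ref{prop_pestov}) to $u$, substitute $\mathbb{X}u=f-\Phi u$ and use $\vd^{\mc{E}}(\Phi u)=\Phi\,\vd^{\mc{E}}u$ together with $(d-1)\|\mathbb{X}u\|^2=(d-1)\|(\mathbb{X}+\Phi)u-\Phi u\|^2$; restricting to $T_{\ge m}u$ and noting $T_{\ge m+1}f=0$ for $m\ge l$, the argument again reduces to the curvature term's linearly growing lower bound overtaking the fixed contribution of $\Phi$ once $m$ is large, the same bookkeeping being needed to allow $\Phi$ of arbitrary size.
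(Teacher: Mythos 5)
Your overall plan — start from the Beurling-type contraction in Lemma~\ref{lemma_beurling_contraction}, feed in the mode relation $\mathbb{X}_+ u_{k-1} + \mathbb{X}_- u_{k+1} + \Phi u_k = 0$, and iterate up the Fourier tower — is precisely the route the paper takes. You also correctly flag the danger spot: the cross-term produced when the mode relation is squared. But the two mechanisms you propose to kill that cross-term do not actually do so, and this is where the real content of the proof lies.

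Squaring $\mathbb{X}_- u_{m+1} = -\mathbb{X}_+ u_{m-1} - \Phi u_m$ and inserting it into the Beurling inequality at level $m+1$ yields
\[
\norm{\mathbb{X}_+ u_{m+1}}^2 \geq \norm{\mathbb{X}_+ u_{m-1}}^2 + c_{m+1}\norm{u_{m+1}}^2 + \norm{\Phi u_m}^2 + 2\,\mathrm{Re}(\mathbb{X}_+ u_{m-1}, \Phi u_m).
\]
The cross-term that must be absorbed is $\mathrm{Re}(\mathbb{X}_+ u_{m-1}, \Phi u_m)$, and it carries a factor of $\Phi$. Your observation, obtained by pairing the mode relation with $u_k$ and using $(\Phi u_k,u_k)\in i\mathbb{R}$ and $\mathbb{X}_+^*=-\mathbb{X}_-$, is that $\mathrm{Re}(\mathbb{X}_- u_{k+1}, u_k)$ is constant for large $k$ and hence zero. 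That identity is true, but it is a different inner product: there is no $\Phi$ in it, and it does not imply anything about $\mathrm{Re}(\mathbb{X}_+ u_{m-1}, \Phi u_m)$ or $\mathrm{Re}(\mathbb{X}_- u_{m+1}, \Phi u_m)$. So the cross-term does not disappear, and a naive Young's inequality on it introduces $(1+\eps)$-factors whose product you have no uniform way to control — exactly the obstacle you were worried about.

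What the paper actually does is prove an auxiliary identity (its Lemma~\ref{lemma:aux}) which exploits the skew-Hermitian structure one step deeper: writing $(\mathbb{X}_+ u_{m-1},\Phi u_m) = -(u_{m-1},\mathbb{X}_-(\Phi u_m))$, expanding $\mathbb{X}_-(\Phi u_m) = i^-_{\mathbb{X}\Phi}u_m + \Phi\,\mathbb{X}_- u_m$, then substituting the mode relation one more time for $\mathbb{X}_- u_m$, one finds
\[
(\mathbb{X}_+ u_{m-1}, \Phi u_m) + \overline{(\mathbb{X}_+ u_{m-2}, \Phi u_{m-1})} = -(u_{m-1}, i^-_{\mathbb{X}\Phi} u_m) - \norm{\Phi u_{m-1}}^2.
\]
Summing the displayed inequality over two consecutive levels then makes the bad cross-terms \emph{telescope} against one another, and what survives is a fixed-size error $-B\norm{u_{m-1}}^2 - C\norm{u_m}^2$ that the growing constant $c_m$ easily overtakes, without any Young-type amplification. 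This two-level coupling is the missing ingredient in your argument.

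Your fallback — redoing the first proof of Theorem~\ref{thm:finitedegree} with $\mathbb{X}u = f - \Phi u$ — also does not work for large $\Phi$. If $u$ is replaced by $T_{\geq m}u$ and $m > \deg f$, the term $T_{\geq m+1}\mathbb{X}u$ on the left of Lemma~\ref{lemma_pestov_negative1} is no longer zero: it is (up to a single low mode) $-\Phi T_{\geq m+1} u$, and
$\norm{\vd^{\mc{E}}(\Phi T_{\geq m+1} u)}^2 \leq \norm{\Phi}^2_{L^\infty}\norm{\vd^{\mc{E}} T_{\geq m+1} u}^2$
is of the same order as the curvature gain $\kappa\norm{\vd^{\mc{E}}u}^2$, with no decay in $m$. (This is in contrast to the connection curvature term, which enters as $(F^{\mc{E}}u,\vd^{\mc{E}}u)$ and picks up a genuine $\lambda_m^{-1/2}$.) So this approach only works when $\norm{\Phi}^2_{L^\infty} < \kappa$, not for an arbitrary Higgs field; the theorem needs the Livsic-type telescoping of Lemma~\ref{lemma:aux}.
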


We will follow the strategy in \cite{P1} which considered the case where $\dim(M)=2$. Again we will only do the proof for closed manifolds (the boundary case is identical as long as we insist that $u|_{\partial(SM)}=0$).

\begin{proof}[Proof of Theorem \ref{thm:finitedegree_higgs}]
We first assume that $d \neq 3$ (the case $d=3$ is a little different). By Lemma \ref{lemma_beurling_contraction}, since the sectional curvature of $M$ is negative, there exist constants $c_m>0$ with $c_{m}\to\infty$ as $m \to \infty$�and a positive integer $l$ such that
\begin{equation}
\norm{\mathbb{X}_{+}u_m}^2\geq \norm{\mathbb{X}_{-}u_{m}}^2+c_{m}\norm{u_{m}}^2
\label{eq:basicineq}
\end{equation}
for all $m\geq l$ and $u_m\in\Omega_m$. Write $u=\sum u_m$. We know that for all $m$ sufficiently large
\begin{equation}
\mathbb{X}_{+}u_{m-1}+\mathbb{X}_{-}u_{m+1}+\Phi u_m=0.
\label{eq:recurrence}
\end{equation}
Combining (\ref{eq:basicineq}) and (\ref{eq:recurrence}) we derive
\begin{equation}
\norm{\mathbb{X}_{+}u_{m+1}}^2\geq \norm{\mathbb{X}_{+}u_{m-1}}^2+c_{m+1}\norm{u_{m+1}}^2+\norm{\Phi u_m}^{2}
+2\mathrm{Re}(\mathbb{X}_{+}u_{m-1},\Phi u_m).
\label{eq:combined}
\end{equation}
The rest of the proof hinges on controlling the term $\mathrm{Re}(\mathbb{X}_{+}u_{m-1},\Phi u_m)$.

Given an element $\alpha\in\Omega_1$ we write $i_{\alpha}u:=\alpha u$. Multiplication by an element of degree one has the following property: if $u\in\Omega_m$, then $i_{\alpha}u\in \Omega_{m-1}\oplus\Omega_{m+1}$ and hence we may write $i_{\alpha}u=i^{-}_{\alpha}u+i^{+}_{\alpha}u$
where $i^{\pm}_{\alpha}u\in\Omega_{m\pm1}$. For a smooth section $U$ of the bundle $\mc{F}:={\rm End}(\mc{E})$ 
we write $\mathbb{X}U\in C^{\infty}(M; \mc{F})$ for the element $(\mathbb{X}U)f:=\mathbb{X}(Uf)-U(\mathbb{X}f)$ if 
$f\in C^{\infty}(M;\mc{E})$ is any section (this corresponds to $\nabla^{\mc{F}}_X U$ where $\nabla^{\mc{F}}$ is the natural connection induced by $\nabla^{\mc{E}}$ on $\mc{F}$).
In a local trivialization where we write $\nabla^{\mc{E}}=d+A$ for some connection $1$-form, 
one has $\mathbb{X}U=XU+[A,U]$.

We now prove an auxiliary lemma:

\begin{Lemma} \label{lemma:aux}
The following identity holds for $\Phi$ skew-Hermitian:
\[(\mathbb{X}_{+}u_{m-1},\Phi u_{m})+\overline{(\mathbb{X}_{+}u_{m-2},\Phi u_{m-1})}
=-(u_{m-1},i^{-}_{\mathbb{X}\Phi}u_m)-\norm{\Phi u_{m-1}}^{2}.\]
\end{Lemma}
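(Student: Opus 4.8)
The plan is to prove Lemma \ref{lemma:aux} by a direct computation using the adjoint relations $\mathbb{X}_+^*=-\mathbb{X}_-$ and the skew-Hermitian property $\Phi^*=-\Phi$, together with the Leibniz rule for $\mathbb{X}$ acting on the composition $\Phi u$. The key observation is that since $\Phi$ is a section of $\mathrm{End}(\mc{E})$ of degree $0$, multiplication by $\Phi$ preserves each $\Omega_m$; in contrast, the operator $\mathbb{X}\Phi = \nabla^{\mc{F}}_X\Phi$ is a section of $\mathrm{End}(\mc{E})$ of degree $1$ (it picks up the factor of $v$ from $X$), so $i_{\mathbb{X}\Phi}$ maps $\Omega_m$ into $\Omega_{m-1}\oplus\Omega_{m+1}$, and we must extract its $\Omega_{m-1}$-component, which is what $i^-_{\mathbb{X}\Phi}$ denotes.

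First I would move $\mathbb{X}_+$ off of $u_{m-1}$ in the first term: $(\mathbb{X}_+u_{m-1},\Phi u_m) = -(u_{m-1},\mathbb{X}_-(\Phi u_m))$. Now $\mathbb{X}(\Phi u_m) = (\mathbb{X}\Phi)u_m + \Phi(\mathbb{X}u_m)$ by the Leibniz rule defining $\mathbb{X}$ on $\mathrm{End}(\mc{E})$. Projecting onto $\Omega_{m-1}$: the term $\Phi(\mathbb{X}u_m)$ contributes $\Phi(\mathbb{X}_-u_m)\in\Omega_{m-1}$ since $\Phi$ preserves degree, while $(\mathbb{X}\Phi)u_m$ contributes $i^-_{\mathbb{X}\Phi}u_m$. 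Hence
\[
(\mathbb{X}_+u_{m-1},\Phi u_m) = -(u_{m-1},i^-_{\mathbb{X}\Phi}u_m) - (u_{m-1},\Phi\mathbb{X}_-u_m).
\]
It remains to identify $-(u_{m-1},\Phi\mathbb{X}_-u_m)$ with $\overline{(\mathbb{X}_+u_{m-2},\Phi u_{m-1})} - \norm{\Phi u_{m-1}}^2$ (after conjugating appropriately). Using that $\Phi$ is skew-Hermitian, $(u_{m-1},\Phi\mathbb{X}_-u_m) = -(\Phi u_{m-1},\mathbb{X}_-u_m) = -\overline{(\mathbb{X}_-u_m,\Phi u_{m-1})}$. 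The recurrence relation \eqref{eq:recurrence} at level $m$, namely $\mathbb{X}_+u_{m-1}+\mathbb{X}_-u_{m+1}+\Phi u_m=0$ — or rather at the appropriate shifted level so that $\mathbb{X}_-u_m$ appears, i.e.\ $\mathbb{X}_+u_{m-2}+\mathbb{X}_-u_m+\Phi u_{m-1}=0$ — lets me substitute $\mathbb{X}_-u_m = -\mathbb{X}_+u_{m-2}-\Phi u_{m-1}$, giving $(\mathbb{X}_-u_m,\Phi u_{m-1}) = -(\mathbb{X}_+u_{m-2},\Phi u_{m-1}) - \norm{\Phi u_{m-1}}^2$. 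Taking conjugates and combining yields exactly the claimed identity.

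The main obstacle — really the only delicate point — is getting the degree-projection bookkeeping exactly right: one must be careful that $\mathbb{X}_-(\Phi u_m)$ is genuinely the full $\Omega_{m-1}$-component of $\mathbb{X}(\Phi u_m)$, that $\Phi\mathbb{X}u_m$ splits cleanly as $\Phi\mathbb{X}_+u_m + \Phi\mathbb{X}_-u_m$ with the two pieces landing in $\Omega_{m+1}$ and $\Omega_{m-1}$ respectively (which uses that $\Phi$ has degree $0$), and that the lower-order term $(\mathbb{X}\Phi)u_m$ contributes only through its $i^-$ part after projection. One also needs to confirm that \eqref{eq:recurrence} does hold at the level involving $\mathbb{X}_-u_m$ — which it does, for all $m$ sufficiently large, since $(\mathbb{X}+\Phi)u=f$ has finite degree. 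Everything else is routine manipulation of inner products using $\mathbb{X}_+^*=-\mathbb{X}_-$ and $\Phi^*=-\Phi$, with attention to complex conjugates since the inner product is Hermitian.
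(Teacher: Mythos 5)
Your proof is correct and takes essentially the same route as the paper: move $\mathbb{X}_+$ off $u_{m-1}$ via the adjoint relation, apply the Leibniz rule to $\mathbb{X}(\Phi u_m)$ and project onto degree $m-1$, then invoke skew-Hermiticity and the recurrence $\mathbb{X}_+u_{m-2}+\mathbb{X}_-u_m+\Phi u_{m-1}=0$. The only difference is the order of two interchangeable steps (you move $\Phi$ across the inner product before substituting the recurrence, the paper substitutes first), which is immaterial.
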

\begin{proof} We observe first that
\[\mathbb{X}(\Phi u_m)=(\mathbb{X}\Phi) u_{m}+\Phi \mathbb{X}u_m=i_{\mathbb{X}\Phi}^{-}u_{m}+\Phi \mathbb{X}_{-}u_{m}
+i_{\mathbb{X}\Phi}^{+}u_{m}+\Phi \mathbb{X}_{+}u_{m}\]
and thus
\[\mathbb{X}_{-}(\Phi u_m)=i_{\mathbb{X}\Phi}^{-}u_{m}+\Phi \mathbb{X}_{-}u_{m}.\]
Now compute using the above, (\ref{eq:recurrence}) and $\Phi$ skew-Hermitian:
\begin{align*}
(\mathbb{X}_{+}u_{m-1},\Phi u_{m})&=-(u_{m-1},\mathbb{X}_{-}(\Phi u_m))\\
&=-(u_{m-1},i^{-}_{\mathbb{X}\Phi}u_{m})-(u_{m-1},\Phi \mathbb{X}_{-}u_{m})\\
&=-(u_{m-1},i^{-}_{\mathbb{X}\Phi}u_{m})+(u_{m-1},\Phi(\Phi u_{m-1}+\mathbb{X}_{+}u_{m-2}))\\
&=-(u_{m-1},i^{-}_{\mathbb{X}\Phi}u_{m})-\norm{\Phi u_{m-1}}^{2}+(u_{m-1},\Phi \mathbb{X}_{+}u_{m-2})\\
&=-(u_{m-1},i^{-}_{\mathbb{X}\Phi}u_{m})-\norm{\Phi u_{m-1}}^{2}-(\Phi u_{m-1},\mathbb{X}_{+}u_{m-2})\\
\end{align*}
and the lemma is proved.
\end{proof}

The lemma suggests to consider (\ref{eq:combined}) for $m$ and $m-1$. Adding them we derive:

\begin{align*}
\norm{\mathbb{X}_{+}u_{m+1}}^2+\norm{\mathbb{X}_{+}u_{m}}^2&\geq\norm{\mathbb{X}_{+}u_{m-1}}^2+c_{m+1}\norm{u_{m+1}}^2+\norm{\Phi u_m}^{2}\\
&+\norm{\mathbb{X}_{+}u_{m-2}}^2+c_{m}\norm{u_{m}}^2+\norm{\Phi u_{m-1}}^{2}\\
&+2\mathrm{Re}(\mathbb{X}_{+}u_{m-1},\Phi u_m)+2\mathrm{Re}(\mathbb{X}_{+}u_{m-2},\Phi u_{m-1}).
\end{align*}
If we set $a_{m}:=\norm{\mathbb{X}_{+}u_{m}}^2+\norm{\mathbb{X}_{+}u_{m-1}}^2$ and we use Lemma \ref{lemma:aux}
we obtain
\begin{align*}
a_{m+1}&\geq a_{m-1}+c_{m+1}\norm{u_{m+1}}^2+c_{m}\norm{u_{m}}^2+\norm{\Phi u_{m}}^{2}-\norm{\Phi u_{m-1}}^{2}-2\mathrm{Re}(u_{m-1},i_{\mathbb{X}\Phi}^{-}u_{m})\\
&\geq a_{m-1}+c_{m+1}\norm{u_{m+1}}^2+c_{m}\norm{u_{m}}^2+\norm{\Phi u_{m}}^{2}-\norm{\Phi u_{m-1}}^{2}-\norm{u_{m-1}}^{2}-\norm{i_{\mathbb{X} \Phi}^{-}u_{m}}^{2}.
\end{align*}

Since $M$ is compact there exist positive constants $B$ and $C$ such that
\begin{align*}
&\norm{\Phi f}^{2}\leq (B-1)\norm{f}^{2}\\
&\norm{i^{-}_{\mathbb{X}\Phi}f}^{2}\leq C\norm{f}^{2}
\end{align*}
for any $f\in C^{\infty}(SM;\mc{E})$. Therefore
\[a_{m+1}\geq a_{m-1}+r_{m}\]
where
\[r_{m}:=-B\norm{u_{m-1}}^{2}+c_{m+1}\norm{u_{m+1}}^{2}+(c_{m}-C)\norm{u_{m}}^{2}.\]
Now choose a positive integer $N_{0}$ large enough so that for $m\geq N_{0}$ equations (\ref{eq:basicineq}) and (\ref{eq:recurrence}) hold and we have
\[c_{m}>\max\{B,C\}.\]
Let $m=N+1+2k$, where $k$ is a non-negative integer and $N$ is an integer with $N\geq N_0$. Note that from the definition
of $r_m$ and our choice of $N$ we have
\[r_{m}+r_{m-2}+\cdots+r_{N+1}\geq -B\norm{u_{N}}^2.\]
Thus
\[a_{m+1}\geq a_{N}+r_{m}+r_{m-2}+\cdots+r_{N+1}\geq a_{N}-B\norm{u_{N}}^2.\]
From the definition of $a_{m}$ and (\ref{eq:basicineq}) we know that
$a_{N}\geq c_{N}\norm{u_{N}}^{2}$ and hence
\[a_{m+1}\geq (c_{N}-B)\norm{u_{N}}^{2}.\]
Since the function $u$ is smooth, also $\mathbb{X}_+ u$ is smooth and $\mathbb{X}_{+}(u_m) = (\mathbb{X}_{+} u)_m$ must tend to zero
in the $L^{2}$-topology as $m\to\infty$. Hence $a_{m+1}\to 0$
as $k\to\infty$ which in turns implies that $u_{N}=0$ for any $N\geq N_0$, thus concluding that
$u$ has finite degree as desired.

We briefly indicate the modifications for $\dim(M)=3$. Inequality (\ref{eq:basicineq}) changes to
\begin{equation}
d_m\norm{\mathbb{X}_{+}u_m}^2\geq \norm{\mathbb{X}_{-}u_{m}}^2+c_{m}\norm{u_{m}}^2
\label{eq:basicineq3}
\end{equation}
where
\[d_m=1+\frac{1}{(m+1)^2(2m-1)}.\]
With the same definitions of $a_m$ and $r_m$ as above one arrives at the inequality
\[d_{m}a_{m+1}\geq a_{m-1}+r_m.\]
With this inequality one derives ($d_m\geq 1$ for all $m$):
\[ \left(\prod_{j=0}^{k}d_{m-2j}\right)a_{m+1}\geq a_{N}+r_{m}  +\cdots+r_{N+1}\geq a_{N}-B\norm{u_{N}}^2.\]
From the definition of $a_{m}$ and (\ref{eq:basicineq3}) we know that
$d_{N}a_{N}\geq c_{N}\norm{u_{N}}^{2}$ and hence
\[ \left(\prod_{j=0}^{k}d_{m-2j}\right)a_{m+1}\geq (\frac{c_{N}}{d_{N}}-B)\norm{u_{N}}^{2}.\]
Now we need to choose $N_0$ such that $\frac{c_{N_0}}{d_{N_0}}-B>0$. This is possible since $c_m\to\infty$ and
$d_m\to 1$.
Since the function $u$ is smooth, $\mathbb{X}_{+}(u_m)$ must tend to zero
in the $L^{2}$-topology as $m\to\infty$. Hence $a_{m+1}\to 0$
as $k\to\infty$ which in turns implies that $u_{N}=0$ for any $N\geq N_0$ 
since $\left(\prod_{j=0}^{\infty}d_{m_0+2j}\right)$ is a finite constant.

Thus $u$ has finite degree as desired also for $\dim M=3$.
\end{proof}

\section{Twisted CKTs and ray transforms} \label{sec_twisted_ckts_raytransform}

In Section \ref{sec_finite_degree} we proved the finite degree result, Theorem \ref{thm:finitedegree_higgs}. In this section we give the easy argument that improves this result in cases where there are no nontrivial twisted conformal Killing tensors.

Recall from the introduction that the absence of nontrivial twisted CKTs means that any $u \in \Omega_m$, $m \geq 1$, satisfying $\mathbb{X}_+ u = 0$ (with $u|_{\partial(SM)} = 0$ in the boundary case) must be identically zero.

\begin{Theorem} \label{thm_finitedegree_notwistedckts}
Let $(M,g)$ be a negatively curved compact manifold with or without boundary. Assume 
that the boundary is strictly convex if $\pl M\not=\emptyset$. Let $(\mc{E},\nabla^{\mc{E}})$ be a Hermitian bundle with Hermitian connection and let $\Phi$ be a skew-Hermitian Higgs field. Suppose that $f \in C^{\infty}(SM;\mc{E})$ has degree $m \geq 0$, and that $u \in C^{\infty}(SM;\mc{E})$ (with $u|_{\partial(SM)} = 0$ in the boundary case) solves the equation 
$$
(\mathbb{X}+\Phi)u = -f \text{ in } SM.
$$
If $u$ has finite degree, and if there are no nontrivial twisted CKTs, then $u$ has degree $\max\{m-1,0\}$. Furthermore, if $m=0$, then one has $u=0$ in the boundary case and $u \in \mathrm{Ker}(\mathbb{X}_+|_{\Omega_0})$ in the closed case.
\end{Theorem}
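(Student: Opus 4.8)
The plan is to reduce everything to the two facts already in hand: the finite degree theorem (Theorem~\ref{thm:finitedegree_higgs}) and the vanishing of boundary twisted CKTs (Theorem~\ref{main_thm_twistedckts_boundary}), supplemented by the no-twisted-CKT hypothesis in the closed case. First I would note that since $(M,g)$ is negatively curved, Theorem~\ref{thm:finitedegree_higgs} already guarantees that $u$ has finite degree (so this hypothesis is automatic here); write $u=\sum_{k=0}^{M}u_k$ with $u_k\in\Omega_k$. The only structural fact I need about $\Phi$ is that, being a Higgs field, it is $v$-independent, so that $\Phi$ maps $\Omega_k$ into $\Omega_k$; combined with $\mathbb{X}_\pm:\Omega_k\to\Omega_{k\pm1}$, this means that the degree-$(k+1)$ Fourier component of $(\mathbb{X}+\Phi)u$ equals $\mathbb{X}_+u_k+\mathbb{X}_-u_{k+2}+\Phi u_{k+1}$.

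Next I would run a descending argument on the top Fourier mode. Suppose $k_0$ is the largest index with $u_{k_0}\neq0$ and that $k_0\geq\max\{m,1\}$. Taking the degree-$(k_0+1)$ component of $(\mathbb{X}+\Phi)u=-f$ and using $u_{k_0+1}=u_{k_0+2}=0$ together with $f_{k_0+1}=0$ (valid since $k_0+1\geq m+1$) yields $\mathbb{X}_+u_{k_0}=0$. Thus $u_{k_0}$ is a twisted CKT of degree $k_0\geq1$. In the closed case the absence of nontrivial twisted CKTs forces $u_{k_0}=0$; in the boundary case I would observe that $u|_{\partial(SM)}=0$ implies $u_{k_0}|_{\partial(SM)}=0$, because the spherical-harmonic projection onto $\Omega_{k_0}$ is performed fiberwise over $M$ and $\partial(SM)=\pi^{-1}(\partial M)$, and then Theorem~\ref{main_thm_twistedckts_boundary} with $\Gamma=\partial M$ gives $u_{k_0}=0$. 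Either way this contradicts $u_{k_0}\neq0$, so in fact $u_k=0$ for all $k\geq\max\{m,1\}$, i.e.\ $u$ has degree $\max\{m-1,0\}$.

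Finally I would treat the case $m=0$ separately. By the previous step $u=u_0\in\Omega_0$, and the transport equation becomes $(\mathbb{X}+\Phi)u_0=-f_0$ with $f_0\in\Omega_0$. Its degree-$1$ component is $\mathbb{X}_+u_0=0$ (there is no $\Omega_{-1}$, so $\mathbb{X}_-u_0=0$). In the closed case this is precisely the assertion $u\in\mathrm{Ker}(\mathbb{X}_+|_{\Omega_0})$. In the boundary case $u_0|_{\partial(SM)}=0$ and $\mathbb{X}_+u_0=0$, so Theorem~\ref{main_thm_twistedckts_boundary} applied with $m=0$ and $\Gamma=\partial M$ gives $u_0=0$, as claimed. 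I do not expect any genuine obstacle: all the analytic content is carried by the quoted theorems, and the only points needing a little care are that $\Phi$ preserves each $\Omega_k$ (hence never contributes to the top mode) and that vanishing on $\partial(SM)$ descends to each Fourier component.
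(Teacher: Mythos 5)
Your proof is correct and follows essentially the same line as the paper's own argument: take the top nonzero Fourier mode $u_{k_0}$, note that the degree-$(k_0+1)$ equation gives $\mathbb{X}_+ u_{k_0}=0$ (using that $\Phi$ preserves each $\Omega_k$), and then invoke the vanishing of twisted CKTs (hypothesis in the closed case, Theorem~\ref{main_thm_twistedckts_boundary} in the boundary case) to derive a contradiction; then treat $m=0$ separately.

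The one place where you deviate from the paper is the final vanishing step for $m=0$ in the boundary case. The paper observes that $\mathbb{X}u_0=0$ with $u_0|_{\partial(SM)}=0$ and appeals to the transport-uniqueness statement in Proposition~\ref{Rpm0} (which requires negative curvature and strict convexity). You instead read the same equation as $\mathbb{X}_+u_0=0$ (valid since $\mathbb{X}_-|_{\Omega_0}=0$) and invoke Theorem~\ref{main_thm_twistedckts_boundary} with $m=0$, which is stated for $m\geq 0$ and whose proof does not need negative curvature. Both routes yield $u_0=0$; your choice is slightly more uniform with the $m\geq 1$ step (it uses the same CKT unique-continuation result throughout) and formally makes the $m=0$ conclusion independent of the curvature hypothesis, while the paper's choice fits naturally into the transport-equation framework it has already set up. Neither gap nor error is present.
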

\begin{proof}
Let first $m \geq 1$, and let $l$ be the largest integer for which $u_l$ is nonzero. The claim is that $l \leq m-1$, so we argue by contradiction and assume that $l \geq m$. Looking at the degree $l+1$ Fourier coefficients in the identity $(\mathbb{X}+\Phi)u = -f$, we obtain that 
$$
\mathbb{X}_+ u_l = 0.
$$
Since there are no nontrivial twisted CKTs (note that $u_l|_{\partial(SM)} = 0$ in the boundary case since $u|_{\partial(SM)} = 0$), we have $u_l=0$. This contradicts the fact that $u_l$ was the largest nonzero Fourier coefficient.

In the case $m=0$ the above argument shows that $u = u_0$, and the equation becomes 
$$
(\mathbb{X}+\Phi)u_0 = -f_0.
$$
Taking degree $1$ Fourier coefficients gives $\mathbb{X}u_0 = 0$. In the boundary case 
we have $u_0|_{\partial(SM)} = 0$ and by Proposition \ref{Rpm0} this implies that $u_0=0$ if the curvature is negative 
and $\pl M$ is strictly convex.
In the closed case the equation $\mathbb{X}u_0 = 0$ means that $u_0 \in \mathrm{Ker}(\mathbb{X}_+|_{\Omega_0})$.
\end{proof}

The injectivity result in the boundary case, Theorem \ref{mainthm_boundary1}, will require the absence of twisted conformal Killing tensors vanishing on the boundary. In other words we would like to prove:

\begin{Theorem}
Let $(M,g)$ be a Riemannian manifold and $(\mc{E},\nabla^{\mc{E}})$ a Hermitian bundle with connection. Let $\Gamma$ be a hypersurface.  Assume there is
$u\in\Omega_m$ with $\mathbb{X}_{+}u=0$ and $u|_{\pi^{-1}\Gamma}=0$. Then $u=0$.
\label{theorem:ACKT}
\end{Theorem}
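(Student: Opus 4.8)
The plan is to translate the statement into a claim about tensor fields and then prove a unique continuation result for the twisted conformal Killing equation. Using the isomorphism $\lambda\colon\Theta_m\to\Omega_m$ of Section~\ref{Identifi}, write $u=\lambda f$ with $f$ a trace free symmetric $m$--tensor with values in $\mc{E}$, and recall $\mathbb{X}_+\lambda f=\lambda\,\mc{P}D^{\mc{E}}f$ with $D^{\mc{E}}=\mc{S}\nabla^{\mc{E}}$. Thus $\mathbb{X}_+u=0$ says exactly that $\mc{S}\nabla^{\mc{E}}f$ has vanishing trace free part, i.e.
\[
\mc{S}\nabla^{\mc{E}}f=g\odot\theta
\]
for some symmetric $(m-1)$--tensor $\theta$ with values in $\mc{E}$; since $\mc{T}^2(\mc{S}\nabla^{\mc{E}}f)=0$ (a consequence of $\mc{T}f=0$), the tensor $\theta$ is automatically trace free, and taking one trace of the displayed equation identifies $\theta$ as a fixed nonzero multiple of $\mc{T}\nabla^{\mc{E}}f$, a universal first order expression in $f$. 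Moreover $u|_{\pi^{-1}\Gamma}=0$ is equivalent to $f|_\Gamma=0$, since $f_x=0$ iff $f_x(v,\dots,v)=0$ for all $v$. So it suffices to show that a twisted conformal Killing tensor $f$ with $f|_\Gamma=0$ vanishes on the connected manifold $M$; the case $m=0$ is immediate, since then the equation reads $\nabla^{\mc{E}}f=0$ and a parallel section vanishing on $\Gamma$ is zero by parallel transport, so assume $m\geq1$.

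The first step is boundary determination: $f$ vanishes to infinite order along $\Gamma$. Work in a collar with boundary normal coordinates $(y,t)$, so $g=dt^2+h_t(y)$ and $\Gamma=\{t=0\}$. Restricting $\mc{S}\nabla^{\mc{E}}f=g\odot\theta$ to $\{t=0\}$ and using $f|_\Gamma=0$ (hence all tangential derivatives of $f$ along $\Gamma$ vanish), one finds that $\mc{S}\nabla^{\mc{E}}f|_\Gamma$ is a fixed linear algebraic expression in $\partial_t f|_\Gamma$ and $g\odot\theta|_\Gamma$ a fixed linear algebraic expression in $\theta|_\Gamma$, while the relation $\theta=c\,\mc{T}\nabla^{\mc{E}}f$ expresses $\theta|_\Gamma$ linearly in $\partial_t f|_\Gamma$; substituting gives a linear system whose only solution is $\partial_t f|_\Gamma=\theta|_\Gamma=0$. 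The invertibility used here is nothing but the ellipticity (non--characteristicity of $\{t=0\}$) of the overdetermined operator $f\mapsto\mc{P}D^{\mc{E}}f$. Differentiating the equation $k$ times in $t$, restricting to $\Gamma$, and iterating yields $\partial_t^k f|_\Gamma=0$ for every $k$.

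The second step is propagation: the kernel of $\mathbb{X}_+$ on $\Omega_m$ enjoys strong unique continuation, so a solution vanishing to infinite order at a point $x_0\in\Gamma$ must vanish on the connected manifold $M$. For the untwisted flat line bundle this is the content of \cite{DS}, and the same argument applies here because the curvature $f^{\mc{E}}$ of $\nabla^{\mc{E}}$ (and the Riemann curvature) enter the relevant identities only through lower order terms. Concretely, for $d\geq3$ the twisted conformal Killing equation is of finite type: prolonging $\mc{S}\nabla^{\mc{E}}f=g\odot\theta$ by repeated covariant differentiation expresses the higher jets of $f$ through $f$, $\theta$, the mixed--symmetry component of $\nabla^{\mc{E}}f$, and finitely many further tensors, and the process closes, producing a finite rank bundle $\mathcal{W}\to M$ with a natural linear connection $\mathcal{D}$ whose parallel sections are precisely the twisted conformal Killing tensors; a $\mathcal{D}$--parallel section vanishing at one point of the connected $M$ vanishes identically. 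For $d=2$ the operator $\mathbb{X}_+$ is a twist of a Cauchy--Riemann operator and unique continuation is the identity theorem. Combined with the first step, this gives $f\equiv0$, hence $u=0$.

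The main obstacle is the boundary determination step: one has to arrange the recursion so that, at each order, the normal jet $\partial_t^k f|_\Gamma$ is \emph{uniquely} forced to be zero, which amounts to checking invertibility of the finite dimensional linear maps coupling $\partial_t^k f|_\Gamma$ with $\partial_t^{k-1}\theta|_\Gamma$; this is a consequence of ellipticity but requires care with the trace terms in $g\odot\theta$ and with how $\theta$ depends on $f$. By comparison the prolongation and unique continuation in the second step are essentially as in the flat line bundle case \cite{DS}, the connection contributing only curvature (lower order) corrections.
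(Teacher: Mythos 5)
Your proof follows essentially the same two-step architecture as the paper's: (i) show that the connection only contributes lower-order terms so the boundary jet computation of \cite{DS} carries over and $f$ vanishes to infinite order along $\Gamma$, and (ii) invoke a prolongation/finite-type (jet determination) result to propagate this to $f\equiv 0$ on the connected manifold. The paper's treatment of step (i) is cleaner: rather than invoking ellipticity to solve a linear system for $\partial_t^k f|_\Gamma$, it observes that after applying $\partial_t^k|_{t=0}$ the term $\mc{S}(A\otimes u)$ drops out entirely by the induction hypothesis, so equation (4.2) of \cite[Lemma 4.1]{DS} is literally unchanged from the untwisted case, and the flat-case induction can be reused verbatim. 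For step (ii) the paper cites \v{C}ap's Theorem 3.6 (the principal symbol is unchanged by the connection, so the BGG prolongation still applies and solutions are determined by a finite jet), whereas you sketch constructing the prolongation bundle directly and, correctly, separate out $d=2$ where $\mathbb{X}_+$ is of Cauchy--Riemann type and finite type fails but unique continuation from a curve still holds; this is a worthwhile clarification since the prolongation argument as stated is overdetermined only for $d\geq 3$. Both proofs also reduce at the outset to the local/tensorial statement and use connectedness in the same way.
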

\begin{proof}
By a connectedness argument, the proof reduces to a local statement and thus it suffices to consider the case of a trivial bundle $SM \times \C^n$ 
with a connection $\nabla^{\mc{E}}=d+A$ for some connection $1$-form $A$ (with values in skew-Hermitian matrices). 
The operator $\mathbb{X}_+$ can then be written as $\mathbb{X}_+=X_++A_+$ where $X_+$ is the usual conformal Killing 
operator in the trivial bundle $SM \times \C^n$ acting diagonally, and $A_+$ is an endomorphism acting on $SM \times \C^n$ 
(an operator of order $0$). 
For  $A=0$ this theorem was proved in \cite{DS} and we shall use their approach for Step (2) below.
The proof splits in two:
\begin{enumerate}
\item First show that a solution to $\mathbb{X}_{+}u=0$ is determined by the $N$-jet of $u$ at a point, for a suitable $N$.
\item Show that if $u|_{\pi^{-1}\Gamma}=0$, then $u$ vanishes to infinite order at any point in $\pi^{-1}\Gamma$.
\end{enumerate}

These two steps correspond to Theorem 1.1 and 1.3 in \cite{DS} respectively. Both items will follow from results in the literature as we now explain. If we think of $u$ as a trace free symmetric $m$-tensor then $(X_{+}+A_+)u=0$
is equivalent to $\mc{P}(Du+\mc{S}(A\otimes u))=0$ where $D=\mc{S}\nabla$ is the usual conformal Killing operator 
(see Section \ref{Identifi}), $\mc{P}$ the projection on trace-free symmetric tensors and $\mc{S}$ denotes symmetrization as in Section \ref{Identifi}. 
Since $\mc{P}(D+\mc{S}(A\otimes \cdot))$ and $\mc{P}D$ have the same principal symbol, Theorem 3.6 in 
\cite{C} implies directly that any solution $u$ to $\mc{P}(Du+\mc{S}(A\otimes u))=0$ 
is determined by the $N$-jet of $u$ in one point, for some suitable $N$.

We are left with showing (2) and for this we can employ exactly the same proof as in \cite[Lemma 4.1]{DS}
which is the main lemma showing item (2) for the case $A=0$.
To this end, we note that once
equations (4.1) and (4.2) in \cite[Lemma 4.1]{DS} are established, the rest the proof runs undisturbed based on these two equations. The proof is by induction and equation (4.1) in \cite[Lemma 4.1]{DS} is the induction assumption which just claims that derivatives up to order $k$ vanish.

But we claim that $\mc{P}(Du+\mc{S}(A\otimes u))=0$ leads exactly to the {\it same} equation (4.2)  in \cite[Lemma 4.1]{DS} even when $A$ is not zero. To see this observe that  $\mc{P}(Du+\mc{S}(A\otimes u))=0$ is equivalent to
\[Du+\mc{S}(A\otimes u)=\mc{S}(g\otimes v) \]
where $v\in \Theta_{m-1}$.  In coordinates and using the notation from \cite{DS} the term $\mc{S}(A\otimes u)$ is given by
\[\frac{1}{m+1}(A_{i_{1}}u_{i_{2}\dots i_{m}}+A_{i_{2}}u_{i_{1}i_{3}\dots i_{m}}+\dots+A_{i_{m+1}}u_{i_{1}\dots i_{m}}).   \]
The coordinates $(x^{1},\dots,x^{n-1},y)$ are chosen so that $y=0$ defines $\Gamma$ and 
they are normal geodesic coordinates (i.e. $g_{in}=\delta_{in})$. 
But, once we apply the operator $\frac{\partial^{k}}{\partial y^{k}}|_{y=0}$ to this expression it vanishes so the equation that we obtain is exactly the same as equation (4.2) in \cite[Lemma 4.1]{DS} and we are done.
\end{proof}

\section{Regularity for solutions of the transport equation} \label{sec_regularity}

\vspace{10pt}

\noindent \ref{sec_regularity}.1. {\bf Geometric setup and geodesic flow.} \label{Geosetup}
We consider a smooth compact Riemannian manifold $(M,g)$ with strictly convex boundary $\pl M$ 
and we assume that the sectional curvatures of $g$ are negative. We let $X$ be the geodesic vector field of $g$ on $SM$. For convenience of notations and technical purpose, we will extend the vector field $X$ to a larger manifold with boundary in a way that it has complete flow and for that purpose we follow very closely the method explained in \cite[Sec. 2.1]{G14b}. 
We can extend $M$ to a smooth compact manifold $\hat{M}$ with boundary by adding a very small collar to $M$ and extend $g$ so that $\hat{M}\setminus M$ has a foliation by strictly convex hypersurfaces, the boundary $\pl \hat{M}$ is strictly convex, 
and $g$ has negative curvature. The geodesic vector field $\hat{X}$ for $g$ on $S\hat{M}$ coincides with that of 
$g$ when restricted on $SM$.  Each trajectory leaving $SM$ never comes back to $SM$ and hits $\pl S\hat{M}$ in finite time. We multiply $\hat{X}$ by a non-negative function $\rho_0 \in 
C^\infty(\hat{M})$ which is a function of the geodesic distance to $\pl M$ in $\hat{M}\setminus M$, 
vanishing only at $\pl\hat{M}$, at first order, and equal to $1$ in a neighborhood of $M$. If  
$\pi: S\hat{M}\to \hat{M}$ is the natural projection, the flow of $\pi^*(\rho_0) \hat{X}$ is complete on $S\hat{M}$, and the intersection of a flow line 
for $\pi^*(\rho_0) \hat{X}$ with $SM$ is exactly the flow line of $X$ in $SM$. By abuse of notation, we denote
the extension $\pi^*(\rho_0) \hat{X}$ of $X$ to $S\hat{M}$ by $X$, 
this allows us to consider $X$ as a vector field with a complete flow 
$\varphi_t: S\hat{M}\to S\hat{M}$. We also take an intermediate manifold $M_e\subset \hat{M}$ containing $M$ with the same properties as $M$, with $\rho_0=1$ on $M_e$. By our choice of $M_e$, the largest time that a flow trajectory spends in $SM_e\setminus SM$ is finite and denoted 
\begin{equation}\label{defL}
L :=\sup \{ t\geq 0;  \exists y\in SM_e\setminus SM, \forall s\in [0,t], \varphi_s(y)\in SM_e\setminus SM \} <\infty.
\end{equation}

We now describe properties of the geodesic flow in negative curvature; 
we refer to Section 2 of \cite{DG14} and to Sections 2.2, 2.3 in \cite{G14b} for more details. 
For each point $(x,v)\in SM$, we define the time of escape from $SM$ along
the forward ($+$) and backward ($-$) trajectories: \begin{equation}\label{escapetime} 
\begin{gathered}
\ell_+ (x,v)=\sup\, \{ t\geq 0; \varphi_{t}(x,v)\in SM\}\subset [0,+\infty],\\
\ell_- (x,v)=\inf\, \{ t\leq 0; \varphi_{t}(x,v)\in SM\}\subset [-\infty,0].
\end{gathered}
\end{equation}
Then we define the incoming ($-$) and outgoing ($+$) tails in $SM$ by 
\[
\Gamma_\mp =\{(x,v)\in SM;  \ell_\pm(x,v)=\pm \infty\}=\bigcap_{t\geq 0}\varphi_{\mp t}(SM)
\]
and the trapped set for the flow on $SM$ is the closed (flow-invariant) subset of $SM^\circ$ 
\begin{equation}\label{defofK} 
K:=\Gamma_+\cap \Gamma_-=\bigcap_{t\in \mathbb{R}}\varphi_t(SM).
\end{equation}
Here $\Gamma_-$ is the stable manifold of $K$ and $\Gamma_+$ is the unstable manifold of $K$ for the flow.
Since the curvature is negative, the set $K$ is a hyperbolic set in the sense of dynamical systems, i.e. it has a decomposition of the form 
\[
T_p(SM) = E_0(p) \oplus E_s(p) \oplus E_u(p), \quad \forall p\in K
\] 
which is continuous in $p$ and invariant by the flow, where $E_0=\mathbb{R} X$ and $E_s$ and  $E_u$ are stable and unstable bundles 
as in \eqref{stable/unstable}. The bundle $E_s$ extends continuously to a bundle called $E_-$ over $\Gamma_-$ 
and $E_u$ to a bundle called $E_+$ over $\Gamma_+$ (the fibers are simply the tangent spaces to each stable/unstable leaf); the differential of the forward flow is uniformly contracting on $E_-$ and uniformly expanding on $E_+$. As in 
\cite[Lemma 2.10]{DG14}, there are dual subbundles $E_\pm^*\subset T_{\Gamma_\pm}^*(SM)$ over $\Gamma_\pm$ satisfying 
\[E_+^*(E_+\oplus E_0)=0, \quad E_-^*(E_-\oplus E_0)=0. \]
Finally, by Proposition 2.4 in \cite{G14b}, if $g$ is negatively curved there exists $Q<0$ so that
\begin{equation}\label{Vt}
\begin{gathered}
V(t)=\mc{O}(e^{Q|t|}) \textrm{ where } \\
V(t):={\rm Vol}(\{ y\in SM;\, \varphi_s(y)\in SM \textrm{ for } |s|\in[0,|t|], st>0\}).
\end{gathered}\end{equation}
The volume is with respect to the Liouville measure $d\mu$. The boundary $\pl(SM)$ has a natural measure 
$d\mu_\nu$ which in local coordinates $(x,v)$ with $x\in \pl M$ and $v\in S^{d-1}$ 
is given by $d\mu_\nu=|\cjg v,\nu\cjd_g|d{\rm vol}_{\pl M}dv_{S^{n-1}}$, and we shall always use this measure when we integrate on $\pl(SM)$.
In particular we have that ${\rm Vol}_{SM}(\Gamma_+\cup \Gamma_-)=0$ and 
thus also ${\rm Vol}_{\pl(SM)}(\Gamma_\pm\cap \pl(SM))=0$ using that $X$ is transverse to $\pl M$ near $\Gamma_\pm$ (see \cite[Section 2.4]{G14b} for details).

\vspace{12pt}

\noindent \ref{sec_regularity}.2. {\bf The operator generating attenuated transport and its resolvent.}
Consider a Hermitian vector bundle $\mc{E}$ on $SM$ (with a Hermitian product $\cjg\,\cdot\,,\,\cdot\,\cjd_{\mc{E}}$), 
and let $\nabla$ be a Hermitian connection, i.e.\ 
\begin{equation} 
V\cjg w,w'\cjd_{\mc{E}}=\cjg \nabla_Vw,w'\cjd_\mc{E}+\cjg w,\nabla_Vw'\cjd_\mc{E}
\end{equation}
for any smooth sections $w,w'$ of $\mc{E}$ and any vector field $V$ on $SM$. Now, 
let us take $\Phi \in C^{\infty}(SM; {\rm End}_{\rm sk}(\mc{E}))$ a skew-Hermitian potential and we extend $\mc{E}$, 
$\nabla$ and $\Phi$  to 
$S\hat{M}$ in a smooth fashion. Note that later $\mc{E},\nabla,\Phi$ will be taken to be pull-back of bundles, connections and Higgs fields on the base manifold $M$, in order to use Pestov identities, but in this section this is not needed.
Let $\mathbb{X}=\nabla_X$ be the first order differential operator acting on sections of $\mc{E}$ over $S\hat{M}$
already introduced in (\ref{defbfX}). It satisfies:
for all $f\in C^\infty(S\hat{M};\mc{E})$, 
$\psi\in C^\infty(S\hat{M})$ and $f'\in C_c^\infty(SM^\circ;\mc{E})$,
\begin{equation}\label{bfXu} 
\mathbb{X} (\psi f)=(X\psi)f+\psi(\mathbb{X}f), \quad  \cjg \mathbb{X}f,f'\cjd_{L^2(S\hat{M};\mc{E})}=- \cjg f,\mathbb{X}f'\cjd_{L^2(S\hat{M};\mc{E})}.
\end{equation} 
where the $L^2$ space is defined with respect to the Liouville measure on $SM$. Let us define
\[ P:=-\mathbb{X}-\Phi \]
acting on smooth sections of $\mc{E}$, which is formally skew-adjoint when restricted to the space 
$C_c^\infty(SM^\circ, \mc{E})$. 
Its propagator $U(t):=e^{-tP}$ is the operator which solves the equation $\pl_tU(t)f=-PU(t)f$ for all $f\in C_c^\infty(S\hat{M}^\circ, \mc{E})$ with $U(0)={\rm Id}$. Here $U(t)$ is well defined as the solution of a non-characteristic first order ODE. If $\mc{E}\simeq \mathbb{C}$ is trivial with the trivial connection and $\Phi=0$, then $U(t)f=f\circ \varphi_t$.
Note that the first property of \eqref{bfXu} on $S\hat{M}$ implies 
\begin{equation}
  \label{support}
U(t)(\psi f)=(\psi\circ\varphi_t)U(t)f,\quad
\forall \psi\in C_c^\infty(S\hat{M}),\ \forall f\in C_c^\infty(S\hat{M};\mc{E}).
\end{equation}
In particular, for $f\in C_c^\infty(SM^\circ_e;\mc{E})$, $U(t)f$ has support intersecting $SM_e\setminus SM$ if and only 
if $\varphi_{-t}({\rm supp}(f))\cap (SM_e\setminus SM)\not=\emptyset$.
For all $f\in L^2(SM)$ such that ${\rm supp}(U(t)f)\subset SM$, one has (using density of $C_c^\infty(SM^\circ)$ in $L^2(SM)$)
\begin{equation}\label{unitary} 
||U(t)f||_{L^2}=||f||_{L^2},
\end{equation}
this follows directly from the fact that $\mathbb{X}$ is formally skew-adjoint in $SM$ and 
that $\Phi$ is skew-Hermitian over $SM$. 
Define 
\[\mc{T}_{\pm}(t):=\{ y\in SM;\, \varphi_{\pm s}(y)\in SM \textrm{ for } s\in[0,t]\}.\]
Then by definition of the constant $L$ in \eqref{defL} and by \eqref{unitary}, we can write for $f\in L^2(SM_e;\mc{E})$ and $t>L$
\begin{equation}\label{estimU(t)u} 
||U(t)f||_{L^2(SM_e)}=||U(L)U(t-L)(f.1_{\mc{T}_+(t-L)})||_{L^2(SM_e)}\leq C_L||f||_{L^2(SM_e)}.
\end{equation}
with $C_L=||U(L)||_{L^2(SM_e)\to L^2(SM_e)}$. 
We obtain
\begin{Lemma}
For ${\rm Re}(\la) > 0$, the resolvents $R_\pm (\la):=(P\pm \la)^{-1}$ are bounded 
as maps on $L^2(SM_e;\mathcal E)$ 
and given in terms of the propagator by the formula
\begin{equation}
 \label{formulares}
R_\pm(\la)f=\int_0^{\pm \infty} e^{\mp \la t}U(t)f \,dt.
\end{equation}
They satisfy $(P\pm \la)R_\pm(\la)f=f$ in the distribution sense in $SM_e$, and  if $f\in C^0(SM;\mc{E})$, then  
$R_\pm(\la)f$ is continuous near $\pl_\pm(SM)$ and  
$(R_\pm(\la)f)|_{\pl_\pm(SM)}=0$. 
\end{Lemma}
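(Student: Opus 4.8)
The plan is to view $R_\pm(\la)$ as the Laplace transform in $t$ of the propagator $U(t)=e^{-tP}$, so that \eqref{formulares} becomes the \emph{definition} of $R_\pm(\la)$; boundedness on $L^2$ then follows from a uniform-in-$t$ bound on $U(t)$, the resolvent identity from integration by parts in $t$, and the vanishing on $\pl_\pm(SM)$ from a support argument together with the fact that a trajectory issued from a point of $\pl_\pm(SM)$ never returns to $SM$. The only genuine difficulty is the uniform bound on $U(t)$: the $L^2$ mass of $U(t)f$ is conserved only while the support stays in $SM$, and it could a priori grow both during the finite excursions into $SM_e\setminus SM$ and, more seriously, because trajectories may linger near the trapped set $K$ for arbitrarily long times.

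First I would show that $M_0:=\sup_{t\in\mR}\|U(t)\|_{L^2(SM_e;\mc{E})\to L^2(SM_e;\mc{E})}<\infty$. For $|t|\le L$ this comes from an energy estimate on $S\hat{M}$: writing $g(t)=U(t)f$ one has $\tfrac{d}{dt}\|g(t)\|_{L^2(S\hat{M})}^2=-2\,{\rm Re}\,\cjg Pg(t),g(t)\cjd_{L^2(S\hat{M})}=2\,{\rm Re}\,\cjg \mathbb{X}g(t),g(t)\cjd_{L^2(S\hat{M})}$ because $\Phi$ is skew-Hermitian; using \eqref{Hermitian} lifted to $S\hat{M}$, the identity $X=\rho_0\hat{X}$ and the vanishing of $\rho_0$ on $\pl\hat{M}$, the boundary term drops and $2\,{\rm Re}\,\cjg\mathbb{X}g,g\cjd=-\int_{S\hat{M}}(\hat{X}\rho_0)|g|_{\mc{E}}^2\,d\mu$, so $\big|\tfrac{d}{dt}\|g(t)\|^2\big|\le C\|g(t)\|^2$ and Gr\"onwall gives $\|U(t)f\|_{L^2(S\hat{M})}\le e^{C|t|/2}\|f\|_{L^2(S\hat{M})}$; restricting to $SM_e$ bounds $U(t)$ for $|t|\le L$. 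For $|t|>L$ one invokes exactly the estimate \eqref{estimU(t)u} and its time-reversed analogue, which is precisely where the finiteness of the excursion time $L$ and the negative-curvature volume decay \eqref{Vt} do the work. Combining the two ranges gives $M_0<\infty$.

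Given $M_0<\infty$, for ${\rm Re}(\la)>0$ the integral $R_\pm(\la)f:=\int_0^{\pm\infty}e^{\mp\la t}U(t)f\,dt$ converges absolutely in $L^2(SM_e;\mc{E})$ with $\|R_\pm(\la)f\|_{L^2}\le M_0\,({\rm Re}\,\la)^{-1}\|f\|_{L^2}$, so $R_\pm(\la)$ is bounded on $L^2(SM_e;\mc{E})$ and \eqref{formulares} holds. To obtain $(P\pm\la)R_\pm(\la)f=f$, I would take first $f\in C_c^\infty(SM_e^\circ;\mc{E})$, use $\pl_t U(t)f=-PU(t)f$, and integrate by parts in $t$; the boundary contributions at $t=\pm\infty$ vanish since $\|U(t)f\|_{L^2}$ is bounded while $e^{\mp\la t}\to 0$. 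For general $f\in L^2(SM_e;\mc{E})$ one passes to the limit using boundedness of $R_\pm(\la)$ and continuity of the first order operator $P$ from $L^2$ to $\mathcal{D}'$. Together with a uniqueness statement of the type \cite[Lemma 3.3]{G14b} (that $P\pm\la$ is injective on $L^2(SM_e;\mc{E})$), this justifies writing $R_\pm(\la)=(P\pm\la)^{-1}$.

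Finally, for the boundary behaviour take $f\in C^0(SM;\mc{E})$ extended by zero, so ${\rm supp}(f)\subset SM$. By \eqref{support}, $U(t)f(x,v)\ne 0$ forces $\varphi_t(x,v)\in{\rm supp}(f)\subset SM$; but by strict convexity of $\pl M$ and the construction of $\hat{M}$, a trajectory issued from a point of $\pl_+(SM)$ never meets $SM$ in positive time, so $U(t)f$ vanishes on $\pl_+(SM)$ for every $t>0$ and hence $R_+(\la)f=0$ on $\pl_+(SM)$. For continuity up to $\pl_+(SM)$ I would use that on a one-sided neighbourhood of $\pl_+(SM)$ in $SM$, which is disjoint from $\Gamma_-$, one has $R_+(\la)f(x,v)=\int_0^{\ell_+(x,v)}e^{-\la t}U(t)f(x,v)\,dt$ with integrand bounded by $C\|f\|_{L^\infty}$, while $(x,v)\mapsto \ell_+(x,v)$ is continuous there and tends to $0$ at $\pl_+(SM)$ by strict convexity; this parallels the analysis in \cite{G14b}. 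The statement for $R_-(\la)$ on $\pl_-(SM)$ follows by reversing time. (Here ``continuous near $\pl_\pm(SM)$'' should be read as continuity of the restriction of $R_\pm(\la)f$ to such a one-sided neighbourhood.)
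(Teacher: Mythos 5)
Your proof is correct and follows essentially the same route as the paper's (which is very terse, essentially citing \eqref{estimU(t)u}, then declaring the remaining properties ``straightforward'' and pointing to \cite[Lemma 4.1]{G14b}). You fill in the $|t|\le L$ range with a Gr\"onwall estimate on $S\hat M$, derive the resolvent identity by integration by parts in $t$ and a density argument, and establish the boundary vanishing via a support argument using $\ell_\pm|_{\pl_\pm(SM)}=0$ --- all consonant with what the paper sketches.

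Two small inaccuracies worth noting, neither of which breaks the argument. First, the energy computation on $S\hat M$ cannot assume $\Phi$ is skew-Hermitian there: the paper extends $\Phi$ to $S\hat M$ ``in a smooth fashion'' with no symmetry constraint, and $\Phi$ is only assumed skew-Hermitian over $SM$ (this is exactly why \eqref{unitary} is only asserted when ${\rm supp}(U(t)f)\subset SM$). But since $\Phi$ and $\hat X\rho_0$ are bounded on the compact $S\hat M$, you still get $|\tfrac{d}{dt}\|g(t)\|^2|\le C\|g(t)\|^2$ and Gr\"onwall goes through --- just keep the $\Phi$-term rather than dropping it. Second, the volume-decay estimate \eqref{Vt} plays no role in this lemma; \eqref{estimU(t)u} rests on \eqref{unitary}, the finiteness of $L$, and boundedness of $U(L)$, while \eqref{Vt} is only needed later to push the resolvent down to ${\rm Re}(\la)=0$ in Proposition \ref{Rpm0}. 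With these caveats, the proposal is a faithful, more detailed version of the paper's proof.
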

\begin{proof}
Let ${\rm Re}(\la)>0$. Then by Cauchy-Schwartz and \eqref{estimU(t)u}
\[ \int_0^{\mp \infty} e^{\pm {\rm Re}(\la) t}||U(t)f||_{L^2(SM_e)}dt\leq C_{\la,L} ||f||_{L^2(SM_e)}\]
for some $C_{\la,L}>0$ depending on ${\rm Re}(\la)$ and $L$, thus $R_\pm(\la)$ is bounded on $L^2$.  The other properties are straightforward: the continuity of 
$R_\pm(\la)f$ near $\pl_\pm (SM)$ is just ODE regularity and $(R_\pm(\la)f)|_{\pl_\pm(SM)}=0$
follows from $\ell_\pm|_{\pl_\pm(SM)}=0$. For more details, see 
\cite[Lemma 4.1]{G14b} where it is done for $\mc{E}=\mathbb{C}$ and $P=-X$.
\end{proof}

We want to define a right inverse for $P$ and thus we let $\la\to 0$ to define $R_\pm(0)$. The problem is that 
this is not bounded on $L^2(SM_e;\mc{E})$, but arguing like in Propositions 4.2-4.4 of \cite{G14b}, we can prove, 
using the properties on the trapped set, that these operators make sense when acting on $L^p$ spaces, and Sobolev spaces of positive order. We refer to \cite{Ho} for definitions and properties of wavefront set of distributions (which is denoted by ${\rm WF}$ below).
\begin{Proposition}\label{Rpm0}
The resolvent $R_\pm(\la)$ extends continuously to ${\rm Re}(\la)\geq 0$ as a family of bounded operators
for $s\in (0,1/2)$ and any $p<\infty$
\[R_\pm(\la): H_0^s(SM_e;\mc{E})\to H^{-s}(SM_e;\mc{E}) , \quad 
R_\pm(\la): L^\infty(SM_e;\mc{E})\to L^p(SM_e;\mc{E}) \]
that satisfies  $(P\pm \la)R_\pm(\la)f=f$ in the distribution sense in $SM_e$, and for $f\in C^0(SM_e;\mc{E})$ the expression 
\eqref{formulares} holds true also in ${\rm Re}(\la)\geq 0$ as an element in $L^p(SM_e;\mc{E})$. 
If $f\in C^\infty(SM; \mc{E})$ is extended by $0$ outside $SM$, 
the section $u_\pm:=R_\pm(0)f$ is smooth in $SM\setminus \Gamma_\mp$ and its 
wavefront set over $SM^\circ$ is
\begin{equation}\label{WFsetupm}
{\rm WF}(u_\pm)\cap T^*SM^\circ\subset E_\mp^*,
\end{equation}
the restriction $u_\pm|_{\pl(SM)}$ makes sense as a distribution satisfying 
\begin{equation}\label{WFsetupmpl}
\forall p<\infty,\,\,  u_\pm|_{\pl(SM)}\in L^p(\pl(SM)),\quad 
u_\pm |_{\pl_\pm(SM)}=0. 
\end{equation}
Finally, the restriction of $u_\pm$ to $SM$ is the only  $L^1(SM;\mc{E})\cap C^\infty(SM\setminus \Gamma_\mp;\mc{E})$  section
which satisfies $Pu_\pm=f$ in $SM$ in the distribution sense and $u_\pm|_{\pl_\pm(SM)}=0$.
\end{Proposition}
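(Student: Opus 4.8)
The plan is to follow the scheme of \cite[Propositions 4.2--4.4]{G14b}, which establishes the analogous statements in the scalar case $\mc{E}=\mathbb{C}$, $P=-X$, and to verify that including a unitary connection $\nabla$ and a skew-Hermitian Higgs field $\Phi$ changes nothing essential. Two structural observations make this work. First, since $\nabla$ is unitary and $\Phi$ is skew-Hermitian, the fibrewise propagator of $P=-\mathbb{X}-\Phi$ is a linear isometry of the fibres of $\mc{E}$, so $\abs{(U(t)f)(y)}_{\mc{E}}=\abs{f(\varphi_t(y))}_{\mc{E}}$ wherever the (reparametrised, complete) flow on $S\hat M$ is defined; this yields \eqref{unitary} and \eqref{estimU(t)u}, and for $f$ supported in $SM$ the explicit formula $R_\pm(\la)f(y)=\int_0^{\ell_\pm(y)}e^{\mp\la t}(U(t)f)(y)\,dt$. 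Second, $P$ has the same principal symbol as $-X$, hence the same characteristic variety $\Sigma:=\{(y,\xi)\in T^*(SM)\,;\,\xi(X(y))=0\}$, the same Hamilton flow, and the same hyperbolic trapped dynamics recalled in Section \ref{Geosetup}; the lower order terms coming from $\Phi$ and from the connection part of $\mathbb{X}$ affect none of the symbol-level data, so all microlocal estimates near $K$, $\Gamma_\pm$ and $E_\pm^*$ used in \cite{DG14,G14b} transfer verbatim once scalar function spaces are replaced by their $\mc{E}$-valued analogues. With these in hand every step below is either a transcription of an argument in \cite{G14b} or a short elementary computation.

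\emph{Mapping properties and the limit $\la\to 0$.} For the $L^\infty\to L^p$ part, the formula above gives $\abs{R_\pm(\la)f(y)}_{\mc{E}}\le\norm{f}_{L^\infty}\,\abs{\ell_\pm(y)}$ for all $\la$ with ${\rm Re}(\la)\ge 0$, and by \eqref{Vt} the escape time $\ell_\pm$ lies in $L^p(SM_e)$ for every $p<\infty$ (its super-level sets have exponentially small Liouville volume); hence $\norm{R_\pm(\la)f}_{L^p(SM_e)}\le C_p\norm{f}_{L^\infty}$ uniformly, and continuity of $\la\mapsto R_\pm(\la)f$ up to ${\rm Re}(\la)=0$ follows from dominated convergence in \eqref{formulares}. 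The $H_0^s\to H^{-s}$ part for $s\in(0,1/2)$, its continuity up to ${\rm Re}(\la)=0$, and the absence of a pole at $\la=0$ are exactly \cite[Propositions 4.2--4.3]{G14b}, now read in the bundle setting by the second observation. Passing to the limit in $(P\pm\la)R_\pm(\la)f=f$ (valid for ${\rm Re}(\la)>0$ by the Lemma preceding this Proposition) and using that $P:H^{-s}(SM_e;\mc{E})\to\mDp(SM_e;\mc{E})$ is continuous gives $(P\pm\la)R_\pm(\la)f=f$ on all of ${\rm Re}(\la)\ge 0$.

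\emph{Regularity, wavefront set and boundary trace of $u_\pm=R_\pm(0)f$.} Let $f\in C^\infty(SM;\mc{E})$, extended by $0$. On $SM\setminus\Gamma_\mp=\{\abs{\ell_\pm}<\infty\}$, strict convexity of $\pl M$ makes the exit through $\pl M$ transversal, so $\ell_\pm$ is smooth there, and $u_\pm(y)=\int_0^{\ell_\pm(y)}(U(t)f)(y)\,dt$ exhibits $u_\pm$ as a smooth section of $\mc{E}$ over $SM\setminus\Gamma_\mp$. Elliptic regularity for $P$ gives ${\rm WF}(u_\pm)\cap T^*SM^\circ\subset\Sigma$; combining smoothness off $\Gamma_\mp$, the equation $Pu_\pm=f$ with $f\in C^\infty(SM^\circ)$, and the propagation and radial-point estimates at the hyperbolic set $K$ from \cite[Section 2]{DG14} and \cite[Proposition 4.4]{G14b} (which only involve the geodesic flow) forces ${\rm WF}(u_\pm)\cap T^*SM^\circ\subset E_\mp^*$. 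For the trace: near $\pl(SM)\setminus\Gamma_\mp$ the section $u_\pm$ is smooth up to the boundary; near $\Gamma_\mp\cap\pl(SM)$ the field $X$ is transverse to $\pl M$ (Section \ref{Geosetup}), so $u_\pm|_{\pl(SM)}$ is a well-defined distribution obtained by integrating $Pu_\pm=f$ up to the boundary, and $\abs{u_\pm}_{\mc{E}}\le\norm{f}_{L^\infty}\abs{\ell_\pm}$ together with $\ell_\pm|_{\pl(SM)}\in L^p(\pl(SM))$ (again \eqref{Vt}) gives $u_\pm|_{\pl(SM)}\in L^p$; since $\ell_\pm|_{\pl_\pm(SM)}=0$ we obtain $u_\pm|_{\pl_\pm(SM)}=0$.

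\emph{Uniqueness.} If $w\in L^1(SM;\mc{E})\cap C^\infty(SM\setminus\Gamma_\mp;\mc{E})$ satisfies $Pw=0$ in $SM$ in the distribution sense and $w|_{\pl_\pm(SM)}=0$, then on the flow-invariant open set $SM\setminus\Gamma_\mp$ the equation holds classically, so $w$ restricted to any flow line solves $\mathbb{X}w+\Phi w=0$; since $\nabla$ is unitary and $\Phi$ skew-Hermitian, $X\abs{w}_{\mc{E}}^2=2{\rm Re}\langle\nabla_X w,w\rangle_{\mc{E}}=-2{\rm Re}\langle\Phi w,w\rangle_{\mc{E}}=0$, so $\abs{w}_{\mc{E}}$ is constant along flow lines. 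Each flow line in $SM\setminus\Gamma_\mp$ reaches $\pl_\pm(SM)$ in finite time and transversally, where the trace of $w$ vanishes; hence $w\equiv 0$ on $SM\setminus\Gamma_\mp$, and since $\Gamma_\mp$ has zero Liouville measure, $w=0$ in $L^1(SM;\mc{E})$. Applying this to the difference of two solutions gives the uniqueness statement. The only genuinely external inputs are the $H_0^s\to H^{-s}$ bound and the wavefront inclusion ${\rm WF}(u_\pm)\subset E_\mp^*$, which rest on the microlocal analysis near the hyperbolic trapped set in \cite{DG14,G14b}; the whole task of adapting the proof here is the routine check, summarised in the two observations above, that the perturbation by $\nabla$ and $\Phi$ preserves the relevant dynamics, symbols and $L^2$ structure, and this is the only point where care is required.
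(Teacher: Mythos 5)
Your proposal is correct and follows essentially the same route as the paper: reduce to the scalar results of \cite{G14b} and the bundle-valued microlocal results of \cite{DG14}, noting that the unitarity of $\nabla$ and skew-Hermiticity of $\Phi$ make the propagator an isometry (giving the $L^\infty\to L^p$ bounds via $\ell_\pm\in L^p$ from \eqref{Vt}, the boundary trace, and the flow-line uniqueness), and that $P$ shares the principal symbol and dynamics of $-X$ so the Sobolev and wavefront estimates near $K$ transfer. The one small divergence is the wavefront inclusion ${\rm WF}(u_\pm)\subset E_\mp^*$: you argue directly via propagation and radial-point estimates, while the paper derives it by combining the wavefront analysis of the Schwartz kernel of $R_\pm(0)$ from \cite[Lemma 4.5]{DG14} with the composition theorem \cite[Th.\ 8.2.13]{Ho}; these are morally the same underlying estimates, packaged differently. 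A minor attribution note: the bundle-level meromorphic extension and the $H^s_0\to H^{-s}$ bound are really supplied by \cite[Lemmas 4.2--4.4]{DG14} (which explicitly allow bundle coefficients and skew-adjoint lower-order terms), rather than by the scalar Propositions 4.2--4.3 of \cite{G14b} that you cite; the paper cites DG14 for exactly this reason.
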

\begin{proof}
For any $\delta>0$, the resolvents $R_\pm(\la)$ admits a meromorphic extension in ${\rm Re}(\la)>-\delta$ as 
bounded operators $R_\pm(\la): H_0^s(SM_e; \mc{E})\to H^{-s}(SM_e;\mc{E})$ for $0<s<C\delta$ for some $C>0$ depending  on the Lyapunov exponents. The meromorphic extension and boundedness is proved in Lemmas 4.2-4.4 of \cite{DG14} (see also the remark after Lemma 4.2 in \cite{DG14} for the sharp Sobolev exponent in the case where $P$ is formally skew-adjoint near the trapped set $K$). The fact that $R_\pm (\la)$ is continuous in ${\rm Re}(\la)\geq 0$ follows essentially from the proof of
Proposition 4.2 in \cite{G14b}: using \eqref{support} we have pointwise estimates for $u_+(\la;y):=(R_+(\la)f)(y)$
\[ ||u_+(\la;y)||_{\mc{E}} \leq \int_{0}^\infty \chi(\varphi_t(y))||U(t)f(y)||_{\mc{E}}dt\leq 
C ||f||_{L^\infty} \int_{0}^\infty \chi(\varphi_t(y)) dt\]
if $\chi\in C_c^\infty(S\hat{M}^\circ)$ is non-negative, equal to $1$ on $SM_e$ and supported in a very small neighborhood of $SM_e$ (here $C>0$ depends only on $L$). Then the proof of \cite[Prop. 4.2]{G14b} can be applied verbatim and using \eqref{Vt}, we obtain that for each $p<\infty$ there is $C>0$ such that for all ${\rm Re}(\la)\geq 0$
\[ ||u_+(\la)||_{L^p(SM_e;\mc{E})}\leq C||f||_{L^\infty(SM_e;\mc{E})}.\]
The wave-front set properties \eqref{WFsetupm} and \eqref{WFsetupmpl} are obtained exactly as in (the proof of)\cite[Prop 5.5]{G14b}: they are direct consequences of the analysis in \cite[Lemma 4.5]{DG14} of the wavefront set of the Schwartz kernel of the resolvent $R_\pm(0)$ and the composition of wavefront sets given by \cite[Th. 8.2.13]{Ho}. The vanishing of $u_\pm$ on $\pl_\pm(SM)$
is easy and holds the same way as for ${\rm Re}(\la)>0$. The fact that $u_\pm|_{SM}$ is the only $L^1$ solution of $Pu_\pm=f$ in $SM$ vanishing at $\pl_\mp SM$ and smooth outside $\Gamma_\mp$ is also clear: the difference of two such solutions would be an $L^1$ section in $\ker P$ that is smooth in $SM\setminus \Gamma_\mp$ and such sections are uniquely determined in $SM\setminus \Gamma_\mp$ from parallel transports of elements of $\mc{E}|_{\pl_\mp SM\setminus \Gamma_\mp}$ along flow trajectories of $X$, and are thus determined in a set of full Liouville measure 
by their value at $\pl_\mp SM\setminus \Gamma_\mp$, using that ${\rm Vol}(\Gamma_+\cup \Gamma_-)=0$.
\end{proof}
In fact, the exponential decay \eqref{Vt} implies that 
\[\exists \alpha>0,\,\, f\in C^\alpha(SM_e;\mc{E}) \Longrightarrow \exists s>0,\,\, R_\pm (0)f\in H^s(SM_e;\mc{E}), \]
see \cite[Prop. 4.2]{G14b} for the argument.

Before stating the next corollary we recall that $I_{\nabla,\Phi}$ denotes the attenuated ray transform defined in (\ref{eq:defat}).

\begin{Corollary} \label{smoothness}
Assume that $I_{\nabla,\Phi}f=0$ with $f\in C^\infty(SM;\mc{E})$, then there exists a unique $u\in C^\infty(SM;\mc{E})$ such that
$(\mathbb{X}+\Phi)u=-f$ and  $u|_{\pl(SM)}=0$.
\end{Corollary}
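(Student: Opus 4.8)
The plan is to identify $u$ with the resolvent solution $u_+:=R_+(0)f$ furnished by Proposition \ref{Rpm0} (applied with $P=-\mathbb{X}-\Phi$ and with $f$ extended by $0$ outside $SM$), and then to upgrade the a priori regularity of $u_+$ — which Proposition \ref{Rpm0} gives only on $SM\setminus\Gamma_-$ — to smoothness on all of $SM$, using the hypothesis $I_{\nabla,\Phi}f=0$. First I would collect what is already available: by Proposition \ref{Rpm0}, $u_+\in L^1(SM;\mc{E})\cap C^\infty(SM\setminus\Gamma_-;\mc{E})$ solves $(\mathbb{X}+\Phi)u_+=-f$ with $u_+|_{\partial_+(SM)}=0$, it satisfies ${\rm WF}(u_+)\cap T^*SM^\circ\subset E_-^*$, its trace $u_+|_{\partial(SM)}$ lies in $L^p(\partial(SM))$ for all $p<\infty$, and it is the \emph{only} section in $L^1\cap C^\infty(SM\setminus\Gamma_-)$ with $(\mathbb{X}+\Phi)u_+=-f$ and $u_+|_{\partial_+(SM)}=0$. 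By the definition \eqref{eq:defat} of the attenuated ray transform, $u_+|_{\partial_-(SM)\setminus\Gamma_-}=I_{\nabla,\Phi}f=0$; since $\Gamma_\pm\cap\partial(SM)$ has zero measure for $d\mu_\nu$, this together with $u_+|_{\partial_+(SM)}=0$ gives $u_+|_{\partial(SM)}=0$. Uniqueness of a \emph{smooth} solution is then immediate from the uniqueness clause just quoted (any smooth $u$ with $(\mathbb{X}+\Phi)u=-f$ and $u|_{\partial(SM)}=0$ is an admissible competitor, hence equals $u_+$), so the whole content is to show that $u_+$ is smooth.

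The key step is to show that $u_+$ coincides with the backward resolvent solution $u_-:=R_-(0)f$, which by Proposition \ref{Rpm0} solves $(\mathbb{X}+\Phi)u_-=-f$, vanishes on $\partial_-(SM)$, is smooth on $SM\setminus\Gamma_+$, and has ${\rm WF}(u_-)\cap T^*SM^\circ\subset E_+^*$. Put $h:=u_+-u_-\in L^1(SM;\mc{E})$, so $(\mathbb{X}+\Phi)h=0$, $h$ is smooth on the open, flow-invariant, full-measure set $\mc{U}:=SM\setminus(\Gamma_+\cup\Gamma_-)$, and $h=0$ on $\partial_-(SM)\setminus\Gamma_-\subset\mc{U}$ (entering covectors are not backward-trapped, so $\partial_-(SM)\cap\Gamma_+=\emptyset$), because both $u_+$ and $u_-$ vanish there. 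For any $y\in\mc{U}$ one has $\ell_-(y)>-\infty$ and the backward orbit segment $\{\varphi_s(y):\ell_-(y)\le s\le 0\}$ stays in $\mc{U}$ and ends at $\varphi_{\ell_-(y)}(y)\in\partial_-(SM)\setminus\Gamma_-$; restricting $(\mathbb{X}+\Phi)h=0$ to this orbit yields a linear ODE for $s\mapsto h(\varphi_s(y))$ with vanishing value at $s=\ell_-(y)$, hence $h(y)=0$. Therefore $h=0$ a.e., so $u:=u_+=u_-$ in $L^1$, and consequently $u$ is smooth, up to the boundary, on $(SM\setminus\Gamma_-)\cup(SM\setminus\Gamma_+)=SM\setminus K$, where $K=\Gamma_+\cap\Gamma_-$ is the trapped set.

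It remains to prove smoothness across $K$. Since $u$ equals $u_+$ and also $u_-$, Proposition \ref{Rpm0} gives ${\rm WF}(u)\cap T^*SM^\circ\subset E_-^*\cap E_+^*$. The bundles $E_\pm^*$ are carried over $\Gamma_\pm$, so their intersection lives over $K=\Gamma_+\cap\Gamma_-$; over $K$ one has $T(SM)=E_0\oplus E_s\oplus E_u$ with $E_s=E_-|_K$, $E_u=E_+|_K$, while $E_-^*$ annihilates $E_-\oplus E_0$ and $E_+^*$ annihilates $E_+\oplus E_0$, so $E_-^*\cap E_+^*$ annihilates all of $T(SM)|_K$ and is the zero section. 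Hence ${\rm WF}(u)\cap T^*SM^\circ=\emptyset$, i.e.\ $u\in C^\infty(SM^\circ;\mc{E})$; since $K\subset SM^\circ$, every boundary point lies in $SM\setminus K$ where $u$ was already shown to be smooth up to $\partial(SM)$, so $u\in C^\infty(SM;\mc{E})$ and $u|_{\partial(SM)}=0$ by continuity.

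I expect the main obstacle to be the identity $u_+=u_-$ together with the wavefront-set cancellation over $K$: this is precisely where the hyperbolicity of the trapped set and the microlocal resolvent analysis of \cite{G14b,DG14} (encapsulated in Proposition \ref{Rpm0}) are indispensable. The Hermitian bundle, unitary connection and skew-Hermitian Higgs field cause no extra difficulty, since Proposition \ref{Rpm0} is already stated in that generality; the skew-Hermiticity of $\Phi$ (making $P$ formally skew-adjoint over $SM$, hence $U(t)$ unitary there) is exactly what underpins the $L^p$ and Sobolev mapping properties used above.
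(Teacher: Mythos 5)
Your proof is correct and follows essentially the same route as the paper's: set $u_\pm = R_\pm(0)f$, use $I_{\nabla,\Phi}f=0$ to show $u_+$ also vanishes on $\partial_-(SM)$, conclude $u_+=u_-$, and then use the wavefront-set containments ${\rm WF}(u_\pm)\subset E_\mp^*$ together with $E_+^*\cap E_-^*=\emptyset$ over $K$ to get smoothness. The only difference is that you re-derive $u_+=u_-$ by propagating the vanishing of $h=u_+-u_-$ along backward flow trajectories inside the full-measure invariant set $SM\setminus(\Gamma_+\cup\Gamma_-)$, whereas the paper simply cites the uniqueness clause of Proposition \ref{Rpm0}; your unpacking is exactly the argument underlying that clause and is, if anything, slightly more careful (the clause as literally stated requires smoothness on $SM\setminus\Gamma_+$ while $u_+$ is a priori smooth only on $SM\setminus\Gamma_-$, and your trajectory argument sidesteps this mismatch by working on the common set $SM\setminus(\Gamma_+\cup\Gamma_-)$).
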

\begin{proof}
Let $u_\pm:=R_\pm(0)f$ so that $Pu_\pm=f$ in $SM$ vanishing on $\pl_{\pm }(SM)$. Since $I_{\nabla,\Phi}f=0$ and $\Gamma_\pm\cap \pl_\pm(SM)$ has measure $0$, we have $u_+|_{\pl_-(SM)}=I_{\nabla,\Phi}f=0$ as an 
$L^1(\pl_-(SM))$ function, thus by the last statement of Proposition \ref{Rpm0} we obtain $u_-=u_+$. By Proposition \ref{Rpm0}, this implies that $u_+=u_-\in C^\infty(SM\setminus K)$ and using \eqref{WFsetupm} together with the fact that $E_-^*\cap E_+^*$ is the zero section over the trapped set $K$  we conclude that 
\[ {\rm WF}(u_+)\subset {\rm WF}(u_+)\cap {\rm WF}(u_-)=\emptyset.\]
This shows that $u:=u_+$ is smooth in $SM$ and  $(\mathbb{X}+\Phi)u=-f$ with $u|_{\pl(SM)}=0$.
\end{proof}

We can now easily prove the main injectivity result for the attenuated ray transform in the boundary case:

\begin{proof}[Proof of Theorem \ref{mainthm_boundary1}]
Suppose $f \in C^{\infty}(SM ; \mc{E})$ has degree $m$ and $I_{\nabla, \Phi} f = 0$. By Corollary \ref{smoothness} there is a unique $u \in C^{\infty}(SM ; \mc{E})$ with $(\mathbb{X} + \Phi) u = -f$ and $u|_{\partial(SM)} = 0$. Theorem \ref{thm:finitedegree_higgs} implies that $u$ has finite degree. Since there are no nontrivial twisted CKTs in the boundary case (Theorem \ref{theorem:ACKT}), Theorem \ref{thm_finitedegree_notwistedckts} implies that $u$ has degree $m-1$.
\end{proof}

\vspace{12pt}

\noindent \ref{sec_regularity}.3. {\bf Scattering operator for $\mathbb{X}+\Phi$.}
In this section, we shall describe the regularity of the solutions $u\in L^1(SM; \mc{E})$ of the transport equation 
\begin{equation}\label{transportP}
(\mathbb{X}+\Phi)u=0  \textrm{ in distribution sense in }SM,\quad u|_{\pl_-(SM)}=\omega
\end{equation}
where $\omega\in C^\infty(\pl_-(SM);\mc{E})$. Clearly, if $\omega$ is supported in $\pl_-(SM)\setminus \Gamma_-$, the solution $u$ is unique and smooth, just as in the non-trapping case, and its support is disjoint from $\Gamma_+\cup \Gamma_-$. This allows to define the \emph{scattering operator} for $P$ 
\begin{equation}\label{CAPhi}
\mc{S}^{\nabla,\Phi}: C_c^\infty(\pl_-(SM)\setminus \Gamma_-; \mc{E})\to C_c^\infty(\pl_+(SM)\setminus \Gamma_+;\mc{E}), \quad 
\mc{S}^{\nabla,\Phi}\omega:= u|_{\pl_+(SM)}.
\end{equation}
 We follow closely the results of section 4.3 in \cite{G14b}, 
in particular Proposition 4.6 of this article.
\begin{Proposition}\label{regPu=0}
Let $(M,g)$ be a negatively curved  manifold with strictly convex boundary, let $\mc{E}$ be a Hermitian bundle with Hermitian connection on $SM$ and let $\Phi$ be a smooth skew-Hermitian potential. 
Then for each $\omega\in C^\infty(\pl_-(SM);\mc{E})$, there is a unique solution $u$ of 
\eqref{transportP} in $L^1(SM;\mc{E})$, which in addition is in $L^p(SM;\mc{E})$ 
for all $p<\infty$ and in $C^\infty(SM\setminus \Gamma_+; \mc{E})$. The map $\omega\mapsto u$ is 
continuous as operator $L^\infty(\pl_-SM;\mc{E})\to L^p(SM;\mc{E})$ for all $p<\infty$. 
Moreover, the operator $\mc{S}^{\nabla,\Phi}$ of \eqref{CAPhi} extends as a unitary operator 
\[\mc{S}^{\nabla,\Phi}: L^2(\pl_-(SM);\mc{E})\to L^2(\pl_+(SM);\mc{E})\] 
and if $\mc{S}^{\nabla,\Phi}\omega\in C^\infty(\pl_+(SM);\mc{E})$ with $\omega\in C^\infty
(\pl_-(SM);\mc{E})$, 
then $u\in C^\infty(SM;\mc{E})$.
\end{Proposition}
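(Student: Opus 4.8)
The plan is to reduce the boundary value problem \eqref{transportP} to the inhomogeneous problem already solved by the resolvent $R_-(0)$ of Proposition \ref{Rpm0}, and then to read off all the stated properties from those of $R_-(0)$ together with the invariance of the Hermitian norm of a solution along the geodesic flow; this is the scheme of \cite[Sec.~4.3, Prop.~4.6]{G14b}, to which I would refer for details. For existence and regularity, first choose an arbitrary smooth extension $w\in C^\infty(SM;\mc{E})$ of $\omega$, set $f:=(\mathbb{X}+\Phi)w\in C^\infty(SM;\mc{E})$, extend $f$ by zero to $SM_e$, and define $u:=w+R_-(0)f$. Since $P=-\mathbb{X}-\Phi$ and $P(R_-(0)f)=f$ in $SM$ in the distribution sense, one gets $(\mathbb{X}+\Phi)u=f-f=0$ in $SM$, while $(R_-(0)f)|_{\pl_-(SM)}=0$ gives $u|_{\pl_-(SM)}=\omega$. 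The regularity is inherited from Proposition \ref{Rpm0}: $f\in L^\infty$ so $R_-(0)f\in L^p(SM_e;\mc{E})$ for all $p<\infty$, and $R_-(0)f\in C^\infty(SM\setminus\Gamma_+;\mc{E})$ with $\mathrm{WF}(R_-(0)f)\cap T^*SM^\circ\subset E_+^*$ by \eqref{WFsetupm}; as $w$ is smooth the same holds for $u$.

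For the continuity $L^\infty(\pl_-SM;\mc{E})\to L^p(SM;\mc{E})$ one cannot use a smooth extension, but one argues directly: for rough $\omega$ the solution $u$ is given almost everywhere by parallel transport of $\omega$ along the backward flow to $\pl_-(SM)$ (well defined off the null set $\Gamma_+$), and the bound $\|u\|_{L^p(SM)}\le C\|\omega\|_{L^\infty}$ follows from the exponential volume decay \eqref{Vt} by exactly the computation that gives the $L^\infty\to L^p$ bound for $R_\pm(0)$ in \cite[Prop.~4.2]{G14b}. For uniqueness, if $u_1,u_2\in L^1(SM;\mc{E})$ both solve \eqref{transportP}, then $v:=u_1-u_2$ is an $L^1$ distributional solution of $(\mathbb{X}+\Phi)v=0$ with $v|_{\pl_-(SM)}=0$; as in the last part of the proof of Proposition \ref{Rpm0}, $v$ is determined on the full-measure set $SM\setminus(\Gamma_+\cup\Gamma_-)$ by parallel transport of its trace on $\pl_-(SM)\setminus\Gamma_-$ along the flow, so $v=0$.

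For unitarity, since $\nabla$ is unitary and $\Phi$ skew-Hermitian, a solution of $(\mathbb{X}+\Phi)u=0$ satisfies $X\langle u,u\rangle_{\mc{E}}=\langle\mathbb{X}u,u\rangle_{\mc{E}}+\langle u,\mathbb{X}u\rangle_{\mc{E}}=-\langle\Phi u,u\rangle_{\mc{E}}-\langle u,\Phi u\rangle_{\mc{E}}=0$, so $|u|_{\mc{E}}^2$ is constant along flow lines. Integrating $|u|_{\mc{E}}^2$ along each flow line from $\pl_-(SM)$ to $\pl_+(SM)$, with respect to the measure $d\mu_\nu$, yields $\|\mc{S}^{\nabla,\Phi}\omega\|^2_{L^2(\pl_+SM)}=\|\omega\|^2_{L^2(\pl_-SM)}$ for $\omega\in C_c^\infty(\pl_-(SM)\setminus\Gamma_-;\mc{E})$. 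Since $\Gamma_\pm\cap\pl(SM)$ has zero $d\mu_\nu$-measure, this subspace is dense in $L^2$, so $\mc{S}^{\nabla,\Phi}$ extends to an isometry onto its closed range; running the same construction from data on $\pl_+(SM)$ using $R_+(0)$ produces an isometry $\mc{S}'$, and by the uniqueness above $\mc{S}'$ is a two-sided inverse of $\mc{S}^{\nabla,\Phi}$ on these dense subspaces, hence $\mc{S}^{\nabla,\Phi}$ is unitary.

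Finally, for the smoothness transfer — which I expect to be the only real difficulty — suppose $\omega$ and $\mc{S}^{\nabla,\Phi}\omega$ are both smooth, and let $u$ be the forward solution above, so $u\in C^\infty(SM\setminus\Gamma_+;\mc{E})$ and $\mathrm{WF}(u)\cap T^*SM^\circ\subset E_+^*$. Applying the $R_+(0)$-construction to the now smooth datum $u|_{\pl_+(SM)}=\mc{S}^{\nabla,\Phi}\omega$ on $\pl_+(SM)$ gives an $L^1$ solution $\tilde u$ of the transport equation with $\tilde u|_{\pl_+(SM)}=u|_{\pl_+(SM)}$, hence, using $\mc{S}'=(\mc{S}^{\nabla,\Phi})^{-1}$, with $\tilde u|_{\pl_-(SM)}=\omega$, and with $\tilde u\in C^\infty(SM\setminus\Gamma_-;\mc{E})$, $\mathrm{WF}(\tilde u)\cap T^*SM^\circ\subset E_-^*$. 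By the uniqueness statement $u=\tilde u$, so $u$ is smooth on $SM\setminus K$ and $\mathrm{WF}(u)\cap T^*SM^\circ\subset E_+^*\cap E_-^*$, which is the zero section over $K$ exactly as in Corollary \ref{smoothness}; therefore $\mathrm{WF}(u)\cap T^*SM^\circ=\emptyset$, i.e.\ $u\in C^\infty(SM^\circ;\mc{E})$, and smoothness up to $\pl M$ follows from strict convexity of the boundary and ODE regularity near $\pl_\pm(SM)$ as in \cite{G14b}. This step runs in parallel with \cite[Sec.~4.3]{G14b} and \cite[Lemma~4.5]{DG14}, the connection and Higgs field entering only through the facts that $P$ remains formally skew-adjoint on $SM$ and that parallel transport is fiberwise isometric.
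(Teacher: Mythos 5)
Your proof is essentially correct and follows the paper's overall strategy: reduce the boundary value problem to the resolvent $R_-(0)$ of Proposition \ref{Rpm0}, establish unitarity of $\mc{S}^{\nabla,\Phi}$ by an $L^2$ pairing/flow-invariance argument, and obtain smoothness from a two-sided (forward and backward) construction.

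The one place where you genuinely diverge from the paper is the final smoothness step, and your route is actually cleaner. The paper proves $u\in C^\infty(SM\setminus K)$ by parallel transport and then invokes the pseudodifferential estimate from \cite[Prop.~6.1]{DG14} (together with propagation of singularities \cite[Prop.~2.5]{DZ13}) to kill the wavefront set near $K$, mimicking \cite[Prop.~4.6]{G14b}. You instead observe that the forward solution $u=w+R_-(0)(Pw)$ has ${\rm WF}(u)\cap T^*SM^\circ\subset E_+^*$ directly from \eqref{WFsetupm}, that the backward solution $\tilde u$ built from $\mc{S}^{\nabla,\Phi}\omega$ via $R_+(0)$ has ${\rm WF}(\tilde u)\cap T^*SM^\circ\subset E_-^*$, and that $u=\tilde u$ by uniqueness; since $E_+^*\cap E_-^*$ is the zero section over $K$, the wavefront set is empty. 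This is exactly the mechanism of Corollary \ref{smoothness}, and it dispenses with the pseudodifferential operators $A_\pm$. The cost is that you lean harder on the $L^1$-uniqueness statement (including its time-reversed version, for data prescribed on $\pl_+(SM)$, which you should state explicitly since Proposition \ref{Rpm0} phrases it only for $\pl_-$). The paper's more laborious route has the advantage of yielding quantitative Sobolev regularity near $K$, which your argument does not track.

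Two smaller points. First, the paper is careful to choose the extension $\tilde u_-$ so that $P\tilde u_-$ is supported away from $\Gamma_+$ in $SM_e^\circ$; you take an arbitrary smooth extension and extend $f=(\mathbb X+\Phi)w$ by zero, which still falls under the hypotheses of Proposition \ref{Rpm0} but creates a conormal singularity at $\pl(SM)$ — this is harmless for the interior wavefront bound, but worth flagging. Second, for the $L^\infty\to L^p$ bound the appeal to the exponential volume decay \eqref{Vt} is unnecessary: since parallel transport is a fiberwise isometry and $\Phi$ is skew-Hermitian, $|u|_{\mc{E}}$ is constant along flow lines, so $\|u\|_{L^\infty(SM)}\leq\|\omega\|_{L^\infty(\pl_-SM)}$ and the $L^p$ bound is immediate from finiteness of ${\rm Vol}(SM)$. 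Likewise, your ``integrate $|u|_\mc E^2$ along flow lines'' step for unitarity is really the change-of-variables identity $S_g^*d\mu_\nu=d\mu_\nu$ (Santal\'o/Liouville), which is what the paper's Stokes computation encodes; it would be worth making this explicit.
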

\begin{proof} Let us first show that $\mc{S}^{\nabla,\Phi}$ extends as a unitary map, we follow the proof of Lemma 3.4 in 
\cite{G14b}. If $\omega_1,\omega_2$ are in $C_c^\infty(\pl_-(SM);\mc{E})$ and $u_1$ and $u_2$ are the 
$C_c^\infty(SM\setminus (\Gamma_-\cup\Gamma_+); \mc{E})$ solutions of $Pu_i=0$ with $u_i|_{\pl_-(SM)}=\omega_i$, then  
\[ \begin{split}
0=&\int_{SM} \cjg Pu_1,u_2\cjd_\mc{E}+\cjg u_1,Pu_2\cjd_\mc{E} d\mu=
-\int_{SM} \cjg \mathbb{X}u_1,u_2\cjd_\mc{E}+\cjg u_1,\mathbb{X}u_2\cjd_\mc{E} d\mu\\
=&- \int_{SM} X(\cjg u_1,u_2\cjd_\mc{E})d\mu=\int_{\pl_-(SM)}\cjg \omega_1,\omega_2\cjd_{\mc{E}}d\mu_{\nu}
-\int_{\pl_+(SM)}\cjg \mc{S}^{\nabla,\Phi}\omega_1,\mc{S}^{\nabla,\Phi}\omega_2\cjd_{\mc{E}}d\mu_{\nu}.
\end{split}\]
This implies that $\mc{S}^{\nabla,\Phi}$ is a unitary operator for the $L^2$ product. The proof of the existence of an 
$u\in L^1(SM;\mc{E})$ solving \eqref{transportP} is very similar to the proof of Proposition 4.6 of \cite{G14b}, thus we just sketch the argument. It suffices to assume that $\omega$ is supported near $\Gamma_-$ 
as the case where $\omega$ has support not intersecting $\Gamma_-$ is standard. Since the trapped set is at positive distance from $\pl(SM)$, we can construct $\tilde{u}_-\in C^\infty(SM_e;\mc{E})$ so that $\tilde{u}_-|_{\pl_-(SM)}=\omega$ and 
${\rm supp}(P\tilde{u}_-)\cap SM_e^\circ \subset SM^\circ \setminus \Gamma_+$. Then we set 
$u=\tilde{u}_--R_-(0)(P\tilde{u}_-)$ which, by Proposition \ref{Rpm0}, is an $L^p(SM_e;\mc{E})$ 
section for all $p<\infty$, smooth outside $\Gamma_+$ and solves \eqref{transportP} in $SM$. The solution is clearly unique since it is determined uniquely by $\omega$ at each point $(x,v)\in SM$ so that $|\ell_-(x,v)|<\infty$ (that is in $SM\setminus \Gamma_+$) which is a set of full measure in $SM$.
By construction and continuity of $R_-(0)$ in Proposition \ref{Rpm0}, the map $\omega\mapsto u$ is bounded as map 
$L^\infty\to L^p$.
 It remains to show that $u$ is in fact smooth if $\mc{S}^{\nabla,\Phi}\omega\in C^\infty(\pl_+(SM);\mc{E})$. The proof follows basically the proof of 2) in \cite[Proposition 4.6]{G14b}. First, it is clear that $u\in C^\infty(SM\setminus K;\mc{E})$ since the solution there is obtained from composition of $\omega$ in forward and backward time by parallel transport; this can be viewed also in terms of propagation of singularities for principal type operators \cite[Prop. 2.5]{DZ13}. Then, to show that $u$ is in any positive Sobolev space $H^s$ near $K$, we argue exactly as in the end of the proof of \cite[Proposition 4.6]{G14b}: 
using that $P\tilde{u}_-$ is smooth and compactly supported in $SM_e^\circ$ and \cite[Prop. 6.1]{DG14}\footnote{In the statement of Proposition 6.1 of \cite{DG14}, the operator is chosen (for notational convenience) to be the flow vector field $X$ and the bundle $\mc{E}$ is the trivial bundle $\mathbb{C}$, but the analysis of the resolvent $R_\pm(\la)=(P-\la)^{-1}$ is done for general bundles and contains the case of operators 
$P=-\mathbb{X}+\Phi$ with $\mathbb{X}$ as in our paper and $\Phi$ any skew-Hermitian potential, therefore all the statements of that Proposition apply to our case as well.},  
there is a pseudo differential operator $A_-$ of order zero which is microsupported in a conic neighborhood of $E_-^*\subset T^*SM$ and elliptic in a neighborhood of $E_-^*$ so that $A_-R_-(0)(P\tilde{u}_-)\in H^s(SM;\mc{E})$ for all $s>0$. As a consequence the wavefront set
of $u$ is disjoint from a conic neighborhood of $E_-^*$. By elliptic regularity, we also deduce that 
${\rm WF}(u)\subset E_+^*\oplus E_-^*$. Next, we use propagation of singularities \cite[Prop. 2.5]{DZ13}: by \cite[Lemma 2.10]{DG14}, the trajectories of the Hamiltonian vector field of the principal symbol $p(\xi)=\xi(X)$ of $X$ contained in the energy level $p^{-1}(0)=E_+^*\oplus E_-^*$ are either contained in $E_+^*$, or converge to $E_-^*$ or reach $T^*_{\pl_-(SM)}SM$ in backward time, we conclude that ${\rm WF}(u)\subset E_+^*$ since we know that $u$ is regular in a neighborhood of 
$E_-^*\cup T^*_{\pl_-(SM)}SM$. Finally, if $\mc{S}^{\nabla,\Phi}(\omega)$ is smooth, we can see as above that $u=\tilde{u}_+-R_+(0)P\tilde{u}_+$ for some $\tilde{u}_+$ that is smooth and supported near $\pl_+(SM)$, $\tilde{u}_+|_{\pl_+(SM)}=\mc{S}^{\nabla,\Phi}\omega$ and $P\tilde{u}_+\in C_c^\infty(SM_e^\circ;\mc{E})$ has support not intersecting $\Gamma_-$. Applying the same reasoning as before with the backward flow, we deduce that there is a pseudo differential operator $A_+$ of order zero which is microsupported in a conic neighborhood of $E_+^*\subset T^*SM$, elliptic near $E_+^*$ and so that $A_+R_+(0)(P\tilde{u}_+)\in H^s(SM;\mc{E})$ for all $s>0$, which shows that $u$ is smooth.
\end{proof}

\section{Parallel transport and gauge equivalent connections}\label{scatteringequivalent}

We will now prove Theorem \ref{mainthm_boundary2}. The proof is  similar to \cite[Theorem 8.1]{PSU2} but we need to be careful about regularity issues when there are trapped geodesics.\\

\emph{Proof of Theorem \ref{mainthm_boundary2}}. 
Let $(\mc{E},\nabla^{\mc{E}})$ be a Hermitian bundle with Hermitian connection.
Any other Hermitian connection on $\mc{E}$ over $M$ can be written as $\nabla^{\mc{E}}+A$ for some skew-Hermitian 
connection $1$-form $A\in C^\infty(M;T^*M\otimes {\rm End}_{\rm sk}(\mc{E}))$. Notice that $A$ can also be viewed as an element in $C^\infty(SM;{\rm End}_{\rm sk}(\mc{E}))$ which has degree $1$ in $v$, by considering $(x,v)\mapsto A(x)(v)$ (or equivalently by  contracting $\pi^*A$ with $X$). 
We consider the bundle $\mc{F}:={\rm End}(\mc{E})$ over $M$, which is Hermitian with Hermitian product $\cjg U,W\cjd:={\rm Tr}(UW^*)$ where the adjoint is taken using the Hermitian product on $\mc{E}$. The bundle $\mc{F}$ has a natural Hermitian connection $\nabla^{\mc{F}}$ given by (recall that $\nabla^{\mc{E}}$ is the connection on $\mc{E}$)
\[ (\nabla_V^{\mc{F}} U)f:=\nabla^{\mc{E}}_V(Uf)-U(\nabla^{\mc{E}}_Vf) \]
where $f\in C^\infty(M;\mc{E})$ and $V$ any vector field on $M$. 
This bundle and connection pull-back to $SM$ via $\pi:SM\to M$, and we keep the same notations for the pull-back.
For a section $U\in C^\infty(M;\mc{F})$, we have $(\nabla^{\mc{F}}U)(x)(v)=(\nabla^{\mc{F}}_{X(x,v)}\pi^*U)$.
For $A\in C^\infty(M;T^*M\otimes {\rm End}_{\rm sk}(\mc{E}))$ a connection $1$-form and $\Phi$ a skew-Hermitian Higgs field, multiplication on the left by $A$ and by $\Phi$ on sections of $\mc{F}$ are skew-Hermitian linear maps with respect to the Hermitian structure on $\mc{F}$.  The connection $U\mapsto \nabla^{\mc{F}}U+AU$ is a Hermitian connection on $\mc{F}$. 
Note that $\pi^*A(X)(x,v)=A(x,v)$ if we identify $A$ with an element of degree $1$ in the fibers. 
By Proposition \ref{regPu=0} applied to the bundle $\mc{F}$, there is $U$ in 
$L^p(SM; \mc{F})$ for all $p<\infty$ and smooth outside $\Gamma_+$, which solves 
$$
(\nabla^{\mc{F}}_X+A +\Phi) U = 0 \text{ in } SM, \quad U|_{\partial_{-}(SM)}=\id.
$$
in the distribution sense in $SM^\circ$. We also notice that $U(x,v)$ is a unitary transformation of $\mc{E}_{(x,v)}$ for all $(x,v)\in SM\setminus \Gamma_+$, since $A+\Phi$ is skew-Hermitian on $\mc{E}$. 
The \emph{scattering data} corresponding to the pair $(A,\Phi)$ in $(M,g)$ is the map
$$
C^{A,\Phi}\in L^2(\pl_+SM; U(\mc{E})), \ \  C^{A,\Phi}:= U|_{\partial_{+}(SM)}
$$
where $U(\mc{E})$ is the unitary group of $\mc{E}$.
Knowing $S_g$ and $C^{A,\Phi}$ implies that one knows how vectors in $\mc{E}$ are parallel transported along maximal geodesics from $\partial M$ to $\partial M$ in the presence $A$ and $\Phi$. Indeed, if 
$f\in C^\infty(SM;\mc{E})$ is equal to $e\in \mc{E}_{y_0}$ at $y_0:=(x_0,v_0)\in\partial_-(SM)$ and 
$\nabla^{\mc{E}}_X f=0$, then one has $Uf(y_0)=e$ and $(\nabla^{\mc{E}}_X+A+\Phi)(Uf)=0$, thus the parallel transport  of $e$ along the geodesic $\cup_{t=0}^{\ell_+(y_0)}\varphi_t(y_0)$  in presence of $A,\Phi$ is given by 
$(Uf)(S_g(y_0))=C^{A,\Phi}f(S_g(y_0))$: this is determined only 
by $C^{A,\Phi}$ as a function of $(A,\Phi)$ since 
$f$ is depending only on $\nabla^{\mc{E}}$ and $X$ but not on $(A,\Phi)$. 

The scattering data has the gauge invariance
$$C^{Q^{-1}(\nabla^{\mc{F}}+ A )Q,Q^{-1} \Phi Q} = C^{A,\Phi} \quad \text{if $Q \in C^{\infty}(M,GL(\mc{E}))$ satisfies $Q|_{\partial M} = \id$}.
$$
It follows that from the knowledge of $C^{A,\Phi}$ one can only expect to recover $\nabla$ and $\Phi$ up to a gauge transformation via $Q$ which satisfies $Q|_{\partial M} = \id$. If $\nabla$ is a Hermitian connection and $\Phi$ is skew-Hermitian, the map $U$ and the scattering relation
$C^{A,\Phi}$ take values in $U(\mc{E})$ and the scattering relation remains unchanged under unitary gauge transformations which are the identity on the boundary.

We want to compare two connections $\nabla$ and $\tilde{\nabla}$. 
Take $A$ to be the skew-Hermitian connection $1$-form so that $\tilde{\nabla}=\nabla+\tilde{A}$. Let $\Phi$ and $\tilde{\Phi}$ be two skew-Hermitian Higgs fields.
We write $\mathbb{X}:=\nabla^{\mc{F}}_X$ as we did on $\mc{E}$.
As above, by Proposition \ref{regPu=0} applied to the bundle $\mc{F}$, there are $U$ and $\tilde{U}$ which are 
in $L^p(SM; \mc{F})$ for all $p<\infty$ and smooth outside $\Gamma_+$, which solve 
\[\begin{gathered} 
\mathbb{X}U+\Phi U=0 ,\quad U|_{\pl_-(SM)}={\rm Id},\\
\mathbb{X}\tilde{U}+\tilde{A}\tilde{U}+\tilde{\Phi} \tilde{U}=0 ,\quad \tilde{U}|_{\pl_-(SM)}={\rm Id},
\end{gathered}\]
and $U(x,v), \tilde{U}(x,v)$ are unitary transformations of 
$\mc{E}_{(x,v)}$ for all $(x,v)\in SM\setminus \Gamma_+$. Thus they are invertible on $SM\setminus \Gamma_+$
and the inverse $\tilde{U}^{-1}\in L^p(SM;\mc{F})$ for any $p$ 
since the matrix components of the inverse in a given local orthonormal basis of $\mc{E}$ 
are sums of products of matrix components of $\tilde{U}$ in the basis.
Now if $\chi_\eps\in C_c^\infty(\pl_-(SM)\setminus \Gamma_-)$ 
tends to $1$ pointwise and is uniformly bounded by $1$, then there is a flow invariant 
smooth function $\hat{\chi}_\eps$ (i.e. $X\hat{\chi}_\eps=0$) 
satisfying $\hat{\chi}_\eps|_{\pl_-(SM)}=\chi_\eps$, tending to $1$ pointwise in $SM$ and with 
$||\hat{\chi}_{\eps}||_{L^\infty}\leq 1$.
Let  $U^{\eps}:=\hat{\chi}_\eps U$ and $\tilde{U}_\eps^{-1}:=\hat{\chi}_\eps \tilde{U}^{-1}$, 
these are in $C_c^\infty(SM\setminus (\Gamma_-\cup \Gamma_+); \mc{F})$ and satisfy
\begin{equation}\label{approximation}
\begin{gathered} 
(\mathbb{X}+\Phi)U_\eps=0 ,\quad U_\eps|_{\pl_-(SM)}=\chi_\eps{\rm Id},\\
\mathbb{X}\tilde{U}_{\eps}^{-1} -\tilde{U}_{\eps}^{-1}\tilde{A}-\tilde{U}_{\eps}^{-1}\tilde{\Phi} =0 ,\quad \tilde{U}_{\eps}^{-1} |_{\pl_-(SM)}=\chi_\eps {\rm Id}.
\end{gathered}
\end{equation}
We have that $U_\eps \to U$ in $L^p$ for all $p<\infty$ and $\tilde{U}_{\eps}^{-1} 
\to \tilde{U}^{-1}$ in $L^p$ for all $p<\infty$ as $\eps\to 0$.
Consider the section $Q:=U\tilde{U}^{-1}$ and $Q_\eps:=U_\eps \tilde{U}_\eps^{-1}$. 
Then $Q_\eps$ is smooth and in $L^p(SM;\mc{E})$ for all $p<\infty$, and $Q_\eps\to Q$ in $L^p$ for all $p<\infty$ (by Lebesgue theorem). Now, using that the scattering data is the same for $(\nabla,\Phi)$ and $(\tilde{\nabla},\tilde{\Phi})$ we get $Q|_{\pl(SM)}={\rm Id}$ and we also have by \eqref{approximation}
\[ \mathbb{X}Q_\eps+\Phi Q_\eps-Q_\eps \tilde{A}-Q_\eps\tilde{\Phi}=0.\]
The equation holds as smooth functions and by pairing this equation with any $Y\in C_c^\infty(SM^\circ;\mc{F})$,
we can let $\eps\to 0$ to deduce that 
\[ \mathbb{X}Q+\Phi Q-Q\tilde{A}-Q\tilde{\Phi}=0, \quad U|_{\pl(SM)}={\rm Id}\]
in the distribution sense in $SM^\circ$. Introducing a new connection 
$\hat{A}R:=-R\tilde{A}$ on $\mc{F}$ and a Higgs field $\hat{\Phi}(R):=\Phi R-R\tilde{\Phi}$, we easily check (like in \cite[Theorem 8.1]{PSU2}) that they are a Hermitian connection and skew-Hermitian Higgs field on the bundle $\mc{F}$. By Proposition \ref{regPu=0} applied to this bundle and pair $(\hat{A},\hat{\Phi})$, 
we deduce that $Q$ is actually smooth in $SM$ since its boundary value at $\pl(SM)$ is smooth. Then the proof can be concluded just as in \cite[Theorem 8.1]{PSU2}: take $W:=Q-{\rm Id}$, then using $\nabla^{\mc{F}}{\rm Id}=0$, this solves 
\[ \hat{\nabla}_XW+\hat{\Phi}(W) = \tilde{A}+\tilde{\Phi}-\Phi, \quad W|_{\pl(SM)}=0 \]
and thus  $I_{\hat{\nabla},\hat{\Phi}}(\tilde{A}+\tilde{\Phi}-\Phi)=0$ in the bundle $\mc{F}$, which implies by Theorem \ref{mainthm_boundary1} that there exists $Z\in C^\infty(M;\mc{F})$ vanishing at $\pl M$ such that 
\[ \mathbb{X}Z-Z\tilde{A}+\Phi Z-Z\tilde{\Phi}=\tilde{A}+\tilde{\Phi}-\Phi\]
and $W=Z$ and $Q=Z+{\rm Id}$ gives the desired gauge equivalence.\qed

\section{Absence of twisted CKTs on closed surfaces} \label{sec_twistedckts_twodim}

In this section we prove Theorem \ref{main_thm_twistedckts_twodim} which gives a condition ensuring the absence of nontrivial twisted CKTs on closed Riemann surfaces. To explain this we recall some notation from \cite{PSU2, PSU1} and \cite[Appendix B]{PSU_hd} that is specific to two dimensions.

If $(M,g)$ is a closed oriented Riemannian surface, there is a global orthonormal frame $\{ X, X_{\perp}, V \}$ of $SM$ equipped with the Sasaki metric, where $X$ is the geodesic vector field, $V$ is the vertical vector field defined for $u \in C^{\infty}(SM)$ by 
$$
Vu(x,v) = \langle \vd u(x,v), iv \rangle
$$
where $iv$ is the rotation of $v$ by $90^{\circ}$ according to the orientation of the surface, and $X_{\perp} = [X, V]$. We define the Guillemin-Kazhdan operators \cite{GK}
$$
\eta_{\pm} = \frac{1}{2}(X \pm i X_{\perp}).
$$
If $x = (x_1, x_2)$ are oriented isothermal coordinates near some point of $M$, we obtain local coordinates $(x, \theta)$ on $SM$ where $\theta$ is the angle between $v$ and $\partial/\partial x_1$. In these coordinates $V = \partial/\partial \theta$ and $\eta_+$ and $\eta_-$ are $\partial$ and $\overline{\partial}$ type operators, see \cite[Appendix B]{PSU_hd}.

For any $m \in \mZ$ we define 
$$
\Lambda_m = \{ u \in C^{\infty}(SM) \,;\, Vu = imu \}.
$$
In the $(x, \theta)$ coordinates elements of $\Lambda_m$ look locally like $h(x) e^{im\theta}$. Spherical harmonics may be further decomposed as 
\begin{gather*}
\Omega_0 = \Lambda_0, \\
\Omega_m = \Lambda_m \oplus \Lambda_{-m} \text{ for } m \geq 1.
\end{gather*}
Any $u \in C^{\infty}(SM)$ has a decomposition $u = \sum_{m=-\infty}^{\infty} u_m$ where $u_m \in \Lambda_m$. The geodesic vector field decomposes as 
$$
X = \eta_+ + \eta_-
$$
where $\eta_{\pm}: \Lambda_m \to \Lambda_{m \pm 1}$.

Let now $\mc{E}$ be a Hermitian bundle of rank $n$ over $M$, and let $\nabla^{\mc{E}}$ be a Hermitian connection on $\mc{E}$. As in Section \ref{sec_pestov}, we denote by $\mc{E}$ and $\nabla^{\mc{E}}$ the pullback bundle over $SM$ and the pullback connection, and we have the operator $\mathbb{X}$ as before. We wish to discuss the analogues of $X_{\perp}$ and $V$. To do this, define the linear operator $G: C^{\infty}(SM ; N \otimes \mc{E}) \to C^{\infty}(SM ; \mc{E})$ by requiring that 
\[
G(Z \otimes u)(x,v) = \langle Z(x,v), iv \rangle u(x,v), \qquad Z \in C^{\infty}(SM ; N), \ \ u \in C^{\infty}(SM ; \mc{E}).
\]
We then define $\mathbb{X}_{\perp}$ and $\mathbb{V}$ acting on $C^{\infty}(SM ; \mc{E})$ by 
\begin{align*}
\mathbb{X}_{\perp} u &:= -G(\hd\,^{\mc{E}} u), \\
\mathbb{V} u &:= G(\vd\,^{\mc{E}} u).
\end{align*}
We also define the twisted Guillemin-Kazhdan operators 
\[
{\bf \mu}_{\pm} := \frac{1}{2}(\mathbb{X}�\pm i \mathbb{X}_{\perp}).
\]
If $U$ is a trivializing neighborhood for $\mc{E}$ and if $(e_1, \ldots, e_n)$ is an orthonormal local frame over $U$, then any $u \in C^{\infty}(SU ; \mc{E})$ is of the form $u = \sum_{k=1}^n u^k e_k$ with $u^k \in C^{\infty}(SU)$ (here we write $e_j$ for $\pi^* e_j$), and $\nabla^{\mc{E}}$ is represented as $d+A$ where $A = (A^k_l)$ is a skew-Hermitian matrix of $1$-forms. Interpreting $1$-forms $a$ as functions on $SM$ by $a(x,v) = a_x(v)$, we have the splitting $A = A_+ + A_-$ where 
\[
(A_{\pm})^k_l = \frac{1}{2}(A^k_l \pm \frac{1}{i} V A^k_l).
\]
Then $A_{\pm}$ is a matrix with entries in $\Lambda_{\pm 1}$, and since $A$ is skew-Hermitian one has $A_{\pm}^* = -A_{\mp}$. One can now check that the above operators have local coordinate representations 
\begin{align*}
\mathbb{X}_{\perp} (\sum_{k=1}^nu^k e_k) &= \sum_{k=1}^n \Big(X_{\perp} u^k - \sum_{l=1}^n(V A^k_l) u^l\Big) e_k, \\
\mathbb{V} (\sum_{k=1}^n u^k e_k) &= \sum_{k=1}^n (Vu^k) e_k, \\
{\bf \mu}_{\pm} \sum_{k=1}^n (u^k e_k) &= \sum_{k=1}^n \Big(\eta_{\pm} u^k + \sum_{l =1}^n(A_{\pm})^k_l u^l\Big ) e_k.
\end{align*}

Setting $\Lambda_m(SM ; \mc{E}) = \{ u \in C^{\infty}(SM ; \mc{E}) \,;\, \mathbb{V} u = im u \}$, any $u \in C^{\infty}(SM ; \mc{E})$ has an $L^2$-orthogonal decomposition 
\[
u = \sum_{m=-\infty}^{\infty}�u_m
\]
where $u_m \in \Lambda_m(SM ; \mc{E})$. The operators ${\bf \mu}_{\pm}$ satisfy ${\bf \mu}_{\pm}: \Lambda_m(SM ; \mc{E}) \to \Lambda_{m \pm 1}(SM ; \mc{E})$, and $\mathbb{X} = {\bf \mu}_+ + {\bf \mu}_-$. The relation to $\mathbb{X}_{\pm}$ is as follows: $\mathbb{X}_+ u_0 = {\bf \mu}_+ u_0 + {\bf \mu}_- u_0$ for $u_0 \in \Lambda_0(SM ; \mc{E})$, and for $m \geq 1$ we have 
\begin{gather*}
\mathbb{X}_+ (u_m + u_{-m}) = {\bf \mu}_+ u_m + {\bf \mu}_- u_{-m}, \\
\mathbb{X}_- (u_m + u_{-m}) ={\bf \mu}_- u_m + {\bf \mu}_+ u_{-m}
\end{gather*}
where $u_j \in \Lambda_j(SM ; \mc{E})$.

Let $\star$ be the Hodge star operator on $(M,g)$. The curvature  $f^{\mc{E}}$ of $\nabla^{\mc{E}}$ is a $2$-form with values in skew-Hermitian endomorphisms of $\mc{E}$. In a trivializing neighborhood $U \subset M$ we may represent $\nabla^{\mc{E}}$ as $d+A$, and then $f^{\mc{E}}$ is represented as $dA + A \wedge A$, an $n \times n$ skew-Hermitian matrix of $2$-forms. Since $d=2$, $i \star f^{\mc{E}}$ is a smooth section on $M$ with values in Hermitian endomorphisms of $\mc{E}$ and thus having real eigenvalues. Denote by $\lambda_1 \leq \ldots \leq \lambda_n$ the eigenvalues of $i \star f^{\mc{E}}$. Since the ordered eigenvalues of a Hermitian matrix are Lipschitz continuous functions of its entries (see e.g.\ \cite[Section 1.3.3]{Tao_randommatrix}), the maps $\lambda_j: M \to \mR$ are Lipschitz continuous.

Finally, we recall the commutator formula on $C^{\infty}(SU ; \C^n)$ (see \cite[Lemma 4.3]{Pa2}),
\[
[\eta_+ + A_+, \eta_- + A_-] = \frac{i}{2} (KV + \star f_A )
\]
where $f_A = dA + A \wedge A$. This implies a corresponding formula on $C^{\infty}(SM ; \mc{E})$:
\begin{equation} \label{bfmu_commutator_formula}
[{\bf \mu}_+, {\bf \mu}_-] =  \frac{i}{2} (K \mathbb{V} + \star f^{\mc{E}} ).
\end{equation}

After these preliminaries, we state the result ensuring absence of nontrivial twisted CKTs (Theorem \ref{main_thm_twistedckts_twodim} is part (c) below). Here $\chi(M)$ is the Euler characteristic of $M$.

\begin{Theorem}
Let $(M,g)$ be a closed oriented Riemannian surface, let $\mc{E}$ be a Hermitian bundle of rank $n$ over $M$, and let $\nabla^{\mc{E}}$ be a unitary connection on $\mc{E}$. Denote by $\lambda_1 \leq \ldots \leq \lambda_n$ the eigenvalues of $i \star f^{\mc{E}}$ counted with multiplicity.
\begin{enumerate}
\item[(a)] 
If $m \in \Z$ and if 
$$
\int_M \lambda_1 \,dV > 2\pi m \chi(M),
$$
then any $u \in \Lambda_m(SM ; \mc{E})$ satisfying ${\bf \mu}_+ u = 0$ must be identically zero.
\item[(b)] 
If $m \in \Z$ and if 
$$
\int_M \lambda_n \,dV < -2\pi m \chi(M),
$$
then any $u \in \Lambda_{-m}(SM ; \mc{E})$ satisfying ${\bf \mu}_- u = 0$ must be identically zero.
\item[(c)] 
If $m \geq 1$ and if 
$$
2\pi m \chi(M) < \int_M \lambda_1 \,dV \ \ \text{ and } \quad \int_M \lambda_n \,dV < -2\pi m \chi(M),
$$
then any $u \in \Omega_m(SM ; \mc{E})$ satisfying $\mathbb{X}_+ u = 0$ must be identically zero.
\end{enumerate}
\end{Theorem}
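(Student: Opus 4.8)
The strategy is to derive parts (a) and (b) from a single energy/integration-by-parts identity based on the commutator formula \eqref{bfmu_commutator_formula}, and then obtain part (c) by reducing the equation $\mathbb{X}_+ u = 0$ to the two first-order equations for the $\mathbb{V}$-homogeneous components of $u$. First I would treat (a). Suppose $u \in \Lambda_m(SM;\mc{E})$ with $\mu_+ u = 0$; the goal is to show $u\equiv 0$. Pairing the commutator identity $[\mu_+,\mu_-] = \frac{i}{2}(K\mathbb{V} + \star f^{\mc{E}})$ against $u$ in $L^2(SM;\mc{E})$ and using $\mu_+ u = 0$ together with $\mu_+^* = -\mu_-$ (which holds because the connection is unitary), one gets
\[
\norm{\mu_- u}^2 = -\tfrac{i}{2}\br{(K\mathbb{V} + \star f^{\mc{E}})u, u} = \tfrac{m}{2}\br{Ku,u} - \tfrac{i}{2}\br{\star f^{\mc{E}} u, u},
\]
using $\mathbb{V}u = imu$. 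Since $i\star f^{\mc{E}}$ is Hermitian with least eigenvalue $\lambda_1$, we have $-\tfrac{i}{2}\br{\star f^{\mc{E}} u,u} \geq \tfrac12 \int_{SM}\lambda_1 |u|^2$; and $\tfrac{m}{2}\br{Ku,u} = \tfrac{m}{2}\int_{SM} K|u|^2$. Integrating first over the fibers $S_xM$ (on which $|u|^2$ need not be constant, so one must be a little careful — but $K$ and $\lambda_1$ depend only on $x$) one obtains $0 \geq \norm{\mu_- u}^2 \geq \tfrac12\int_{SM}(mK + \lambda_1)|u|^2$.

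The key point is then a Gauss–Bonnet argument: if $u\neq 0$ its fiberwise $L^2$ density is positive on a set of positive measure, and the hypothesis $\int_M \lambda_1\,dV > 2\pi m\chi(M) = m\int_M K\,dV$ says $\int_M(mK + \lambda_1)\,dV > 0$. Here I expect the main obstacle: the sign of $mK + \lambda_1$ is not pointwise controlled, only its integral, so one cannot immediately conclude. The fix is to choose a conformal factor: by the conformal invariance established at the end of Section \ref{sec_pestov} and the conformal invariance of the hypothesis (noted after Theorem \ref{main_thm_twistedckts_twodim}), we may replace $g$ by $\tilde g = e^{2\varphi}g$ without changing the kernel of $\mu_+$ on $\Lambda_m$ nor the sign of $\int_M \lambda_1\,dV - 2\pi m\chi(M)$. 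Under $\tilde g$ one has $\tilde K = e^{-2\varphi}(K - \Delta_g\varphi)$ and $\tilde\lambda_1 = e^{-2\varphi}\lambda_1$ (since $\star f^{\mc{E}}$ scales by $e^{-2\varphi}$ and the volume form by $e^{2\varphi}$, consistent with $\int\tilde\lambda_1\,d\tilde V = \int\lambda_1\,dV$). Solving the linear PDE $\Delta_g\varphi = K - c$ where $c = \frac{2\pi\chi(M)}{\mathrm{Vol}_g(M)}$ makes $\tilde K$ a negative constant multiple... more precisely one wants $m\tilde K + \tilde\lambda_1 > 0$ pointwise; since the obstruction is only integral, one chooses $\varphi$ so that $m(K - \Delta_g\varphi) + \lambda_1 = e^{2\varphi}h$ for a strictly positive function $h$, which amounts to prescribing $\Delta_g\varphi = K + \frac{1}{m}\lambda_1 - \frac{1}{m}e^{2\varphi}h$; solvability of this semilinear equation (of Kazdan–Warner / prescribed curvature type) under the integral sign condition is the technical heart, and is exactly the mechanism already used in \cite{DS, PSU_hd}. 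With such $\tilde g$ the identity forces $\int_{SM}\tilde h\,|u|^2 = 0$, hence $u\equiv 0$.

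Part (b) is entirely symmetric: for $u\in\Lambda_{-m}(SM;\mc{E})$ with $\mu_- u = 0$, pair the same commutator formula (or its adjoint) against $u$, now getting $\norm{\mu_+ u}^2 = -\tfrac{m}{2}\br{Ku,u} + \tfrac{i}{2}\br{\star f^{\mc{E}}u,u} \leq \tfrac12\int_{SM}(-mK - \lambda_n)|u|^2$ using that $-i\star f^{\mc{E}}$ has largest eigenvalue $-\lambda_n$; the hypothesis $\int_M\lambda_n\,dV < -2\pi m\chi(M)$ gives $\int_M(-mK-\lambda_n)\,dV > 0$, and the same conformal-rescaling trick yields $u\equiv 0$. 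Finally, for part (c): let $u\in\Omega_m(SM;\mc{E})$, $m\geq 1$, with $\mathbb{X}_+ u = 0$. Write $u = u_m + u_{-m}$ with $u_{\pm m}\in\Lambda_{\pm m}(SM;\mc{E})$. By the relations displayed just before the theorem, $\mathbb{X}_+(u_m + u_{-m}) = \mu_+ u_m + \mu_- u_{-m}$, and since $\mu_+ u_m \in \Lambda_{m+1}(SM;\mc{E})$ while $\mu_- u_{-m}\in\Lambda_{-m-1}(SM;\mc{E})$ lie in different $\mathbb{V}$-eigenspaces, $\mathbb{X}_+ u = 0$ forces $\mu_+ u_m = 0$ \emph{and} $\mu_- u_{-m} = 0$ separately. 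Now apply part (a) to $u_m$ and part (b) to $u_{-m}$: under the stated two-sided hypothesis both vanish, so $u\equiv 0$. I expect the fiber-integration bookkeeping in the pairing and, more seriously, the solvability of the auxiliary prescribed-curvature PDE to be the only real work; everything else is formal manipulation with the operators $\mu_\pm$, $\mathbb{V}$ and the commutator \eqref{bfmu_commutator_formula}.
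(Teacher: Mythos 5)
Your plan takes a route that is superficially different from the paper's but runs into several concrete problems. The paper proves (a) by a Carleman estimate: it conjugates, writing $P = e^{-\varphi}\circ\mu_+\circ e^{\varphi}$, uses $\|Pw\|^2 = \|P^*w\|^2 + ([P^*,P]w,w)$, computes $[P^*,P] = \tfrac12(-\Delta_g\varphi - mK + i\star f^{\mc{E}})$ on $\Lambda_m$, and chooses $\varphi$ by solving a \emph{linear} Poisson equation $-\Delta_g\varphi = mK+f$ with $f+\lambda_1>0$ and $\int_M f\,dV = -2\pi m\chi(M)$. Your plan is to pair the commutator directly against $u$ (without conjugating) and then to restore pointwise positivity by a conformal change of metric. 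For $m\neq 0$ these two devices are essentially reparametrizations of each other ($\varphi_{\mathrm{Carleman}} = -m\,\psi_{\mathrm{conformal}}$), but the Carleman route is cleaner because nothing needs to be said about how $\Lambda_m$, $\mu_\pm$, the Liouville measure, etc.\ transform conformally, and — importantly — it also covers $m=0$, where your conformal change is powerless: the term $m\Delta_g\psi$ simply drops out and cannot make $\lambda_1>0$ pointwise.

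There are also outright sign errors that break the written argument. With $\mu_+^*=-\mu_-$ and $\mu_+ u = 0$ one has $([\mu_+,\mu_-]u,u) = -\|\mu_-u\|^2$, so $\|\mu_- u\|^2 = \tfrac{m}{2}(Ku,u) - \tfrac12(i\star f^{\mc{E}}u,u)$, and since $i\star f^{\mc{E}}$ has smallest eigenvalue $\lambda_1$ the correct inequality is $-\tfrac12(i\star f^{\mc{E}}u,u) \leq -\tfrac12\int\lambda_1|u|^2$, giving
\[
0 \leq \|\mu_-u\|^2 \leq \tfrac12 \int_{SM}(mK-\lambda_1)\,|u|^2,
\]
with the quantity $\lambda_1 - mK$, \emph{not} $\lambda_1 + mK$, governing positivity. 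Your ``$0\geq\|\mu_-u\|^2 \geq \tfrac12\int(mK+\lambda_1)|u|^2$'' is wrong on both counts (a norm squared is never $\leq 0$, and the sign of the curvature term is reversed), and likewise the hypothesis $\int\lambda_1\,dV > 2\pi m\chi(M)$ translates to $\int(\lambda_1 - mK)\,dV>0$, not $\int(mK+\lambda_1)\,dV>0$. Similar sign flips occur in part (b) (you get $-\tfrac{m}{2}(Ku,u)$ where $+\tfrac{m}{2}$ is correct, and the bound by the top eigenvalue also has the wrong sign). Finally, the auxiliary PDE is not the semilinear Kazdan--Warner equation you describe: once the signs are fixed the condition to arrange is $\lambda_1 - mK + m\Delta_g\psi > 0$, which is achieved exactly as in the paper by solving the \emph{linear} Poisson equation $m\Delta_g\psi = -(mK+f)$ with $f$ smooth, $f+\lambda_1>0$, and $\int f = -2\pi m\chi(M)$ — no nonlinear prescribed-curvature analysis is needed. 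Your reduction of (c) to (a) and (b) via the decomposition $u=u_m+u_{-m}$ and separation of $\mathbb{V}$-degrees is correct and is the same as the paper's.
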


A few remarks are in order:
\begin{enumerate}
\item[1.] 
The condition for $\lambda_1$ is sharp: the work \cite{Pa2} furnishes examples of connections that admit nontrivial twisted CKTs with $m=1$ and satisfy $\lambda_1=K$, so that $\int_M \lambda_1 \,dV = 2\pi \chi(M)$ by the Gauss-Bonnet theorem.
\item[2.]
The condition for $\lambda_1$ is conformally invariant: if $c$ is a positive function, then $\star_{cg} f^{\mc{E}} = c^{-1} \star_g f^{\mc{E}}$ and so $\lambda_{1,cg} \,dV_{cg} = \lambda_1 \,dV_g$.
\item[3.] 
If $\mc{E} = M \times \C$ is the trivial line bundle and $\nabla^{\mc{E}}$ is any Hermitian connection, then $\nabla^{\mc{E}} = d + A$ for some purely imaginary scalar $1$-form $A$, and $\lambda_1 = i \star f^{\mc{E}} = i \star dA$ and 
\[
\int_M \lambda_1 \,dV = i \int_M (\star dA) \,dV = i \int_M \star \star dA = i \int_M dA = 0.
\]
In particular, if $M$ has genus $\geq 2$ and if $\nabla^{\mc{E}}$ is any Hermitian connection on the trivial line bundle, then $\mathbb{X}_+$ has trivial kernel on $\Omega_m$ for all $m \geq 1$.
\end{enumerate}

\begin{proof}
We only prove (a), since (b) is analogous and (c) follows by combining (a) and (b). Given the condition on $\lambda_1$, we will prove a Carleman estimate 
$$
\norm{e^{-\varphi} w} \leq C \norm{e^{-\varphi} {\bf \mu}_+ w}, \qquad w \in \Lambda_m(SM ; \mc{E}),
$$
where $\norm{\,\cdot\,}$ is the norm on $L^2(SM ; \mc{E})$ and $\varphi$ is a Carleman weight, that is, a suitable real valued function in $C^{\infty}(M)$ such that the $L^2$ norm of $e^{-\varphi} w$ can be controlled by the $L^2$ norm of $e^{-\varphi} {\bf \mu}_+ w$. If $u$ satisfies ${\bf \mu}_+ u = 0$, taking $w=u$ in this estimate gives $u=0$ as required.

To prove the Carleman estimate, let $\varphi \in C^{\infty}(M)$ be real valued and consider the conjugated operator 
$$
P = e^{-\varphi} \circ {\bf \mu}_+ \circ e^{\varphi} = {\bf \mu}_+ + (\eta_+ \varphi).
$$
Here, we write $\eta_+ \varphi$ instead of $(\eta_+ \varphi) \mathrm{Id}$ etc. The $L^2$ adjoint of $P$ is 
$P^* = -{\bf \mu}_- + (\eta_- \varphi)$, and integration by parts (where $(\,\cdot\,,\,\cdot\,)$ is the $L^2(SM ; \mc{E})$ inner product) yields 
\begin{equation} \label{p_padjoint_identity}
\norm{Pw}^2 = \norm{P^* w}^2 + ([P^*, P]w, w).
\end{equation}
The commutator is given by 
$$
[P^*, P]w = [{\bf \mu}_+, {\bf \mu}_-]w - (\eta_+ \eta_- \varphi + \eta_- \eta_+ \varphi) w .
$$
Using the formula \eqref{bfmu_commutator_formula} and the fact that $(\eta_+ \eta_- + \eta_- \eta_+) \varphi = \frac{1}{2} \Delta_g \varphi$ since $\varphi \in C^{\infty}(M)$, where $\Delta_g$ is the (negative) Laplace-Beltrami operator on $(M,g)$, it follows that 
$$
[P^*, P]w = \frac{1}{2} (-\Delta_g \varphi - mK + i \star f^{\mc{E}}) w, \qquad w \in \Lambda_m.
$$
Suppose that we can find $\varphi \in C^{\infty}(M)$ such that for some constant $c > 0$, 
\begin{equation} \label{deltavarphi_fa_condition}
-\Delta_g \varphi - mK + i \star f^{\mc{E}} \geq c\, \mathrm{Id} \quad \text{on }M
\end{equation}
as positive definite endomorphisms. Then the commutator term in \eqref{p_padjoint_identity} is positive and satisfies  $([P^*, P]w, w) \geq \frac{c}{2} \norm{w}^2$, so it follows that 
$$
\frac{c}{2} \norm{w}^2 \leq \norm{e^{-\varphi} \mu_+ (e^{\varphi} w)}^2, \qquad w \in \Lambda_m.
$$
This gives the desired Carleman estimate upon replacing $w$ by $e^{-\varphi} w$.

It remains to find $\varphi$ with the property \eqref{deltavarphi_fa_condition}. To do this, we choose a real valued function $f \in C^{\infty}(M)$ satisfying the following two conditions: 
\begin{gather*}
f + \lambda_1 > 0 \ \text{on }M, \\
\int_M f \,dV = -2\pi m \chi(M).
\end{gather*}
If $f$ satisfies these, then $\int_M (mK + f) \,dV = 0$ by the Gauss-Bonnet theorem and thus there exists a solution $\varphi$ of the equation 
$$
-\Delta_g \varphi = mK + f \quad \text{in }M.
$$
This $\varphi$ will satisfy \eqref{deltavarphi_fa_condition} because $-\Delta_g \varphi - mK + i \star f^{\mc{E}} \geq f + \lambda_1 \geq c > 0$ on $M$.

To find $f$, we use the assumption on $\lambda_1$ and define $\eps > 0$ by 
$$
\eps = \frac{1}{\text{Vol}(M)} \left[ \int_M \lambda_1 \,dV - 2\pi m \chi(M) \right].
$$
Since $\lambda_1 \in C(M)$, we can choose $h \in C^{\infty}(M)$ with 
$$
\norm{h-\lambda_1}_{L^{\infty}(M)} \leq \frac{\eps}{4}.
$$
We then define $f$ as $f = -h + \eps_0$, where $\eps_0$ is the constant determined by 
$$
\int_M f \,dV = -2\pi m \chi(M).
$$
Thus we have 
\begin{align*}
\eps_0 &= \frac{1}{\text{Vol}(M)} \left[ \int_M h \,dV - 2\pi m \chi(M) \right] \\
 &= \frac{1}{\text{Vol}(M)} \left[ \int_M \lambda_1 \,dV + \int_M (h-\lambda_1) \,dV - 2\pi m \chi(M) \right] \\
 &\geq \eps - \norm{h-\lambda_1}_{L^{\infty}(M)} \geq \frac{3\eps}{4}.
\end{align*}
It follows that $f + \lambda_1 = \lambda_1 - h + \eps_0 \geq \frac{3\eps}{4} - \frac{\eps}{4} = \frac{\eps}{2}$, so $f$ satisfies the two required conditions. This concludes the proof.
\end{proof}

\section{Transparent pairs} \label{sec_transparent_pairs}

In this final section we consider the problem of when the parallel transport associated with a pair
$(\nabla^{\mc{E}},\Phi)$ determines the pair up to gauge equivalence in the case of closed manifolds.
This problem is discussed in detail in \cite{Pa2,Pa3,P1,P2}, but the results are mostly for
$d=2$.

Since there is no boundary, we need to consider the parallel transport of a pair along closed geodesics.
We shall consider a simplified version of the problem, which is interesting in its own right.
The bundle $\mathcal E$ will be trivial (hence $\nabla^{\mc{E}}=d+A$) and we will attempt to understand those pairs
$(A,\Phi)$ with the property that the parallel transport along closed geodesics is the identity.
These pairs will be called {\it transparent} as they are invisible from the point of view of the closed geodesics of the Riemannian metric.

Let $(M,g)$ be a closed Riemannian manifold, $A$ a unitary connection and $\Phi$ a skew-Hermitian Higgs field.
The pair $(A,\Phi)$ naturally induces a cocycle over
the geodesic flow $\varphi_t$ of the metric $g$ acting on the unit sphere bundle
$SM$ with projection $\pi:SM\to M$. The cocycle takes values in the group $U(n)$ and
is defined as follows: let
$C:SM\times \re\to U(n)$ be determined by
\[\frac{d}{dt}C(x,v,t)=-(A(\varphi_{t}(x,v))+\Phi(\pi\circ\varphi_{t}(x,v)))C(x,v,t),\;\;\;\;\;C(x,v,0)=\mbox{\rm Id}.\]
The function $C$ is a {\it cocycle}: 
\[C(x,v,t+s)=C(\varphi_{t}(x,v),s)\,C(x,v,t)\]
for all $(x,v)\in SM$ and $s,t\in\re$. The cocycle $C$ is said to be {\it cohomologically trivial}
if there exists a smooth function $u:SM\to U(n)$ such that
\[C(x,v,t)=u(\varphi_{t}(x,v))u^{-1}(x,v)\]
for all $(x,v)\in SM$ and $t\in\re$. We call $u$ a trivializing function and note that two trivializing functions $u_{1}$ and $u_{2}$ (for the same cocycle) are related by $u_{2}w=u_{1}$
where $w:SM\to U(n)$ is constant along the orbits of the geodesic flow. In particular, if $\varphi_{t}$ is transitive (i.e. there is a dense orbit) there is a unique trivializing function up to
right multiplication by a constant matrix in $U(n)$.

\begin{Definition} {\rm We will say that a pair $(A,\Phi)$ is cohomologically trivial if $C$ is cohomologically trivial.
The pair $(A,\Phi)$
is said to be {\it transparent} if $C(x,v,T)=\mbox{\rm Id}$ every time
that $\varphi_{T}(x,v)=(x,v)$. }
\end{Definition}

Observe that the gauge group given by the set of smooth maps
$r:M\to U(n)$ acts on pairs as follows:
\[(A,\Phi)\mapsto (r^{-1}dr+r^{-1}Ar,r^{-1}\Phi r).\]
This action leaves invariant the set of cohomologically trivial pairs: indeed,
if $u$ trivializes the cocycle $C$ of a pair $(A,\Phi)$, then it is easy to check that $r^{-1}u$ trivializes the cocycle of the pair $(r^{-1}dr+r^{-1}Ar,r^{-1}\Phi r)$.

Obviously a cohomologically trivial pair is transparent.
There is one important situation in which both notions agree. If $\varphi_t$ is
Anosov, then the Livsic theorem \cite{L1,L2} together with the regularity results in \cite{NT}
imply that a transparent pair is also cohomologically trivial. 
We already pointed out that the  Anosov property is satisfied, if for example $(M,g)$ has negative curvature.

Given a cohomologically trivial pair $(A,\Phi)$, a trivializing function $u$ satisfies
\begin{equation}
(X+A+\Phi)u=0.
\label{eq:transparent}
\end{equation}
If we assume now that $(M,g)$ is negatively curved and there are no nontrivial CKTs, then Theorem \ref{main_thm_closed_finitedegree} implies that
$u=u_0$. If we split equation (\ref{eq:transparent}) in degrees zero and one we obtain $\Phi u_0=0$ and
$du+Au=0$. Equivalently, $\Phi=0$ and $A$ is gauge equivalent to the trivial connection.
Hence we have proved

\begin{Theorem} \label{main_thm_transparent_pairs}
Let $(M,g)$ be a closed negatively curved manifold and $(A,\Phi)$ a transparent pair.
If there are no nontrivial twisted CKTs, then $A$ is gauge equivalent to the trivial connection and
$\Phi=0$.
\label{thm:transparentpairs}
\end{Theorem}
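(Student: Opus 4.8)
The statement collects an argument already outlined in the paragraph preceding it, so the plan is simply to spell out its three steps. First I would pass from transparency to a transport equation. Since $(M,g)$ is closed and negatively curved, the geodesic flow $\varphi_t$ on $SM$ is Anosov \cite{KH}, and the pair $(A,\Phi)$ induces a $U(n)$-valued cocycle $C$ over $\varphi_t$ which, by transparency, equals the identity over every periodic orbit. By the Livsic theorem \cite{L1,L2} together with the regularity result of \cite{NT}, $C$ is cohomologically trivial with a \emph{smooth} trivializing function $u\colon SM\to U(n)$, i.e.\ $C(x,v,t)=u(\varphi_t(x,v))u(x,v)^{-1}$. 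Differentiating in $t$ at $t=0$ and comparing with the defining ODE for $C$ yields $(Xu)u^{-1}=-(A+\Phi)$, that is $(\mathbb{X}+\Phi)u=0$ on $SM$, where $\mathbb{X}=\nabla_X$ for the connection $\nabla=d+A$ on the trivial bundle $M\times\C^n$ and $A,\Phi$ act by left multiplication on the matrix-valued function $u$.

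Next I would show that $u$ has degree $0$. Working column by column, each column $u^j\in C^\infty(SM;\C^n)$ is a smooth section over $SM$ of the trivial bundle $M\times\C^n$ carrying the connection $d+A$, and satisfies $(\mathbb{X}+\Phi)u^j=0$; here the right-hand side is $f=0$, which is of finite degree. Since $(M,g)$ is negatively curved and there are no nontrivial twisted CKTs for $\nabla=d+A$, Theorem \ref{main_thm_closed_finitedegree} applies and forces each $u^j$, hence $u$, to have degree $0$. Thus $u$ is independent of the fibre variable and we may write $u=u_0$ with $u_0\colon M\to U(n)$. (Alternatively one could apply Theorem \ref{main_thm_closed_finitedegree} directly to the bundle $\mc{F}=\mathrm{End}(\mc{E})$ with the unitary connection and skew-Hermitian Higgs field induced by left multiplication by $A$ and $\Phi$, just as in Section \ref{scatteringequivalent}.)

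Finally I would decompose $(X+A+\Phi)u_0=0$ into spherical harmonics in $v$. As $u_0$ depends only on $x$, the terms $Xu_0$ and $Au_0$ are of degree $1$ in $v$ (they are the $1$-forms $du_0$ and $Au_0$ on $M$), whereas $\Phi u_0$ is of degree $0$. Reading off the degree-$0$ component gives $\Phi u_0=0$, and since $u_0$ is $U(n)$-valued hence invertible, $\Phi=0$; reading off the degree-$1$ component gives $du_0+Au_0=0$, i.e.\ $A=-(du_0)u_0^{-1}$. Taking the gauge transformation $r:=u_0$ we obtain $r^{-1}dr+r^{-1}Ar=u_0^{-1}du_0-u_0^{-1}du_0=0$, so $A$ is gauge equivalent to the trivial connection and $\Phi=0$, as claimed.

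The only nontrivial ingredient is the first step: converting ``identity parallel transport over every closed geodesic'' into a genuine smooth solution of a transport equation on $SM$ relies essentially on the Anosov property and on Livsic-type rigidity with smoothness of the transfer function (\cite{L1,L2,NT}). Steps 2 and 3 are, respectively, a direct invocation of the finite-degree result of Theorem \ref{main_thm_closed_finitedegree} and an elementary bookkeeping of Fourier modes of order at most one.
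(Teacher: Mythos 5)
Your proposal follows the paper's own argument exactly: Livsic plus the regularity result of \cite{NT} upgrade transparency to a smooth solution of $(\mathbb{X}+\Phi)u=0$, Theorem~\ref{main_thm_closed_finitedegree} (using negative curvature and absence of twisted CKTs) forces $u$ to have degree $0$, and reading off the degree-$0$ and degree-$1$ Fourier modes yields $\Phi=0$ and $A=-(du_0)u_0^{-1}$, hence gauge triviality. The only difference is that you are slightly more explicit than the paper about which Hermitian bundle to feed into Theorem~\ref{main_thm_closed_finitedegree} (column-by-column on $M\times\C^n$, or equivalently on $\mathrm{End}(\mc{E})$), which is a harmless clarification.
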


In analogy with Theorem \ref{mainthm_boundary2} we could also consider two pairs $(A,\Phi)$ and $(B,\Psi)$ and 
a theorem in this direction is also possible along the lines of \cite[Section 6]{P2}. However in order to shorten the exposition, we will not discuss this case here.

\bibliographystyle{alpha}

\end{document}